\newtheorem{theorem}{Theorem}[section]
\newtheorem{lemma}[theorem]{Lemma}
\newtheorem{proposition}[theorem]{Proposition}
\newtheorem{corollary}[theorem]{Corollary}
\newtheorem{application}[theorem]{Application}
\newtheorem{predefinition}[theorem]{Definition}
\newenvironment{definition}{\begin{predefinition}\rm}{\end{predefinition}}
\newtheorem{preremark}[theorem]{Remark}
\newenvironment{remark}{\begin{preremark}\rm}{\end{preremark}}
\newtheorem{prenotation}[theorem]{Notation}
\newtheorem{preexample}[theorem]{Example}
\newenvironment{example}{\begin{preexample}\rm}{\end{preexample}}
\newtheorem{preclaim}[theorem]{Claim}
\newtheorem{prequestion}[theorem]{Question}
\newenvironment{question}{\begin{prequestion}\rm}{\end{prequestion}}
\title{Ordinary and almost ordinary Prym varieties}
\date{}
\author{Ekin Ozman}
\address{Bogazici University, Faculty of Arts and Sciences, Bebek, Istanbul, 34342, Turkey}
\email{ekin.ozman@boun.edu.tr}
\author{Rachel Pries}
\address{Department of Mathematics, 
Colorado State University, 
Fort Collins, CO 80523, USA}
\email{pries@math.colostate.edu}
\thanks{The first author was partially supported by AWM-NSF Mentoring Travel Grant 2013, TUBITAK 2232 fellowship 114C126 
and Bogazici University Research Grant 15B06SUP3.  
The second author was partially supported by grants NSF DMS-15-02227 and NSA 131011. 
We would like to thank
Achter, Bruin, Casalaina-Martin, Farkas, and Grushevsky 
for good conversations, and the referees for insightful comments.}
\begin{document}

\begin{abstract}
We study the $p$-rank stratification of the moduli space of Prym varieties in characteristic $p > 0$.  
For arbitrary primes $p$ and $\ell$ with $\ell \not = p$ and integers $g \geq 3$ and $0 \leq f \leq g$, 
the first theorem generalizes a result of Nakajima by proving that the Prym varieties of all the unramified ${\mathbb Z}/\ell$-covers 
of a generic curve $X$ of genus $g$ and $p$-rank $f$ are ordinary.
Furthermore, when $p \geq 5$ and $\ell = 2$, 
the second theorem implies that there exists a curve of genus $g$ and $p$-rank $f$ 
having an unramified double cover whose Prym has $p$-rank $f'$ for each $\frac{g}{2}-1 \leq f' \leq g-2$; 
(these Pryms are not ordinary).
Using work of Raynaud, we use these two theorems to prove results about the (non)-intersection of the $\ell$-torsion group scheme
with the theta divisor of the Jacobian of a generic curve $X$ of genus $g$ and $p$-rank $f$.

Keywords: Prym, curve, abelian variety, Jacobian, $p$-rank, theta divisor, torsion point, moduli space.\\
MSC: primary: 11G10, 14H10, 14H30, 14H40, 14K25\\
secondary: 11G20, 11M38, 14H42, 14K10, 14K15.
\end{abstract}

\maketitle

%%%%%%%%%%%%%%%%%%%%%%%%%%%%%%%%%%%%%%%%%%%%%%%%%%%%%%%%%%%%%%%%%%%%%%%%%%%%%%%%
\section{Introduction} %%%%%%%%%%%%%%%%%%%%%%%%%%%%%%%%%%%%%%%%
%%%%%%%%%%%%%%%%%%%%%%%%%%%%%%%%%%%%%%%%%%%%%%%%%%%%%%%%%%%%%%%%%%%%%%%%%%%%%%%%

Suppose $X$ is a smooth projective connected curve of genus $g \geq 2$ 
defined over an algebraically closed field $k$ of characteristic $p > 0$.
Suppose $\pi: Y \to X$ is an unramified cyclic cover of degree $\ell$ for some prime $\ell \not = p$.
Then $Y$ has genus $g_Y=\ell(g-1)+1$ by the Riemann-Hurwitz formula.
For each of the $\ell^{2g}-1$ unramified ${\mathbb Z}/\ell$-covers $\pi:Y \to X$, 
the Jacobian $J_Y$ is isogenous to $J_X \oplus P_{\pi}$ for an abelian variety 
$P_{\pi}$ of dimension $(\ell-1)(g-1)$, called the {\it Prym variety} of $\pi$.
In particular, when $\ell=2$ and $\pi: Y \to X$ is an unramified double cover, 
then $Y$ has genus $2g-1$ and $P_\pi$ is a principally polarized abelian variety of dimension $g-1$.

In this paper, we study the relationship between the $p$-ranks of $J_X$ and $P_{\pi}$.  
The $p$-rank $f_A$ of an abelian variety $A/k$ of dimension $g_A$
is the integer $0 \leq f_A \leq g_A$ such that the number of $p$-torsion points in $A(k)$ is $p^{f_A}$.
One says that $A$ is {\it ordinary} if its $p$-rank is as large as possible ($f_A=g_A$) and 
is {\it almost ordinary} if its $p$-rank equals $g_A-1$.

Consider the moduli space ${\mathcal M}_g$ whose points represent smooth curves $X$ of genus $g$ 
and the moduli space ${\mathcal R}_{g,\ell}$ 
whose points represent unramified ${\mathbb Z}/\ell$-covers $\pi:Y \rightarrow X$. 
There is a finite flat morphism of degree $\ell^{2g}-1$, denoted
\[\Pi_\ell: {\mathcal R}_{g, \ell} \to {\mathcal M}_g,\] which takes the point representing a cover $\pi:Y \to X$ 
to the point representing the curve $X$ \cite[Page 6]{donagismith81}.
For $0 \leq f \leq g$,
let ${\mathcal M}_g^f$ denote the $p$-rank $f$ stratum of ${\mathcal M}_g$.
For most $g$ and $f$, it is not known whether ${\mathcal M}_g^f$ is irreducible; 
however, every component of ${\mathcal M}_g^f$ has dimension $2g-3+f$ \cite[Theorem 2.3]{FVdG}.

By a result of Nakajima, the Prym varieties of the unramified ${\mathbb Z}/\ell$-covers of the generic curve 
$X/k$ of genus $g \geq 2$ are ordinary \cite[Theorem 2]{nakajima}.
In other words, the cover represented by the generic point of ${\mathcal R}_{g,\ell}$ has an ordinary Prym.

The first theorem in the paper generalizes Nakajima's result by adding a condition on the $p$-rank of $X$.
Specifically, if $X$ is a generic $k$-curve of genus $g$ and $p$-rank $f$, then Theorem \ref{Tintro1}
implies that the Prym varieties of all of the unramified ${\mathbb Z}/\ell$-covers of $X$ are ordinary.

Raynaud used the theta divisor $\Theta_X$ in $J_X$ to study the $p$-rank of Prym varieties \cite{raynaudsections, raynaudrevetements, raynaud02}.
(See Section \ref{Stheta} for the definition of $\Theta_X$.)
Using Raynaud's work, 
our theorems yield new results about the (non)-existence of points of order $\ell$ contained in $\Theta_X$.

\begin{theorem} \label{Tintro1} 
Let $\ell \not = p$ be prime.
Let $g \geq 2$ and $0 \leq f \leq g$ with $f \not = 0$ if $g=2$.
Let $S$ be an irreducible component of ${\mathcal M}_g^f$.
\begin{enumerate}
\item (See Theorem \ref{TW}) Then $\Pi^{-1}_\ell(S)$ is irreducible (of dimension $2g-3+f$) 
and the cover represented by the generic point of $\Pi^{-1}_\ell(S)$ has an ordinary Prym.  
\item (See Theorem \ref{Ttheta1}) If $X$ is the curve represented by the generic point of $S$, 
then the theta divisor $\Theta_X$ of the Jacobian of $X$ does not contain any point of order $\ell$.
\end{enumerate}
\end{theorem}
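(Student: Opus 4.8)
The plan is to prove part (1) and then deduce part (2) from it via Raynaud's analysis of theta divisors. For part (2), observe first that once part (1) is known the $\ell^{2g}-1$ unramified $\ZZ/\ell$-covers of the curve $X$ at the generic point of $S$ are permuted transitively by $\Gal(\overline{k(S)}/k(S))$, since $\Pi^{-1}_\ell(S)$ is then irreducible and generically \'etale over $S$; hence the ordinariness of one of their Pryms --- again part (1) --- forces all of them to be ordinary, and the absence of a point of order $\ell$ on $\Theta_X$ then follows from the dictionary between $p$-ranks of Prym varieties and torsion on the theta divisor to be recorded in Section \ref{Stheta}, following Raynaud. So the content is part (1), which I would establish in two steps: first the irreducibility of $\Pi^{-1}_\ell(S)$, then the ordinariness of the Prym at its generic point.

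\emph{Irreducibility.} The restriction $\Pi_\ell\colon\Pi^{-1}_\ell(S)\to S$ is finite and flat of degree $\ell^{2g}-1$, and over a dense open substack of $S$ it is the \'etale cover attached to the action of $\pi_1(S)$ on the set $J_X[\ell]\setminus\{0\}$, whose elements index the connected unramified $\ZZ/\ell$-covers of the generic curve $X$ of $S$. This action factors through the image of $\pi_1(S)$ in the symplectic group $\mathrm{Sp}(J_X[\ell])\cong\mathrm{Sp}_{2g}(\ZZ/\ell)$ of the Weil pairing, and by the computation of the $\ell$-adic monodromy of the $p$-rank strata of $\M_g$ --- due to Achter and Pries when $g\geq 3$, and obtained by a separate argument for $g=2$, $f\geq 1$ --- this image is all of $\mathrm{Sp}_{2g}(\ZZ/\ell)$ under the hypotheses of the theorem; the excluded case $g=2$, $f=0$ is exactly the one in which the stratum is one-dimensional and this fails. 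Since $\mathrm{Sp}_{2g}(\ZZ/\ell)$ acts transitively on $J_X[\ell]\setminus\{0\}$, the cover $\Pi^{-1}_\ell(S)\to S$ is connected, so $\Pi^{-1}_\ell(S)$ is irreducible; and $\dim\Pi^{-1}_\ell(S)=\dim S=2g-3+f$ by \cite[Theorem 2.3]{FVdG}, as $\Pi_\ell$ is finite.

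\emph{The generic Prym is ordinary.} Let $\pi$ be the unramified $\ZZ/\ell$-cover at the generic point of the (now irreducible) $\Pi^{-1}_\ell(S)$, with Prym $P_\pi$ of dimension $(\ell-1)(g-1)$. I would degenerate to the bottom of the $p$-rank stratification. In a suitable compactification $\overline{\R}_{g,\ell}$, the closure of $\Pi^{-1}_\ell(S)$ should contain a point $(X_0,\pi_0)$ in which $X_0$ is a chain $E_1\cup\cdots\cup E_g$ of elliptic curves with $f$ of the $E_i$ ordinary and the remaining $g-f$ supersingular --- a stable curve of compact type, of arithmetic genus $g$ and $p$-rank $f$ --- and $\pi_0\colon Y_0\to X_0$ is an unramified $\ZZ/\ell$-cover whose class in $H^1(X_0,\ZZ/\ell)=\bigoplus_i H^1(E_i,\ZZ/\ell)$ is nonzero on every $E_i$. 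Since $(X_0,\pi_0)$ lies in the closure of the irreducible $\Pi^{-1}_\ell(S)$, the Prym $P_\pi$ specializes to the semiabelian Prym $P_{\pi_0}$ of $\pi_0$. Now each induced map $Y_i\to E_i$ is an $\ell$-isogeny of elliptic curves, and the $Y_i$ are glued along the $\ell$ points lying over each node, so $J_{Y_0}$ is a semiabelian variety of toric rank $(\ell-1)(g-1)$ whose abelian part $\prod_i J_{Y_i}$ coincides up to isogeny with the image of $\pi_0^*J_{X_0}=\prod_i \pi_i^*E_i$; hence $P_{\pi_0}$ is, up to isogeny, a torus of dimension $(\ell-1)(g-1)$. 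By the lower semicontinuity of the $p$-rank under a degeneration of an abelian variety to a semiabelian one --- the toric rank of the limit bounding from below the $p$-rank of the general fiber --- we get $f_{P_\pi}\geq(\ell-1)(g-1)=\dim P_\pi$, so $P_\pi$ is ordinary, which is the assertion of part (1).

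The step I expect to be the main obstacle is the boundary input above: showing that the closure in $\overline{\M}_g$ of \emph{each} component $S$ of the possibly reducible stratum $\M_g^f$ contains a point representing a chain of $g$ elliptic curves with the prescribed number of ordinary factors, and that over such a chain a cover nonzero on every component lies in the closure of the \emph{correct} component $\Pi^{-1}_\ell(S)$ of the preimage rather than in some other component of $\Pi^{-1}_\ell(\M_g^f)$. I would attack this through the clutching morphisms $\overline{\M}_{g_1,1}\times\overline{\M}_{g_2,1}\to\overline{\M}_g$ and $\overline{\M}_{g-1,2}\to\overline{\M}_g$ together with their lifts to the moduli space of admissible $\ZZ/\ell$-covers, using the dimension formula of \cite[Theorem 2.3]{FVdG} and induction on $g$ to push $\overline S$ step by step into the boundary while keeping track of the $p$-rank stratum and of the component of the cover; any residual low-genus cases, notably $g=2$ with $f=1$, would be settled by an explicit construction.
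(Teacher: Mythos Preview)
Your proposal is correct and matches the paper's approach: irreducibility of $\Pi^{-1}_\ell(S)$ from the full symplectic $\ell$-monodromy of $\M_g^f$ (the paper's Proposition~\ref{Pirred}, via \cite[Theorem~4.5]{AP:08}), ordinariness of the generic Prym by degeneration to the clutching boundary $\Delta_{i:g-i}$ together with induction on $g$ (Theorem~\ref{TW}), and part~(2) via Raynaud's dictionary Proposition~\ref{P1=2}. The only packaging difference is that the paper inducts directly on the conclusion---degenerating once to $\Delta_{i:g-i}$, computing the boundary Prym $p$-rank as $f_1'+f_2'+(\ell-1)$ by Lemma~\ref{LDelig-i}, and applying the inductive hypothesis to the two pieces with vacuous base case $g=1$---whereas you aim for a full chain of elliptic curves where the Prym is visibly a torus; but as you correctly anticipate, placing that chain in the closure of the \emph{given} component already requires the same inductive clutching argument, so the two formulations are equivalent and the paper's is slightly lighter on bookkeeping.
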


The second theorem
demonstrates the existence of unramified double covers $\pi:Y \to X$ such that the Prym $P_\pi$ is almost ordinary
(with $p$-rank $f'=g-2$).

\begin{theorem} \label{Tintro2}
Let $\ell =2$.  Let $g \geq 2$ and $0 \leq f \leq g$ (with $f \geq 2$ when $p=3$).
Let $S$ be an irreducible component of ${\mathcal M}_g^f$.
\begin{enumerate}
\item (See Theorem \ref{T:NE}) 
The locus of points of $\Pi^{-1}_2(S)$ representing covers whose Prym $P_\pi$ is almost ordinary 
is non-empty with codimension $1$ in $\Pi_2^{-1}(S)$. 
\item (See Theorem \ref{Ttheta2}) The locus of points of $S$ representing curves $X$ for which $\Theta_X$
contains a point of order $2$ is non-empty with codimension $1$ in $S$.
\end{enumerate}
\end{theorem}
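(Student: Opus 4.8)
The plan is to deduce (2) from (1) via the Raynaud dictionary of Section \ref{Stheta}, and to prove (1) by combining a purity argument with an explicit boundary construction. I begin with the reduction of (2). When $\ell=2$ the unramified double covers of $X$ are indexed by the nonzero points of $J_X[2]$, and, following Raynaud (see Section \ref{Stheta}), a nonzero $\tau\in J_X[2]$ lies on $\Theta_X$ exactly when the Prym of the associated cover $\pi_\tau$ fails to be ordinary. Thus the locus in $S$ of curves $X$ with a point of order $2$ on $\Theta_X$ is the image, under the finite morphism $\Pi_2$, of the locus $W\subseteq\Pi_2^{-1}(S)$ of covers with non-ordinary Prym. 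Granting (1), $W$ has a component of dimension $2g-4+f$ along which $P_\pi$ is almost ordinary, and its $\Pi_2$-image again has dimension $2g-4+f$; since this image is closed ($J_X[2]$ is finite flat over $S$ and $\Theta_X$ varies in a relative divisor) and proper in $S$ by Theorem \ref{Tintro1}(2), it has codimension exactly $1$ in the irreducible $(2g-3+f)$-dimensional space $S$.

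For the dimension assertion in (1), I would use purity. By Theorem \ref{Tintro1}(1), $\Pi_2^{-1}(S)$ is irreducible of dimension $2g-3+f$ with generic Prym ordinary. The relative Prym extends to a semiabelian scheme of relative dimension $g-1$ over a compactification of $\Pi_2^{-1}(S)$, and (away from the bad locus) its non-ordinary locus $\overline{W}$ is the zero scheme of the Hasse invariant, a section of a line bundle; by Krull's principal ideal theorem every component of $\overline{W}$ has codimension $\le 1$, and since the Hasse invariant is not identically zero, $\overline{W}$ has pure codimension $1$ once it is non-empty. It therefore suffices to exhibit a single cover of a curve in $S$ whose Prym has $p$-rank exactly $g-2$. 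Indeed, the component $\overline{W}_0$ of $\overline{W}$ through such a point cannot equal a boundary component of $\overline{\Pi_2^{-1}(S)}$ (on any boundary component the generic Prym is assembled from Pryms of generic covers of lower-genus curves — ordinary by Theorem \ref{Tintro1}(1) — together with toric factors, which are ordinary, so is itself ordinary), and, containing a point with $f'=g-2$, it cannot lie in $\{f'\le g-3\}$; hence the almost-ordinary locus is a dense open subset of $\overline{W}_0$ meeting $\Pi_2^{-1}(S)$, of dimension $2g-4+f$.

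It remains to construct one such cover over the given $S$; I would do this by degenerating to the boundary of $\overline{\M}_g$, using the boundary analysis of the $p$-rank strata of $\R_{g,\ell}$ developed for Theorem \ref{Tintro1} to ensure the degeneration lies in the closure of $S$ and smooths back into $S$. If $f\le g-1$, degenerate to the compact-type curve $X_0=X'\cup_{\mathrm{pt}}E$, where $X'$ is a generic curve of genus $g-1$ and $p$-rank $f$ and $E$ is a supersingular elliptic curve; the unramified double cover of $X_0$ that is trivial over $E$ and a generic connected double cover over $X'$ has Prym isomorphic to $P_{\pi'}\times E$, of $p$-rank $(g-2)+0=g-2$ because $P_{\pi'}$ is ordinary by Theorem \ref{Tintro1}(1) (for $g=2$, $P_{\pi'}$ is $0$-dimensional and this is automatic), while $X_0$ has $p$-rank $f$. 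If $f=g$, degenerate instead to $X_0=X'\cup_{\mathrm{pt}}C$ with $X'$ generic ordinary of genus $g-2$ and $C$ an ordinary genus-$2$ curve carrying an unramified double cover with supersingular Prym; extending that cover trivially over $X'$ yields a Prym isomorphic to $J_{X'}\times P_C$, again of $p$-rank $g-2$, while $X_0$ is ordinary.

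The main obstacle is really two-fold. The bookkeeping point is that the $p$-rank of a semiabelian Prym counts its toric rank, so the $p$-rank drop must be caused by an honest \emph{abelian} supersingular factor; this forces the degenerations above to be of compact type with a supersingular component rather than of non-compact type, whose Prym would acquire a torus and stay ordinary. The genuinely new input — and the only place the hypothesis $p\ge 5$ is used — is the base case: an ordinary genus-$2$ curve over $\bar{\FF}_p$ admitting an unramified double cover whose Prym is a supersingular elliptic curve. I would establish this explicitly, using the classical description of the Prym of such a cover as an elliptic curve whose modular invariant is a rational function of the cross-ratios of the six Weierstrass points of $X$ (valid in characteristic $\ne 2$), and choosing those points so that this invariant is supersingular while $X$ remains ordinary — possible since supersingularity is a non-empty finite condition on the $j$-line and ordinariness of $X$ is open and dense.
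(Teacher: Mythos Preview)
Your reduction of (2) to (1) via Proposition~\ref{P1=2} is correct and is exactly what the paper does. Your dimension argument via the Hasse invariant is a legitimate variant of the paper's purity step (Proposition~\ref{Phalfway}). The genuine problem is the smoothing step for $f\le g-1$.

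You construct a boundary point on the divisor $\Delta_{g-1}[\bar\R_g]$ over $\Delta_1[\bar\M_g]$: the cover is nontrivial on $X'$ (genus $g-1$, $p$-rank $f$) and trivial on $E$, with $E$ supersingular. You then assert that the component $\overline{W}_0$ of the non-ordinary locus through this point cannot coincide with a boundary component of $\overline{\Pi_2^{-1}(S)}$ because ``on any boundary component the generic Prym is \ldots\ ordinary.'' This is false for the very boundary component your point sits on. The irreducible components of $\overline{\Pi_2^{-1}(S)}\cap\partial\R_g$ are not the full boundary divisors of $\bar\R_g$; they are cut down by the $p$-rank condition on $X$. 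Concretely, the component containing your point is (the image of) $\kappa_{g-1}\bigl(\bar\M_{1;1}^{0}\times \bar W_{g-1;1}^{f}\bigr)$, on which $E$ is \emph{forced} to be supersingular. By Lemma~\ref{LDeli} every cover represented by this component has $P_\pi\simeq P_{\pi'}\oplus E$ with $E$ supersingular, hence is non-ordinary. This boundary component has dimension exactly $2g-4+f$, so nothing prevents it from being all of $\overline{W}_0$. That this obstruction is real, not hypothetical, is the content of Remark~\ref{Rsingular}: for $p=3$ and $g=2$ one has a one-dimensional boundary family with supersingular Prym that does \emph{not} deform to smooth curves, since $\R_2^{(1,0)}=\emptyset$. (Your $f=g$ construction does not suffer from this, because there the boundary component of $\overline{\Pi_2^{-1}(S)}$ has both factors ordinary and hence generically ordinary Prym; but that case alone is not enough.)

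The paper circumvents this by degenerating to $\Delta_{3:g-3}$ rather than $\Delta_{g-1}$, and by arranging the $p$-rank drop on the \emph{Prym} side rather than through a Jacobian factor: it first proves a genus-$3$ base case (Proposition~\ref{P:NE}, which in turn rests on the full $g=2$ analysis of Proposition~\ref{prop:basecase} including $\R_2^{(0,0)}$ and $\R_2^{(1,0)}$ --- this is where $p\ge 5$ enters), and then clutches a point of $\R_{3}^{(f_1,1)}$ with a generic point of $W_{g-3}^{f_2}$. The resulting boundary stratum has dimension $2g-5+f$, strictly less than $2g-4+f$, so the component of $\bar\R_g^{(f,g-2)}$ through it is forced by Proposition~\ref{Phalfway} to leave that stratum, and since the clutched point lies in the interior of a single boundary divisor, it must leave the boundary altogether. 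Your argument can be repaired along the same lines, but not with the degeneration you chose.
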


As an application of Theorem \ref{Tintro2}, we prove:

\begin{application} \label{A3} (See Corollary \ref{Cdownmore})
Let $\ell=2$ and $p \geq 5$.  Let $g \geq 2$ and $0 \leq f \leq g$.
Then there exists a smooth curve $X/\bar{{\mathbb F}}_p$ of genus $g$ and $p$-rank $f$
having an unramified double cover $\pi:Y \to X$ for which the Prym has $p$-rank $f'$ for each $\frac{g}{2}-1 \leq f' \leq g-1$.
\end{application}

Here is an outline of the paper.
Section \ref{Sppt} contains background about Prym varieties and the $p$-rank stratification of ${\mathcal M}_g$.  
In Section \ref{boundary}, 
we analyze the $p$-ranks of Pryms of covers of singular curves 
and the $p$-rank stratification of the boundary of ${\mathcal R}_{g, \ell}$.

Section \ref{Sordinary} contains the proof of Theorem \ref{Tintro1}(1). 
The proof mirrors Nakajima's technique of degeneration to the boundary of ${\mathcal R}_{g, \ell}$;
the argument is more complicated, however, because ${\mathcal M}_g^f$ may not be irreducible.
To avoid this difficulty, for each irreducible component
$S$ of ${\mathcal M}_g^f$, we consider the ${\mathbb Z}/\ell$-monodromy of the tautological curve $X \to S$, 
namely the image of the fundamental group $\pi_1(S, s_0)$ in ${\rm Aut}({\rm Pic}^0(X)[\ell]_{s_0})$.
A key point is that the ${\mathbb Z}/\ell$-monodromy of $X \to S$ is as large as possible, 
namely ${\rm Sp}_{2g}({\mathbb Z}/\ell)$ \cite[Theorem 4.5]{AP:08}.
We use this to prove that $\Pi^{-1}_\ell(S)$ is irreducible and 
that it degenerates to a particular boundary component $\Delta_{i:g-i}$.

In Sections \ref{Spurity}-\ref{Snotord}, we restrict to the case $\ell=2$.
In Section \ref{Spurity}, we stratify ${\mathcal R}_{g,2}$ by $(f,f')$ 
where $f$ (resp.\ $f'$) is the $p$-rank of $X$ (resp.\ $P_\pi$).
Using purity, we prove that the dimension of each component of the $(f,f')$ stratum of ${\mathcal R}_{g,2}$ is at least $g-2+f+f'$ Proposition~\ref{Phalfway}.
Section \ref{Slowgenus} contains results
about non-ordinary Pryms in the low genus cases $g=2,3$, generalizing \cite[Theorem 6.1]{FVdG}.
  
Section \ref{Snotord} contains the proof of Theorem \ref{Tintro2}(1).  
The proof is again an inductive argument which uses 
the boundary component $\Delta_{i:g-i}$, but it relies on more refined information from 
each of Sections \ref{boundary}, \ref{Sordinary}, \ref{Spurity}, and \ref{Slowgenus}.

Application \ref{A3} (Corollary \ref{Cdownmore}) follows from Theorems \ref{Tintro1} and \ref{Tintro2} 
using a `straight-forward' deformation argument for $f' \leq g-3$:
Suppose $\pi_s$ is an unramified double cover of a {\it singular} curve of genus $g$ and $p$-rank $f$ 
for which the Prym has $p$-rank $f'$.
Applying \cite[Section 3]{AP:08}, one can deform $\pi_s$ to an
unramified double cover of a {\it smooth} curve of genus $g$ whose $p$-rank is still $f$.  
However, it is possible that the $p$-rank $f'$ of the Prym increases in this deformation.
In fact, there are situations where this is guaranteed to happen, see Remark \ref{Rsingular}.
Under the hypotheses of Corollary \ref{Cdownmore}, 
we construct a deformation of $\pi_s$ for which the $p$-rank of the Prym remains constant.
We emphasize that the technique used in Theorems~\ref{Tintro1} - \ref{Tintro2}
is stronger than the straight-forward approach
and gives more information about the $p$-rank stratification of ${\mathcal R}_{g,\ell}$. 
%in that it gives information about every component of ${\mathcal R}_g^{(f,f')}$.
%in particular, when $\ell=2$ and $f'=g-1$ is maximal, the straight-forward approach only proves that 
%{\it one} component of $\Pi_\ell^{-1}({\mathcal M}_g^f)$ has a generic point representing an unramified double
%cover whose Prym is ordinary.  

Section \ref{Stheta} contains the definition of the theta divisor $\Theta_X$ and the proofs of 
Theorems \ref{Tintro1}(2) and \ref{Tintro2}(2).
We then compare our results with those of Raynaud \cite{raynaudsections, raynaudrevetements} and Pop/Saidi \cite{popsaidi}.
Briefly, Raynaud's results are stronger in that they apply to an arbitrary base curve $X$ 
but are weaker in other ways: his result about ordinary Pryms applies only when $\ell >(p-1)3^{g-1}g!$, 
in which case he shows that at least one of the Pryms is ordinary;
and, in his result for non-ordinary Pryms, the Galois group of the cover
is solvable but not cyclic and the $p$-rank of the Prym is not determined.
In \cite[Proposition 2.3]{popsaidi}, the result is stronger in that 
it applies to an arbitrary curve which is either non-ordinary or whose Jacobian is absolutely simple,
but is weaker in that the degree of the cyclic unramified cover and the $p$-rank of the Prym are not determined.

Section \ref{Sopen} contains some open questions.
For example, in Section~\ref{Sexample}, we illustrate the difficulty in proving these results computationally, 
even for $g=2$ and a fixed small prime $p$.
See \cite{Bouw} for results about $p$-ranks of {\it ramified} cyclic covers of curves.

\section{Prym varieties and $p$-rank stratifications} \label{Sppt}

Suppose $X$ is a smooth projective curve of genus $g$ defined over $k$.
The Jacobian $J_X$ of $X$ is a principally polarized abelian variety of dimension $g$.
A ${\mathbb Z}/\ell$-cover $\pi: Y \to X$ is a Galois cover together with an isomorphism
$\iota: {\rm Gal}(Y/X) \to {\mathbb Z}/\ell$.
For a prime $\ell \not = p$, 
there is a bijection between points of order $\ell$ on $J_X$ and unramified connected 
${\mathbb Z}/\ell$-covers $\pi:Y \to X$.

\subsection{Prym varieties}

Suppose $\pi: Y \rightarrow X$ is an unramified ${\mathbb Z}/\ell$-cover. 
The {\it Prym variety} $P_\pi$ is the connected component containing $0$ of 
the norm map on Jacobians. 
More precisely, if $\sigma$ is the endomorphism of $J_Y$ induced by a generator $\sigma$ of ${\rm Gal}(Y/X)$, then 
 \[P_{\pi}={\rm Im}(1-\sigma)=\ker(1+\sigma+\ldots+\sigma^{\ell-1})^0.\] 
The canonical principal polarization of $J_Y$ induces a polarization on $P_\pi$ \cite[Page 6]{LO}. 
This polarization is principal when $\ell=2$.

\subsection{Moduli spaces of unramified cyclic covers}

Let $\mathcal R_{g,\ell}$ denote the moduli space whose points represent unramified
${\mathbb Z}/\ell$-covers of smooth projective curves of genus $g$; it is a smooth Deligne-Mumford stack \cite[Page 5]{CF}.

The points of ${\mathcal R}_{g,\ell}$ can also represent triples $(X,\eta,\phi)$ 
where $X$ is a smooth genus $g$ curve equipped with a line bundle $\eta \in {\rm Pic}(X)$ 
and an isomorphism $\phi:\eta^{\otimes \ell} \stackrel{\sim}{\to} \mathcal O_X$.  
This is because the data of the ${\mathbb Z}/\ell$-cover $\pi:Y \to X$ is equivalent to the data $(X, \eta, \phi)$.

The morphism $\Pi_\ell: {\mathcal R}_{g, \ell} \to {\mathcal M}_g$, which sends the point representing $\pi:Y \to X$ to the point 
representing $X$, is surjective, \'etale, and finite of degree $\ell^{2g}-1$.  Thus $\dim({\mathcal R}_{g, \ell})=3g-3$.

\subsection{Marked covers} \label{Smarkedcurves}

A point of ${\mathcal M}_{g;1}$ represents a smooth curve $X$ of genus $g$ together with a marking, namely 
the choice of a point $x \in X$.
A point of ${\mathcal R}_{g, \ell;1}$ represents an unramified ${\mathbb Z}/\ell$-cover $\pi: Y \to X$, 
where $X$ is a smooth curve of genus $g$, together with a marking $x' \mapsto x$, namely the choice of a point $x' \in \pi^{-1}(x)$. 
The marking $x' \mapsto x$ determines
a labeling of the $\ell$ points of the fiber $\pi^{-1}(x)$ because of the ${\mathbb Z}/\ell$-action.
 There are forgetful maps $\psi_M: {\mathcal M}_{g;1} \to {\mathcal M}_g$ 
 and $\psi_R: {\mathcal R}_{g, \ell;1} \to {\mathcal R}_{g, \ell}$.

\begin{lemma} \label{Lirreducible}
If $S \subset {\mathcal M}_g$ is irreducible, then $\psi_M^{-1}(S)$ is irreducible in ${\mathcal M}_{g;1}$.
If $Q \subset {\mathcal R}_{g,\ell}$ is irreducible, then $\psi_R^{-1}(Q)$ is irreducible in ${\mathcal R}_{g, \ell ;1}$.
\end{lemma}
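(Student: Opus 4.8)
The plan is to recognize both forgetful maps as (restrictions of) tautological families and then apply a general irreducibility criterion for such families.

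First I would observe that $\psi_M : \M_{g;1} \to \M_g$ is the universal curve: the fiber over the point representing $X$ is $X$ itself. Likewise, since a marking $x' \mapsto x$ of an unramified cover $\pi : Y \to X$ is precisely the datum of a point $x' \in Y$ (with $x = \pi(x')$ then determined), $\psi_R : \R_{g, \ell;1} \to \R_{g,\ell}$ is the universal curve $\mathcal Y \to \R_{g,\ell}$, whose fiber over the point representing $\pi : Y \to X$ is the connected curve $Y$. Consequently $\psi_M^{-1}(S) \to S$ and $\psi_R^{-1}(Q) \to Q$ are smooth morphisms of relative dimension $1$, all of whose geometric fibers are irreducible (they are smooth connected projective curves), over the irreducible bases $S$ and $Q$.

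Next I would invoke the following standard fact: if $f : \mathcal T \to \mathcal B$ is a smooth surjective morphism of finite type with $\mathcal B$ irreducible and every geometric fiber irreducible of the same dimension $d$, then $\mathcal T$ is irreducible. For completeness, here is the argument. Since $f$ is smooth it is flat and open, so $\mathcal T$ is equidimensional of dimension $\dim \mathcal B + d$, and by the fiber-dimension inequality every irreducible component of $\mathcal T$ dominates $\mathcal B$ (a non-dominant component would have dimension at most $(\dim \mathcal B - 1) + d$). Let $\eta$ be the generic point of $\mathcal B$; the generic fiber $\mathcal T_\eta$ is irreducible with generic point $\tau$, and $Z := \overline{\{\tau\}}$ is an irreducible closed subset of $\mathcal T$ with $\dim Z = \dim \mathcal B + d$ and $f(Z)$ dense. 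By Chevalley's theorem there is a dense open $U \subseteq \mathcal B$ with $U \subseteq f(Z)$ and $\dim(Z_s) = d = \dim(\mathcal T_s)$ for all $s \in U$; since each $\mathcal T_s$ is irreducible of dimension $d$, this forces $Z_s = \mathcal T_s$ for $s \in U$, i.e. $f^{-1}(U) \subseteq Z$. As $f^{-1}(U)$ meets each component of $\mathcal T$ in a dense open subset of that component, every component lies in $Z$, so $\mathcal T = Z$ is irreducible. Applying this with $d = 1$ to $\psi_M^{-1}(S) \to S$ and to $\psi_R^{-1}(Q) \to Q$ yields the lemma.

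The one point requiring care — and the main obstacle — is that $\M_{g;1}$, $\M_g$, $\R_{g,\ell;1}$, $\R_{g,\ell}$ are Deligne–Mumford stacks rather than schemes, so automorphisms intervene and the fiber of $\psi_M$ over $[X]$ is really $[X/\Aut(X)]$. I would handle this by checking irreducibility after pulling back the universal curve along a smooth cover of $S$ (respectively $Q$) by a scheme: the resulting family is again a smooth surjective family of geometrically irreducible curves over a scheme, to which the scheme-level statement above applies. Alternatively, one observes that the dimension count and generic-point argument transfer verbatim to locally Noetherian DM stacks, since smooth morphisms of such stacks are flat and open and the theory of irreducible components and relative dimension is available in that setting. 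Either way, the geometric input reduces to the elementary fact recorded above, together with the fact that each fiber of $\psi_M$ and $\psi_R$ is a smooth connected curve.
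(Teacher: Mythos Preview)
Your proof is correct and follows essentially the same route as the paper: both identify the fibers of $\psi_M$ and $\psi_R$ as the irreducible curves $X$ and $Y$ respectively, and then conclude that a family with irreducible fibers over an irreducible base has irreducible total space. The paper compresses the second step into a one-line appeal to Zariski's main theorem, whereas you spell out a generic-fiber/Chevalley argument and address the Deligne--Mumford stack subtlety; these are differences of detail, not of strategy.
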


\begin{proof}
The fiber of $\psi_M$ above the point of ${\mathcal M}_g$ representing $X$ is isomorphic to $X$ and is thus irreducible.
The fiber of $\psi_R$ above the point of ${\mathcal R}_{g, \ell}$ representing $\pi:Y \to X$ is isomorphic to $Y$ and is thus irreducible.  
The result follows from Zariski's theorem.
\end{proof}

\subsection{The $p$-rank}

Let $\mu_p$ be the kernel of Frobenius on ${\mathbb G}_m$.
The $p$-rank of a semi-abelian variety $A'$ is $f_{A'}={\rm dim}_{{\mathbb F}_p}{\rm Hom}(\mu_p, A')$.
If $A'$ is an extension of an abelian variety $A$ by a torus $T$, then $f_{A'}=f_A + {\rm rank}(T)$.

For an abelian variety $A$,     
the $p$-rank can also be defined as the integer $f_A$ such that 
the number of $p$-torsion points in $A(k)$ is $p^{f_A}$.
If $A$ has dimension $g_A$ then $0 \leq f_A \leq g_A$.
The $p$-rank is invariant under isogeny $\sim$ of abelian varieties.

The $p$-rank of a stable curve $X$ is that of ${\rm Pic}^0(X)$.
There are three $p$-ranks associated with an unramified cover $\pi:Y \to X$, namely the $p$-rank $f$ of $X$, 
the $p$-rank $f'$ of $P_{\pi}$, and the $p$-rank of $Y$ which equals $f+f'$.

\subsection{The $p$-rank stratification}

If $X/S$ is a
semi-abelian scheme over a Deligne-Mumford stack, there is a
stratification $S = \cup S^{f}$ by locally closed reduced substacks such that $s \in S^f(k)$ if and only if 
$f(X_s) =f$ (\cite[Theorem 2.3.1]{katzsf}, see also \cite[Lemma
2.1]{AP:08}).  So, ${\mathcal M}_g^f$ is the locally closed reduced substack of ${\mathcal M}_g$
whose points represent smooth curves of genus $g$ with $p$-rank $f$.

\subsection{Compactification of ${\mathcal M}_g$}

Suppose $X$ is a stable curve with irreducible components $C_i$, for $1 \leq i \leq s$.
Let $\tilde{C}_i$ be the normalization of $C_i$.
By \cite[Example 8, Page 246]{BLR}, $J_X$ is canonically an extension of an abelian variety by a torus $T$.
There is a
short exact sequence:
\begin{equation} \label{eqBLR}
1 \rightarrow T \rightarrow J_X \rightarrow \oplus_{i=1}^s J_{\tilde{C}_i} \rightarrow 1.
\end{equation}
The rank $r_T$ of $T$ is the rank of the cohomology group $H^1(\Gamma_X,\mathbb Z)$,  
where $\Gamma_X$ denotes the dual graph of $X$. 
One says that $X$ has {\it compact type} if $T$ is trivial.

Let $\bar {\mathcal M}_g$ be the Deligne-Mumford compactification of ${\mathcal M}_g$;
it is a smooth proper Deligne-Mumford stack. 
The boundary $\partial {\mathcal M}_g =\bar {\mathcal M}_g - {\mathcal M}_g$ is the union of the components $\Delta_0[\bar {\mathcal M}_g]$ and 
$\Delta_i[\bar {\mathcal M}_g]$ for $1 \leq i \leq \lfloor g/2 \rfloor$ defined as in \cite[Section 2.3]{AP:08}.

For $1 \leq i \leq \lfloor g/2 \rfloor$, 
the $p$-rank $f$ stratum of the boundary component $\Delta_i[\bar {\mathcal M}_g]$ is the union of the images of the clutching morphisms:
\begin{equation} \label{jacisoF}
\kappa_i: \bar {\mathcal M}_{i;1}^{f_1} \times \bar {\mathcal M}_{g-i; 1}^{f_2} \to \Delta_i[\bar {\mathcal M}_g^{f_1+f_2}],
\end{equation}
for all pairs $\{f_1, f_2\}$ of non-negative integers such that $f_1+f_2=f$.
The $p$-rank $f$ stratum of the boundary component $\Delta_0[\bar {\mathcal M}_g]$ is the image of the clutching morphism:
\begin{equation} \label{noncompactisoF}
\kappa_0: \bar {\mathcal M}_{g-1;2}^{f-1} \to \Delta_0[\bar {\mathcal M}_g^{f}].
\end{equation}

%%%%%%%%%%%%%%%%%%%%%%%%%%%%%%%%%%%%%%%%%%%%%%%%%%%%%%%%%%%%%%%%%%%%%%%%%%%%%%%%
\section{The $p$-rank stratification of the boundary of ${\mathcal R}_{g,\ell}$} \label{boundary} %%%%%%%%%%%%%%%%%%%%%%%%%%%%%%%%%%%%%%%%
%%%%%%%%%%%%%%%%%%%%%%%%%%%%%%%%%%%%%%%%%%%%%%%%%%%%%%%%%%%%%%%%%%%%%%%%%%%%%%%%

In this section, we study Pryms of unramified covers of singular curves.
Then we analyze the $p$-rank stratification of the boundary of ${\mathcal R}_{g,\ell}$, whose points typically represent 
unramified ${\mathbb Z}/\ell$-covers of singular curves.
Lemma \ref{LdimW}
states that every component of the boundary of $\Pi^{-1}_\ell({\mathcal M}_{g}^f)$ has dimension $2g-4+f$.
This is used for Propositions \ref{TdegenW} and \ref{P:NE}.
We compute the $p$-rank of the Prym of the cover represented by the generic point
of each boundary strata;
in particular, Lemma \ref{LDelig-i} is used in Sections \ref{Sordinary}, \ref{Slowgenus}, and \ref{Snotord}.
% for Theorem \ref{TW}, Proposition \ref{P:NE}, 
%Theorem \ref{T:NE}, and Corollary \ref{Cdownmore}.

This section relies on structural results from \cite{F_generalL} and \cite{CF}.
It is necessary to include some material from these references.
The following lemma will also be useful.

\begin{lemma} \label{Lintersect}  \cite[page 614]{V:stack}.
If $A$ and $B$ are substacks of a smooth proper stack $S$ then
\[{\rm codim}(A \cap B, S) \leq {\rm codim}(A, S) + {\rm codim}(B,S).\]
\end{lemma}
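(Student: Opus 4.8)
The plan is to reduce this to the classical affine dimension theorem --- on a smooth scheme, the codimension of an intersection of two subvarieties is at most the sum of the codimensions --- via the standard trick of realizing an intersection as an intersection with the diagonal. Properness of $S$ plays no role in the inequality; only smoothness is used.

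First I would pass to a smooth atlas. Since $S$ is a smooth Deligne--Mumford stack, choose a smooth surjection $u\colon U \to S$ with $U$ a scheme, necessarily smooth. Because $u$ is smooth, codimension of a (locally closed) substack is preserved under pullback: ${\rm codim}(u^{-1}(A),U)={\rm codim}(A,S)$, and likewise for $B$ and for $A\cap B$, while $u^{-1}(A\cap B)=u^{-1}(A)\cap u^{-1}(B)$; moreover each irreducible component of $A\cap B$ pulls back to a union of components of $u^{-1}(A)\cap u^{-1}(B)$ of the same codimension. Thus it suffices to prove, for closed subschemes $A'=u^{-1}(A)$ and $B'=u^{-1}(B)$ of the smooth scheme $U$, that every irreducible component of $A'\cap B'$ has codimension at most ${\rm codim}(A',U)+{\rm codim}(B',U)$; this in particular yields the asserted bound on ${\rm codim}(A'\cap B',U)$.

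Next comes the diagonal trick. Inside $U\times U$, which is smooth of dimension $2\dim U$, let $\Delta\cong U$ be the diagonal; there is a natural identification $A'\cap B'\cong(A'\times B')\cap\Delta$. Since $U$ is smooth, $\Delta\hookrightarrow U\times U$ is a regular embedding of codimension $\dim U$: locally on $U\times U$ it is the zero locus of a regular sequence of $\dim U$ functions, namely the differences of the two pullbacks of a local coordinate system on $U$. Cutting the closed subscheme $A'\times B'$ by these $\dim U$ hypersurfaces one at a time and invoking Krull's principal ideal theorem at each step, every irreducible component $Z$ of $(A'\times B')\cap\Delta$ satisfies
\[
\dim Z \ \geq\ \dim(A'\times B')-\dim U \ =\ \dim A'+\dim B'-\dim U,
\]
hence ${\rm codim}(Z,U)=\dim U-\dim Z\leq{\rm codim}(A',U)+{\rm codim}(B',U)$. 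Transporting this back through the atlas $u$ gives the lemma, the inequality being vacuous when $A\cap B=\emptyset$.

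The step requiring the most care is the reduction to schemes: one must check that codimension is well defined for substacks and insensitive to the choice of smooth cover, and one must notice that it is the diagonal of the \emph{scheme} $U$ that supplies the regular sequence of length $\dim U$ --- the diagonal of the stack $S$ itself need not even be a closed immersion, so the argument genuinely belongs downstairs on $U\times U$. Granting this, the remainder is just the classical dimension-of-intersections estimate.
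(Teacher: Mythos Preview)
The paper gives no proof; it only cites Vistoli. Your argument --- pull back to a smooth atlas, identify $A'\cap B'$ with $(A'\times B')\cap\Delta$, and cut by the $\dim U$ local equations of the diagonal --- is the standard one, and it is correct provided $A$ and $B$ are \emph{pure-dimensional}.

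Without that hypothesis there is a genuine gap in your Krull step, and in fact the inequality as literally stated is false. Krull's principal ideal theorem, applied to $A'\times B'$ cut by $\dim U$ equations, gives for each component $Z$ of $(A'\times B')\cap\Delta$ only $\dim Z\ge\dim W_0-\dim U$, where $W_0$ is an irreducible component of $A'\times B'$ containing $Z$; you cannot replace $\dim W_0$ by $\dim(A'\times B')$ unless every component of $A'\times B'$ has the same dimension. A counterexample, with $S$ smooth and proper: in $S=(\mathbb{P}^1)^3$ take $A=\{p\}\cup\bigl(\{a\}\times(\mathbb{P}^1)^2\bigr)$ and $B=\{p\}\cup\bigl(\{b\}\times(\mathbb{P}^1)^2\bigr)$ for distinct $a,b\in\mathbb{P}^1$ and a point $p$ whose first coordinate is neither $a$ nor $b$; then ${\rm codim}(A,S)={\rm codim}(B,S)=1$ but $A\cap B=\{p\}$ has codimension $3>2$. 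So your assertion that properness plays no role is correct, but only because the statement needs the tacit pure-dimensionality hypothesis to be true at all, and under that hypothesis your argument already works over any smooth base. Every application of the lemma in the paper is to pure-dimensional substacks, so this is surely the intended reading.
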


\subsection{Compactification of ${\mathcal R}_{g, \ell}$}

By \cite[Definition 1.2]{CF}, a twisted curve $\texttt{C}$ is a one dimensional stack such that the corresponding coarse moduli space $C$ is a stable curve whose smooth locus is represented by a scheme and whose singularities are nodes with local picture $[\{xy=0\}/\mu_r]$ with $\zeta \in \mu_r$ acting as $\zeta (x,y)=(\zeta x, \zeta^{-1}y)$.
The definition of a faithful line bundle $\eta \in {\rm Pic}(\texttt C)$ is in \cite[Definition 1.3]{CF}. 
By \cite[Definition 1.5]{CF}, a level-$\ell$ twisted curve of genus $g$ is a triple $[\texttt X,\eta,\phi]$  
where $\texttt{X}$ is a twisted curve of genus $g$,
$\eta \in {\rm Pic}(\texttt X)$ is a faithful line bundle, 
and $\phi: \eta^{\otimes \ell} \rightarrow \mathcal{O}_{\texttt X}$ is an isomorphism. 

By \cite[page 6]{CF}, 
the moduli space ${\mathcal R}_{g,\ell}$ admits a compactification $\bar{{\mathcal R}}_{g,\ell}$ whose points represent  
level-$\ell$ twisted curves of genus $g$.   
It is a smooth Deligne-Mumford stack and there is a finite forgetful  
morphism $\Pi_\ell: \bar {\mathcal R}_{g, \ell} \to \bar {\mathcal M}_g$.

Let $\partial {\mathcal R}_{g, \ell} = \bar {\mathcal R}_{g, \ell} - {\mathcal R}_{g, \ell}$.
Some points of $\partial {\mathcal R}_{g, \ell}$ cannot be interpreted in terms of $\ell$-torsion line bundles 
or ${\mathbb Z}/\ell$-covers of a scheme-theoretic curve.  
For the sake of intuition, whenever possible, 
we describe the generic point of a boundary component of $\bar{{\mathcal R}}_{g, \ell}$ 
in terms of the cover $\pi:Y \to X$ it represents.

\subsection{Definition of $W_g^f$}

\begin{definition}
For $0 \leq f \leq g$, 
define $W_g^f=\Pi_\ell^{-1}({\mathcal M}_g^f)$ and $\bar W_g^f=\Pi_\ell^{-1}(\bar {\mathcal M}_g^f)$.  
\end{definition}

The points of $W_g^f$ represent unramified ${\mathbb Z}/\ell$-covers 
$\pi: Y \to X$ of a smooth curve $X$ of genus $g$ and $p$-rank $f$.

\begin{lemma}\label{LdimW}
Let $g \geq 1$ and $0 \leq f \leq g$.  Then $W_g^f$ is non-empty.
For $g \geq 2$, let $Q$ be an irreducible component of $\bar W_g^f$.  Then 
\begin{enumerate}
\item $Q$ has dimension $2g-3+f$;
\item $W_g^f \cap Q$ is open and dense in $Q$ (the generic point of $Q$ represents a smooth curve);
\item the dimension of 
every component of $Q \cap \partial {\mathcal R}_{g, \ell}$ is $2g-4+f$.
\end{enumerate}
\end{lemma}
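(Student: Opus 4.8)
The plan is to deduce every assertion from three dimension estimates for $\bar W_g^f$, which by definition equals $\Pi_\ell^{-1}(\bar\M_g^f)$ and which I view as the $p$-rank-$f$ locus in the smooth $(3g-3)$-dimensional stack $\bar\R_{g,\ell}$. Since $\Pi_\ell$ is \'etale over $\M_g$, one has $\Pi_\ell^{-1}(\partial\M_g)=\partial\R_{g,\ell}$, so $W_g^f=\bar W_g^f\cap\R_{g,\ell}$ and $\bar W_g^f\cap\partial\R_{g,\ell}=\Pi_\ell^{-1}(\bar\M_g^f\cap\partial\M_g)$. Non-emptiness of $W_g^f$ for $0\le f\le g$ is then immediate: $\M_g^f$ is non-empty (by \cite[Theorem 2.3]{FVdG} for $g\ge 2$, and directly for $g=1$), and $\Pi_\ell$ is surjective, so $W_g^f=\Pi_\ell^{-1}(\M_g^f)\neq\emptyset$.

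First I would establish three facts. (A) $\dim W_g^f=2g-3+f$, because $W_g^f\to\M_g^f$ is finite and surjective and $\dim\M_g^f=2g-3+f$ by \cite[Theorem 2.3]{FVdG}. (B) $\dim(\bar W_g^f\cap\partial\R_{g,\ell})\le 2g-4+f$: as $\Pi_\ell$ is finite it suffices to bound $\dim(\bar\M_g^f\cap\partial\M_g)$, and by \eqref{jacisoF} and \eqref{noncompactisoF} this locus is covered by the images of the finite clutching morphisms $\kappa_i$ and $\kappa_0$, whose sources have dimension $2g-4+f$ (using that the $p$-rank-$e$ stratum of $\bar\M_{h;n}$ has dimension $2h-3+e+n$, which follows from \cite{FVdG} by induction on $h$). (C) $\dim_q\bar W_g^f\ge 2g-3+f$ at every point $q$: this is the purity lower bound for the $p$-rank stratification---the locus of $p$-rank $\le f$ has codimension at most $g-f$ in $\bar\R_{g,\ell}$ (Grothendieck--Katz; cf. \cite{katzsf} and \cite[Lemma 2.1]{AP:08})---combined with the fact that, near $q$, this locus coincides with $\bar W_g^f$, since the closed locus of $p$-rank $\le f-1$ avoids $q$.

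With these in hand, let $Q$ be an irreducible component of $\bar W_g^f$. By (C) one has $\dim Q\ge 2g-3+f$, so by (B) the component $Q$ is not contained in $\partial\R_{g,\ell}$; hence $W_g^f\cap Q=\R_{g,\ell}\cap Q=Q\setminus\partial\R_{g,\ell}$ is a non-empty open substack of the irreducible $Q$, which proves~(2). Being dense, it has the same dimension as $Q$, so $\dim Q=\dim(W_g^f\cap Q)\le\dim W_g^f=2g-3+f$ by (A), and with (C) this forces $\dim Q=2g-3+f$, proving~(1). For~(3), I would first note that $Q\cap\partial\R_{g,\ell}\neq\emptyset$: otherwise $Q\subseteq\R_{g,\ell}$ would be a complete substack of dimension $2g-3+f>g-2$ mapping finitely onto a complete subvariety of $\M_g$ of the same dimension, contradicting Diaz's bound on complete subvarieties of $\M_g$ (alternatively, one may quote from \cite{AP:08} that the closure of each component of $\M_g^f$ meets $\partial\M_g$). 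Then, since $\bar\R_{g,\ell}$ is smooth, $\partial\R_{g,\ell}$ has codimension $1$, and $Q$ has codimension $g-f$, Lemma~\ref{Lintersect} shows every component of $Q\cap\partial\R_{g,\ell}$ has codimension at most $g-f+1$, hence dimension at least $2g-4+f$; and each such component, being a proper closed irreducible substack of the irreducible $Q$, has dimension at most $\dim Q-1=2g-4+f$, so exactly $2g-4+f$.

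The bulk of the argument is formal manipulation of the finite morphism $\Pi_\ell$ and of divisors on the smooth stack $\bar\R_{g,\ell}$; the two non-formal inputs are estimate (C)---the purity lower bound, which is precisely what forbids $\bar W_g^f$ from having an extra component concealed in the boundary---and the boundary dimension count in (B), which rests on the description of $\partial\R_{g,\ell}$ from \cite{F_generalL}. I expect the main point to get right is the correct application of purity (C) in the compactified, stacky setting, together with checking that the clutching morphisms appearing in (B) are finite onto their images so that their images have the asserted dimension.
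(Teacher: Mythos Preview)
Your proof is correct and follows essentially the same route as the paper: both arguments reduce everything to the corresponding facts about $\bar\M_g^f$ via the finite, flat, surjective morphism $\Pi_\ell$. The paper's proof is simply the one-line observation that parts (1)--(3) transfer verbatim from the known results for $\bar\M_g^f$ (citing \cite[Theorem~2.3]{FVdG} and \cite[Lemmas~3.1, 3.2(a)]{AP:08}), whereas you have unpacked the content of those cited lemmas---the purity lower bound (your (C)) and the boundary dimension count (your (B))---rather than quoting them.
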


\begin{proof}
Since $\Pi_\ell: \bar {\mathcal R}_{g, \ell} \rightarrow \bar {\mathcal M}_g$ is finite, flat, and surjective, these facts follow from the 
analogous facts for $\bar {\mathcal M}_g^f$; see \cite[Theorem 2.3]{FVdG} for part (1) and \cite[Lemmas 3.1, 3.2(a)]{AP:08} 
for parts (2) - (3).
\end{proof}

\subsection{Boundary components of $\bar{{\mathcal R}}_{g, \ell}$}

Let $\partial {\mathcal R}_{g, \ell}=\bar{{\mathcal R}}_{g, \ell} - {\mathcal R}_{g, \ell}$ denote the boundary of 
${\mathcal R}_{g, \ell}$.
Informally, the points of $\partial {\mathcal R}_{g, \ell}$ represent
unramified ${\mathbb Z}/\ell$-covers of singular curves, although we make this more precise below.
For covers of singular curves of compact type,
the boundary components lie above $\Delta_i[\bar {\mathcal M}_g]$ for some $1 \leq i \leq \lfloor g/2 \rfloor$ and
are denoted $\Delta_{i:g-i}$, $\Delta_i$, and $\Delta_{g-i}$.
For covers of singular curves of non-compact type, 
the boundary components lie above $\Delta_0[\bar {\mathcal M}_g]$ and
are denoted $\Delta_{0,I}$, $\Delta_{0,II}$ and $\Delta_{0,III}^{(a)}$.

In Sections \ref{compacttype} and \ref{Snoncompact}, we recall the definition of these boundary components
and investigate them in terms of the $p$-rank.  
Before doing this, recall the following results.

\begin{proposition} \label{Pdivisor1}  \cite[Equation (16)]{F_generalL}
For $1 \leq i < \lfloor g/2 \rfloor$, there is an equality of divisors 
\[\Pi_\ell^*(\Delta_i[\bar {\mathcal M}_g])=\Delta_i[\bar {\mathcal R}_{g,\ell}] + \Delta_{g-i}[\bar {\mathcal R}_{g,\ell}]+\Delta_{i:g-i}[\bar {\mathcal R}_{g,\ell}].\]
If $g$ is even, there is an equality of divisors 
\[\Pi_\ell^*(\Delta_{g/2}[\bar {\mathcal M}_g])=\Delta_{g/2}[\bar {\mathcal R}_{g,\ell}] +\Delta_{g/2:g/2}[\bar {\mathcal R}_{g,\ell}].\]
\end{proposition}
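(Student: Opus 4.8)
This proposition is quoted from \cite[Equation (16)]{F_generalL}, so I only sketch how one would establish it directly from the description of $\bar{\R}_{g,\ell}$ and its boundary in \cite{CF}. \textbf{Set-theoretic analysis.} The generic point of $\Delta_i[\bar\M_g]$ represents a compact-type stable curve $X = X_1 \cup_p X_2$, with $X_1$ smooth of genus $i$, $X_2$ smooth of genus $g-i$, meeting transversally at a point $p$. Because $X$ has compact type, $\Pic^0(X) \cong \Pic^0(X_1) \times \Pic^0(X_2)$ is an abelian variety with no torus part, so the limit of a family of $\ell$-torsion line bundles over $X$ is again an honest $\ell$-torsion line bundle and no ramification is introduced at the separating node; in particular the fiber of $\Pi_\ell$ over $[X]$ is in bijection with the set of nonzero elements of $\Pic^0(X_1)[\ell] \times \Pic^0(X_2)[\ell]$. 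Writing such an element as $(\eta_1, \eta_2)$ and using that $\ell$ is prime (so each $\eta_j$ is either $0$ or of exact order $\ell$), this fiber splits into three families according to the support of $\eta$: (a) $\eta_1 \neq 0 = \eta_2$; (b) $\eta_1 = 0 \neq \eta_2$; (c) $\eta_1 \neq 0 \neq \eta_2$. I would identify the closures of these loci with the three boundary divisors $\Delta_i[\bar\R_{g,\ell}]$, $\Delta_{g-i}[\bar\R_{g,\ell}]$, $\Delta_{i:g-i}[\bar\R_{g,\ell}]$ of \cite{F_generalL} (in the appropriate labeling); in case (c) the cover $Y \to X$ is connected, with $\pi^{-1}(p)$ a single $\ZZ/\ell$-orbit of $\ell$ nodes and $Y$ of non-compact type, which is exactly the geometric picture defining $\Delta_{i:g-i}$.

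\textbf{Multiplicities.} Since $\Pi_\ell : \bar\R_{g,\ell} \to \bar\M_g$ is finite and flat of degree $\ell^{2g}-1$, and since over the generic point of $\Delta_i[\bar\M_g]$ it is moreover unramified (the relative $\ell$-torsion of the Jacobian is an \'etale group scheme there), each of the three components occurs with multiplicity one in the divisor $\Pi_\ell^*(\Delta_i[\bar\M_g])$. Alternatively one can bypass a discussion of \'etaleness at the boundary and count degrees: the components $\Delta_i[\bar\R_{g,\ell}]$, $\Delta_{g-i}[\bar\R_{g,\ell}]$, $\Delta_{i:g-i}[\bar\R_{g,\ell}]$ have degrees $\ell^{2i}-1$, $\ell^{2(g-i)}-1$, $(\ell^{2i}-1)(\ell^{2(g-i)}-1)$ over $\Delta_i[\bar\M_g]$, and since $(\ell^{2i}-1) + (\ell^{2(g-i)}-1) + (\ell^{2i}-1)(\ell^{2(g-i)}-1) = \ell^{2g}-1 = \deg \Pi_\ell$, all three multiplicities in the pullback divisor are forced to equal one.

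\textbf{The case $i = g/2$ and the main obstacle.} When $g$ is even and $i = g/2$, the subcurves $X_1$ and $X_2$ have equal genus and are interchangeable, so the involution of $\Delta_{g/2}[\bar\M_g]$ swapping them identifies families (a) and (b); they descend to a single divisor $\Delta_{g/2}[\bar\R_{g,\ell}]$, whereas family (c) is preserved and yields $\Delta_{g/2:g/2}[\bar\R_{g,\ell}]$, giving $\Pi_\ell^*(\Delta_{g/2}[\bar\M_g]) = \Delta_{g/2}[\bar\R_{g,\ell}] + \Delta_{g/2:g/2}[\bar\R_{g,\ell}]$. The only genuinely delicate step is the bookkeeping in the first paragraph: matching the three families precisely with the boundary divisors as they are defined via twisted curves in \cite{CF}, confirming that no stacky $\mu_r$-structure intervenes over the compact-type locus $\Delta_i[\bar\M_g]$, and checking that $\bar\R_{g,\ell}$ acquires no further components over this locus even though the cover $Y$ becomes non-compact type along $\Delta_{i:g-i}$. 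Once this dictionary is in place, the multiplicity-one statement and the $i = g/2$ collapse are formal.
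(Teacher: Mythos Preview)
The paper does not prove this proposition at all: it is simply quoted from \cite[Equation (16)]{F_generalL} with no accompanying argument. Your sketch is a correct and self-contained direct verification, so there is nothing in the paper to compare against beyond noting that the authors defer entirely to the reference.

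Your argument is sound. The set-theoretic decomposition matches the paper's own description of the boundary in Section~\ref{compacttype}, and both of your multiplicity arguments work; the degree count is particularly clean once you know these three loci exhaust the preimage (which, as you correctly flag, is the one point requiring care via the twisted-curve description in \cite{CF}). One small remark: your labeling of cases (a) and (b) as $\Delta_i$ and $\Delta_{g-i}$ is the opposite of the convention the paper adopts in Section~\ref{Sdeltai} (where $\Delta_i$ has $\eta_{C_1}$ nontrivial and $\eta_{C_2}$ trivial), but you already hedged with ``in the appropriate labeling,'' and the paper's own text in Section~\ref{compacttype} contains the same ambiguity, so this is not a defect in your argument.
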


\begin{proposition} \label{Pdivisor2} \cite[page 89]{F_generalL} or \cite[Equation (17)]{CF}
There is an equality of divisors 
\[\Pi_{\ell}^*(\Delta_0[\bar {\mathcal M}_g])=\Delta_{0,I}[\bar {\mathcal R}_{g,\ell}] + \Delta_{0,II}[\bar {\mathcal R}_{g,\ell}] + 
\ell \sum_{a=0}^{\lfloor \ell/2 \rfloor} \Delta_{0,III}^{(a)}[\bar {\mathcal R}_{g,\ell}].\]
\end{proposition}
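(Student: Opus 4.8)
The plan is to prove the equality of divisors in two stages: first identify, with their reduced structures, the irreducible components of $\Pi_\ell^{-1}(\Delta_0[\bar\M_g])$, and then compute, for each such component $D$, the multiplicity $m_D$ with which it occurs in the Cartier divisor $\Pi_\ell^*(\Delta_0[\bar\M_g])$. Since $\bar\R_{g,\ell}$ and $\bar\M_g$ are smooth Deligne--Mumford stacks and $\Pi_\ell$ is finite, $m_D$ equals the ramification index of $\Pi_\ell$ along $D$, i.e.\ the order of vanishing along $D$ of the pullback of a local equation for $\Delta_0[\bar\M_g]$, which I would read off from the versal deformation of the level-$\ell$ twisted curve represented by a generic point of $D$, taken transverse to $D$. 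The contrast with Proposition \ref{Pdivisor1} is instructive: over $\Delta_i$ with $i\geq 1$ the node is separating, so a level-$\ell$ structure never needs a twist there and every component appears with multiplicity $1$; over $\Delta_0$ the node is non-disconnecting and a level-$\ell$ structure can be ``ramified'' at it, which is exactly what produces the factor $\ell$.

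For the first stage, recall from \cite{CF} that a generic point of $\Delta_0[\bar\M_g]$ represents a stable curve $X=C/(p\sim q)$ with $(C;p,q)$ a general $2$-pointed curve of genus $g-1$. A level-$\ell$ twisted curve $[\texttt{X},\eta,\phi]$ with coarse space $X$ falls into one of three kinds at the node: (I) the node of $\texttt{X}$ is a scheme node and $\eta$ pulls back to a non-trivial $\ell$-torsion line bundle on $C$; (II) the node is a scheme node and $\eta$ pulls back to $\mathcal{O}_C$, so that $\eta$ is encoded by a non-trivial gluing datum in $\mu_\ell$ along $\{p,q\}$; (III) the node of $\texttt{X}$ is a $\mu_\ell$-twisted node and $\eta$ has local type $(a,\ell-a)$ on its two branches. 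These loci are the boundary divisors $\Delta_{0,I}$, $\Delta_{0,II}$, and $\Delta_{0,III}^{(a)}$; that each is irreducible of codimension one, and that the twisted type ranges over $0\le a\le\lfloor\ell/2\rfloor$ with $a$ and $\ell-a$ identified by exchanging the two branches, follows from the irreducibility of $\Delta_0[\bar\M_g]$ together with a transitivity statement for the monodromy action on the relevant finite set of level structures. This stage is purely a statement about $\bar\R_{g,\ell}$, uses no $p$-rank input, and I would simply quote it from \cite{F_generalL} and \cite{CF}.

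For the second stage, near a generic point of $\Delta_0[\bar\M_g]$ a smoothing parameter $s$ of the node serves as a local equation. If $D=\Delta_{0,I}$ or $D=\Delta_{0,II}$, the relevant node of $\texttt{X}$ is an ordinary scheme node, the deformation transverse to $D$ is the one-parameter smoothing $\{xy=t\}$ along which $\eta$ and $\phi$ deform freely (for type I because ``$\eta$ is $\ell$-torsion'' is an open condition, for type II because the gluing datum persists), and $\Pi_\ell^*(s)=(\mathrm{unit})\cdot t$, so $m_D=1$. The one genuinely new computation is at $D=\Delta_{0,III}^{(a)}$, where the node of $\texttt{X}$ has local picture $[\{xy=0\}/\mu_\ell]$ with $\zeta\cdot(x,y)=(\zeta x,\zeta^{-1}y)$. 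Its versal smoothing is $[\{xy=t\}/\mu_\ell]$; passing to $\mu_\ell$-invariants $u=x^\ell$, $v=y^\ell$ gives the smoothing $uv=t^\ell$ of the coarse space, so the coarse smoothing parameter is $s=t^\ell$. Checking that $\phi\colon\eta^{\otimes\ell}\to\mathcal{O}_{\texttt{X}}$ imposes no further constraint cutting $t$ down, one gets that $t$ is a coordinate transverse to $\Delta_{0,III}^{(a)}$ with $\Pi_\ell^*(s)=(\mathrm{unit})\cdot t^\ell$, hence $m_D=\ell$ for each admissible $a$. Combining the two stages gives $\Pi_\ell^*(\Delta_0[\bar\M_g])=\Delta_{0,I}+\Delta_{0,II}+\ell\sum_{a=0}^{\lfloor\ell/2\rfloor}\Delta_{0,III}^{(a)}$.

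The step I expect to be the main obstacle is the local deformation theory at the twisted node: making rigorous that the versal deformation of $[\texttt{X},\eta,\phi]$ near $\Delta_{0,III}^{(a)}$ restricts to the one-parameter family $\{xy=t\}$ compatibly with $\eta$ and $\phi$, and that $\Pi_\ell$ is given in these coordinates by $t\mapsto t^\ell$. This requires the explicit local charts of the stack of level-$\ell$ twisted curves at a boundary point together with the description of $\Pi_\ell$ on those charts, all of which is in \cite{CF}. The secondary, bookkeeping-type obstacle is verifying the completeness of the list of components over $\Delta_0$ and the exact range of the twisted type $a$ (that branch-swap is the only identification among the type III loci), which is again contained in \cite{F_generalL} and \cite{CF}.
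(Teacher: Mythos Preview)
The paper does not give its own proof of this proposition: it is stated as a quotation from \cite[page 14]{F_generalL} and \cite[Equation (17)]{CF}, with no argument supplied. So there is no ``paper's proof'' to compare against; the authors simply import the formula from the literature and use it as a black box (and in fact only the later parts of the paper use this structural input, not any details of its proof).

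That said, your sketch is exactly the argument those references carry out. The two-stage plan---enumerate the irreducible components of $\Pi_\ell^{-1}(\Delta_0)$ via the trichotomy I/II/III at the node, then read off multiplicities as ramification indices of $\Pi_\ell$ from the versal deformation---is the standard one, and your identification of the key local computation (the $\mu_\ell$-twisted node has smoothing parameter $t$ on $\bar\R_{g,\ell}$ mapping to $s=t^\ell$ on $\bar\M_g$, giving multiplicity $\ell$) is correct and is precisely what \cite{CF} establishes. One minor bookkeeping point: the index set for the type~III components is written in the paper's proposition as $0\le a\le\lfloor\ell/2\rfloor$, while the later description of $\Delta_{0,III}^{(a)}$ uses $1\le a\le\ell-1$; the discrepancy is a labeling artifact (identifying $a$ with $\ell-a$ by branch-swap), and you handle it correctly, but it is worth noting if you intend to write this up carefully.
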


\subsection{Pryms of covers of singular curves of compact type} \label{compacttype}

Let $X$ be a singular curve formed by intersecting two curves $C_1$ and $C_2$ 
(at points $x_1 \in C_1$ and $x_2 \in C_2$) in an ordinary double point.
By \eqref{eqBLR}, $J_X \backsimeq J_{C_1} \oplus J_{C_2}$.
Let $\xi$ be the point of $\Delta_i[\bar {\mathcal M}_g]$ representing $X$.
Then an unramified cyclic degree $\ell$ cover $\pi:Y \to X$ is determined by two line bundles 
$\eta_{C_1} \in {\rm Pic}^0(C_1)[\ell]$ and $\eta_{C_2} \in {\rm Pic}^0(C_2)[\ell]$, which are not both trivial. 
The points of $\Delta_{i: g-i}[\bar {\mathcal R}_{g,\ell}]$ above $\xi$ represent covers $\pi$
for which both $\eta_{C_1}$ and $\eta_{C_2}$ are nontrivial;
the points of $\Delta_i[\bar {\mathcal R}_{g,\ell}]$ (resp.\ $\Delta_{g-i}[\bar {\mathcal R}_{g,\ell}]$) above 
$\xi$ represent covers $\pi$ for which $\eta_{C_1}$ (resp.\ $\eta_{C_2}$) is trivial.

\subsubsection{The boundary component $\Delta_{i:g-i}$} \label{SDeltaig-i}

The boundary divisor $\Delta_{i:g-i}[\bar {\mathcal R}_{g, \ell}]$ is the image of the clutching map
\[\kappa_{i:g-i}: \bar {\mathcal R}_{i,\ell;1} \times \bar {\mathcal R}_{g-i,\ell; 1} \rightarrow \bar {\mathcal R}_{g,\ell},\]
defined on a generic point as follows.  
Let $\tau_1$ be a point of $\bar {\mathcal R}_{i,\ell;1}$ 
representing $(\pi_1:C'_1 \to C_1, x'_1 \mapsto x_1)$
and let $\tau_2$ be a point of $\bar {\mathcal R}_{g-i,\ell;1}$ representing $(\pi_2:C'_2 \to C_2, x'_2 \mapsto x_2)$.
Let $Y$ be the curve with components $C'_1$ and $C'_2$, formed by identifying
$\sigma^k(x'_1)$ and $\sigma^k(x'_2)$ in an ordinary double point for $0 \leq k \leq \ell-1$.
Then $\kappa_{i:g-i}(\tau_1, \tau_2)$ is the point representing the unramified ${\mathbb Z}/\ell$-cover $Y \to X$. 
This is illustrated in Figure \ref{fig:Deltagi} for $\ell=2$. 

\begin{figure}[h]
\centering
\includegraphics[scale=0.5]{./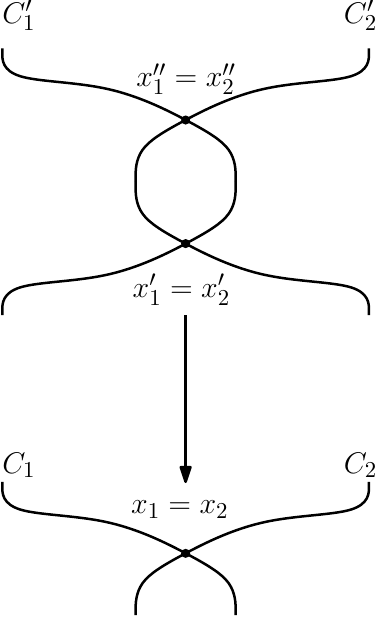}
\caption{$\Delta_{i:g-i}: \eta_{C_1} \ncong \mathcal O_{C_1}, \ \eta_{C_2} \ncong \mathcal O_{C_2}$}
\label{fig:Deltagi}
\end{figure}

\begin{lemma}\label{dimig-i} The clutching map $\kappa_{i:g-i}$ restricts to a map
\[\kappa_{i:g-i}: \bar W_{i;1}^{f_1} \times \bar W_{g-i;1}^{f_2} \rightarrow \Delta_{i:g-i}[\bar W_g^{f_1+f_2}].\]
\end{lemma}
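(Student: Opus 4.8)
The plan is to track how the $p$-rank behaves under the clutching construction defining $\kappa_{i:g-i}$, and then combine this with the dimension count already available. The statement has two parts: (a) that $\kappa_{i:g-i}$ carries the product of $p$-rank strata $\bar W_{i;1}^{f_1} \times \bar W_{g-i;1}^{f_2}$ into the $p$-rank stratum $\bar W_g^{f_1+f_2}$, i.e.\ a statement purely about $p$-ranks; and (b) implicitly, that the target really is (a dense subset of) $\Delta_{i:g-i}[\bar W_g^{f_1+f_2}]$, which is mostly a matter of unwinding definitions using Proposition \ref{Pdivisor1} and the description in Section \ref{compacttype}.

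First I would set up notation: take a point $(\tau_1,\tau_2)$ in $\bar W_{i;1}^{f_1}\times \bar W_{g-i;1}^{f_2}$, representing marked covers $(\pi_1: C_1' \to C_1, x_1'\mapsto x_1)$ and $(\pi_2: C_2'\to C_2, x_2'\mapsto x_2)$ with $C_1$ of genus $i$ and $p$-rank $f_1$ and $C_2$ of genus $g-i$ and $p$-rank $f_2$ (so that $X = C_1 \cup_{x_1=x_2} C_2$ has genus $g$ and $p$-rank $f_1+f_2$ by \eqref{eqBLR}, since $X$ is of compact type). The clutched curve $Y$ has components $C_1'$ and $C_2'$ identified at the $\ell$ pairs of points $\sigma^k(x_1')\sim\sigma^k(x_2')$; its dual graph $\Gamma_Y$ is a cycle on $2\ell$ vertices... actually on the $\ell$ gluing points it is a graph with $\ell$ edges joining the two "super-vertices" $C_1'$ and $C_2'$, hence $H^1(\Gamma_Y,\ZZ)$ has rank $\ell-1$. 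Thus $\mathrm{Pic}^0(Y)$ is a semi-abelian variety which is an extension of $J_{C_1'}\oplus J_{C_2'}$ by a torus $T$ of rank $\ell-1$, so by the additivity of $p$-rank recalled in the excerpt, $f(Y) = f(C_1') + f(C_2') + (\ell-1)$. On the other hand $f(C_j') = f_j + f(P_{\pi_j})$ and $f(P_{\pi_j}) = f(C_j') - f_j$; I would then use the general fact (already in the excerpt: the $p$-rank of $Y$ equals $f + f'$ where $f$ is the $p$-rank of $X$ and $f'$ that of the Prym) together with $f(X) = f_1 + f_2$ to conclude $f(P_\pi) = f(Y) - f(X) = f(P_{\pi_1}) + f(P_{\pi_2}) + (\ell-1)$. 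The key input is just the exact sequence \eqref{eqBLR} for $Y$ and for $X$, the torus-rank computation from the dual graph, and additivity of the $p$-rank.

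The main point for part (a) of the statement as written, though, is cleaner than that identity: we just need that the image lands in $\bar W_g^{f_1+f_2}$, i.e.\ that $f(X) = f_1+f_2$. That is immediate from \eqref{eqBLR} applied to the compact-type curve $X=C_1\cup C_2$, since then the torus is trivial and $J_X \sim J_{C_1}\oplus J_{C_2}$, so $f(X) = f(C_1)+f(C_2)$; and $f(C_j) = f_j$ because $\tau_j \in \bar W_{i;1}^{f_j}$ (resp.\ $\bar W_{g-i;1}^{f_2}$) means precisely that the base curve has $p$-rank $f_j$. Combined with the fact that $\Delta_{i:g-i}[\bar\R_{g,\ell}] \subseteq \Pi_\ell^{-1}(\Delta_i[\bar\M_g])$ from Proposition \ref{Pdivisor1}, and that $\kappa_{i:g-i}$ is compatible with the clutching map $\kappa_i$ on $\bar\M_g$ (i.e.\ $\Pi_\ell \circ \kappa_{i:g-i}$ factors through $\kappa_i$), the image of $\bar W_{i;1}^{f_1}\times \bar W_{g-i;1}^{f_2}$ lands in $\Delta_{i:g-i}[\bar\R_{g,\ell}] \cap \Pi_\ell^{-1}(\bar\M_g^{f_1+f_2}) = \Delta_{i:g-i}[\bar W_g^{f_1+f_2}]$, which is the assertion.

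The step I expect to be the main obstacle is bookkeeping the compatibility of $\kappa_{i:g-i}$ with the coarse clutching map $\kappa_i$ on $\bar\M_g$ at the level of $p$-rank strata — that is, making precise that the underlying stable curve of the twisted curve $\kappa_{i:g-i}(\tau_1,\tau_2)$ is the clutching $C_1 \cup_{x_1=x_2} C_2$ of the underlying curves, so that one really may invoke \eqref{eqBLR} and read off $f(X)=f_1+f_2$. This requires care because points of $\partial\R_{g,\ell}$ need not correspond to honest $\ZZ/\ell$-covers of scheme-theoretic curves, so one has to argue at the level of twisted curves and their coarse spaces, or simply restrict to the generic (interior) locus where everything is a genuine cover and invoke density; the latter suffices here since the strata in question are reduced and we only need the set-theoretic image statement. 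Everything else is a direct application of the additivity of $p$-rank for semi-abelian varieties and Proposition \ref{Pdivisor1}.
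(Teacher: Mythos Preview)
Your proposal is correct and follows the same approach as the paper, which proves the lemma in one line by citing \eqref{jacisoF}: since $\bar W_g^f = \Pi_\ell^{-1}(\bar\M_g^f)$ and $\Pi_\ell\circ\kappa_{i:g-i}$ factors through the clutching $\kappa_i$ on $\bar\M_g$, the statement reduces immediately to the fact that $\kappa_i$ sends $\bar\M_{i;1}^{f_1}\times\bar\M_{g-i;1}^{f_2}$ into $\Delta_i[\bar\M_g^{f_1+f_2}]$. Your detailed computation of $f(P_\pi)=f(P_{\pi_1})+f(P_{\pi_2})+(\ell-1)$ is not needed here; that is exactly the content of the next lemma (Lemma~\ref{LDelig-i}), and for the present statement only the $p$-rank of the base curve $X$ matters.
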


\begin{proof} This follows from \eqref{jacisoF}.
\end{proof}

\begin{lemma} \label{LDelig-i}
Suppose $\pi:Y \rightarrow X$ is an unramified ${\mathbb Z}/\ell$-cover represented by a point of 
$\Delta_{i:g-i}[\bar {\mathcal R}_{g,\ell}]$.  
Then $P_\pi$ is an extension of a semi-abelian variety $P_{\pi_1} \oplus P_{\pi_2}$ by a torus $T$ of 
rank $r_T=\ell - 1$. 
If $f_i'$ is the $p$-rank of $P_{\pi_i}$, then the $p$-rank of $P_\pi$ is $f_1'+f_2'+(\ell-1)$.
\end{lemma}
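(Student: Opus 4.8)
The plan is to reduce the statement to the structure of the Prym as a group scheme, using the exact sequence \eqref{eqBLR} for $J_Y$ together with the $\ZZ/\ell$-action. First I would recall that $Y$ has two components $C'_1$ and $C'_2$, glued at $\ell$ nodes (the orbit of the pair $(x'_1,x'_2)$ under $\sigma$). The dual graph $\Gamma_Y$ therefore consists of two vertices joined by $\ell$ edges, so $H^1(\Gamma_Y,\ZZ)$ has rank $\ell-1$; by \eqref{eqBLR} applied to $X$ (two components, one node, so $\Gamma_X$ a tree) we get $J_X \cong J_{C_1}\oplus J_{C_2}$, while for $Y$ we get a short exact sequence $1 \to T_Y \to J_Y \to J_{C'_1}\oplus J_{C'_2} \to 1$ with $T_Y$ a torus of rank $\ell-1$. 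The generator $\sigma$ of $\ZZ/\ell$ acts on all three terms compatibly.

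Next I would identify $P_\pi = \im(1-\sigma)$ inside $J_Y$ by tracking the $\sigma$-action through the sequence. On the abelian quotient $J_{C'_1}\oplus J_{C'_2}$, the action of $\sigma$ is the one coming from the deck transformations of $\pi_1$ and $\pi_2$, so $\im(1-\sigma)$ on the quotient is exactly $P_{\pi_1}\oplus P_{\pi_2}$ (using $P_{\pi_i}=\im(1-\sigma)$ on $J_{C'_i}$; note the complication that $(1-\sigma)$ applied to a subabelian variety need not be surjective onto $\im(1-\sigma)$ of the whole, but up to isogeny it is, which suffices since the $p$-rank is an isogeny invariant). On the torus $T_Y$, $\sigma$ permutes the $\ell$ edges of $\Gamma_Y$ cyclically, so it acts on $H^1(\Gamma_Y,\ZZ)\cong\ZZ^{\ell-1}$ with no nonzero invariants; hence $(1-\sigma)$ is an isogeny on $T_Y$, and $\im(1-\sigma)$ contains a torus isogenous to $T_Y$, again of rank $\ell-1$. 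Assembling, $P_\pi$ sits in an exact sequence $1 \to T \to P_\pi \to P_{\pi_1}\oplus P_{\pi_2} \to 1$ with $T$ a torus of rank $\ell-1$, i.e. $P_\pi$ is an extension of the semi-abelian variety $P_{\pi_1}\oplus P_{\pi_2}$ by $T$.

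Finally, the $p$-rank statement follows from the additivity recalled in the subsection ``The $p$-rank'': for a semi-abelian variety that is an extension of an abelian variety $A$ by a torus $T$, one has $f = f_A + \operatorname{rank}(T)$, and more generally the $p$-rank is additive in short exact sequences of semi-abelian varieties (and invariant under isogeny). Since $P_{\pi_1}\oplus P_{\pi_2}$ has $p$-rank $f'_1+f'_2$ and $T$ has rank $\ell-1$, we conclude $f_{P_\pi}=f'_1+f'_2+(\ell-1)$.

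The main obstacle I anticipate is making the middle step rigorous rather than heuristic: the functor $A \mapsto \im(1-\sigma)(A)$ does not behave exactly like a right-exact functor on the sequence \eqref{eqBLR}, so one must be careful that the induced maps $T \to P_\pi \to P_{\pi_1}\oplus P_{\pi_2}$ really form an exact sequence of semi-abelian varieties (or at least one that is isogeny-equivalent to such), and that no extra torus or abelian part creeps in from the connecting maps. The cleanest route is probably to argue on the level of $p$-divisible groups or on $H^1_{\mathrm{et}}(-,\ZZ_\ell)$ and $H^1_{\mathrm{dR}}$-type invariants where exactness is automatic and the $\sigma$-eigenspace decomposition is transparent, and then transfer back; alternatively one can cite the analogous computation for $\ell=2$ in the Prym literature and note that the cyclic case is identical edge-by-edge on $\Gamma_Y$.
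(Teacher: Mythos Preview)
Your proposal is correct in its conclusions and the computation of $\Gamma_Y$ and $r_T$ matches the paper exactly. However, the paper takes a cleaner route for the middle step that avoids precisely the obstacle you flag at the end. Instead of analyzing $(1-\sigma)$ term-by-term on the sequence for $J_Y$, the paper writes the commutative diagram
\[
\xymatrix{
1\ar[r]& 1 \ar[r]\ar[d]^{a}& J_X \ar[r]\ar[d]^{b}&J_{C_1} \oplus J_{C_2} \ar[r]\ar[d]^{c}&1\\
1\ar[r]&T\ar[r]& J_Y \ar[r]& J_{C_1'} \oplus J_{C_2'} \ar[r]&1}
\]
with vertical maps the pullbacks $\pi^*$, $\pi_1^*\oplus\pi_2^*$, and applies the snake lemma. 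This produces an honest exact sequence $1\to\mathrm{Coker}(a)\to\mathrm{Coker}(b)\to\mathrm{Coker}(c)\to 1$, i.e.\ $1\to T\to P_\pi\to P_{\pi_1}\oplus P_{\pi_2}\to 1$, without ever treating $A\mapsto\im(1-\sigma)(A)$ as a functor on short exact sequences.

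What each approach buys: your direct analysis of the $\sigma$-action on the torus (via the cyclic permutation of the $\ell$ edges) is more explicit and makes it transparent why the full torus survives into $P_\pi$; but it costs you the exactness bookkeeping you describe. The paper's snake-lemma argument hides that computation inside the identification $\mathrm{Coker}(a)=T/1=T$, and the exactness comes for free from the lemma. Both arguments implicitly use that $J_Y/\pi^*J_X$ and $\im(1-\sigma)$ agree (up to isogeny, which is all that matters for $p$-rank), so neither is dramatically more rigorous than the other; the snake lemma is simply shorter.
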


\begin{proof}
By \eqref{eqBLR}, $J_Y$ is an extension of $J_{C'_1} \oplus J_{C'_2}$ by a torus $T$
whose rank $r_T$ is the rank of $H^1(\Gamma_Y,\mathbb Z)$. 
Then $r_T=\ell-1$ since $\Gamma_Y$ consists of two vertices, 
for the two irreducible components $C_1', C_2'$, 
which are connected with $\ell$ edges, 
corresponding to the $\ell$ intersection points.
There is a commutative diagram with exact rows
\[\xymatrix{
      1\ar[r]& 1 \ar[r]\ar[d]^{a}& J_X \ar[r]\ar[d]^{b}&J_{C_1} \oplus J_{C_2} \ar[r]\ar[d]^{c}&1\\
      1\ar[r]&T\ar[r]& J_Y \ar[r]& J_{C_1'} \oplus J_{C_2'} \ar[r]&1.}\]
By the snake lemma, there is an exact sequence
\[1 \to {\rm Coker}(a)  \to {\rm Coker}(b)  \to {\rm Coker}(c) \to 1,\] 
and thus an exact sequence
\begin{equation} \label{Esnake}
1 \to T \to P_\pi \to P_{\pi_1} \oplus P_{\pi_2} \to 1.
\end{equation}
From \eqref{Esnake},
$f_{P_\pi}=f_{P_{\pi_1} \oplus P_{\pi_2}} + {\rm rank}(T) = f'_1+f'_2+(\ell-1)$.
\end{proof}

\subsubsection{The boundary component $\Delta_i, i>0$:} \label{Sdeltai}
The boundary divisor $\Delta_i[\bar {\mathcal R}_g]$ is the image of the clutching map
\[\kappa_i: \bar {\mathcal R}_{i,\ell;1} \times \bar {\mathcal M}_{g-i; 1} \rightarrow \bar {\mathcal R}_{g,\ell},\]
defined on a generic point as follows.  
Let $\tau$ be a point of ${\mathcal R}_{i,\ell;1}$ representing $(\pi_1':C'_1 \to C_1,x' \mapsto x)$
and let $\omega$ be a point of ${\mathcal M}_{g-i; 1}$ representing $(C_2,x_2)$.
Let $Y$ be the curve with components $C'_1$ and $\ell$ copies of $C_2$, formed by identifying the point $x_2$ 
on each copy of $C_2$ with a point of $\pi_1^{-1}(x_1)$.
Then $\kappa_i(\tau, \omega)$ represents the unramified ${\mathbb Z}/\ell$-cover $\pi:Y \to X$.   
See Figure \ref{fig:Deltai} for $\ell=2$.

\begin{figure}[h]
\centering
\includegraphics[scale=0.5]{./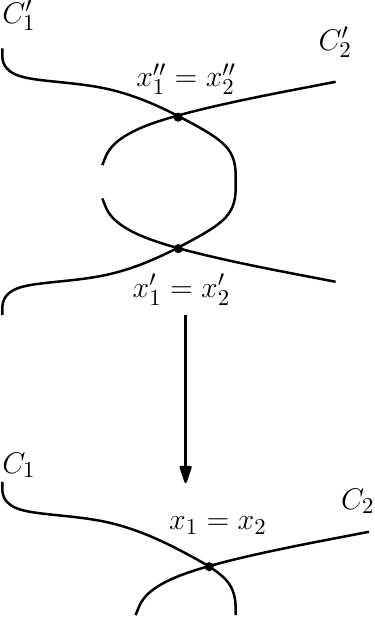}
\caption{$\Delta_{i}:\eta_{C_1} \not \simeq {\mathcal O}_{C_1}, \eta_{C_2} \simeq {\mathcal O}_{C_2}$}
\label{fig:Deltai}
\end{figure}

\begin{lemma}\label{dimi}
The clutching map $\kappa_i$ restricts to a map:
 \[\kappa_i: \bar W_{i;1}^{f_1} \times \bar {\mathcal M}_{g-i; 1}^{f_2} \rightarrow \Delta_i[\bar W_g^{f_1+f_2}].\]
\end{lemma}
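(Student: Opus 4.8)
This is the $\Delta_i$-analogue of Lemma~\ref{dimig-i}, and the plan is to prove it in the same way: reduce to the clutching morphism \eqref{jacisoF} on $\bar\M_g$ by exploiting the compatibility of $\kappa_i$ with the forgetful morphism $\Pi_\ell$. The starting observation is that, unwinding the construction in Section~\ref{Sdeltai}, if $\tau\in\bar\R_{i,\ell;1}$ represents $(\pi_1':C_1'\to C_1,\,x'\mapsto x)$ and $\omega\in\bar\M_{g-i;1}$ represents $(C_2,x_2)$, then the stable curve underlying $\kappa_i(\tau,\omega)$ is exactly $X=C_1\cup_{x_1=x_2}C_2$: gluing the $\ell$ copies of $C_2$ onto the fiber $\pi_1^{-1}(x_1)$ builds the total space $Y$ but leaves the base curve $X$ untouched. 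Consequently there is a commutative square
\[
\Pi_\ell\circ\kappa_i \;=\; \kappa_i^{\M}\circ\bigl(\Pi_\ell\times\mathrm{id}_{\bar\M_{g-i;1}}\bigr),
\]
where $\kappa_i^{\M}\colon\bar\M_{i;1}\times\bar\M_{g-i;1}\to\Delta_i[\bar\M_g]$ denotes the clutching morphism of \eqref{jacisoF}. This identity can be checked on generic points and then extended by density, or read off directly from the modular descriptions of the two clutching morphisms.

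With this in hand, the remaining steps are bookkeeping. First I would note that, by definition, $\bar W_{i;1}^{f_1}=\Pi_\ell^{-1}(\bar\M_{i;1}^{f_1})$ and $\bar W_g^{f_1+f_2}=\Pi_\ell^{-1}(\bar\M_g^{f_1+f_2})$, so the relevant $p$-rank strata on the $\bar\R$-side are pulled back from those on $\bar\M$. Next, I would invoke \eqref{jacisoF}, which asserts that $\kappa_i^{\M}$ sends $\bar\M_{i;1}^{f_1}\times\bar\M_{g-i;1}^{f_2}$ into $\Delta_i[\bar\M_g^{f_1+f_2}]$; this in turn is just additivity of the $p$-rank under the compact-type decomposition $J_X\simeq J_{C_1}\oplus J_{C_2}$ coming from \eqref{eqBLR}, since the dual graph of $X$ is a tree and so carries no torus. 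Finally, I would combine these: $\kappa_i\bigl(\bar W_{i;1}^{f_1}\times\bar\M_{g-i;1}^{f_2}\bigr)$ is contained in $\Delta_i[\bar\R_{g,\ell}]$ because $\Delta_i[\bar\R_{g,\ell}]$ is the image of $\kappa_i$, and it is contained in $\Pi_\ell^{-1}\bigl(\Delta_i[\bar\M_g^{f_1+f_2}]\bigr)$ by the commutative square together with the first two observations; intersecting, it lies in $\Delta_i[\bar\R_{g,\ell}]\cap\Pi_\ell^{-1}(\bar\M_g^{f_1+f_2})=\Delta_i[\bar W_g^{f_1+f_2}]$, which is the claimed statement.

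There is no real obstacle here — like Lemma~\ref{dimig-i}, this follows from \eqref{jacisoF}. The only point that merits a line of justification is the commutative square in the first step, i.e.\ the fact that the construction of $\kappa_i$ in Section~\ref{Sdeltai} modifies the total space but not the base curve of genus $g$; it might also be worth remarking in passing that $Y$ is itself of compact type, its dual graph being a star centered at $C_1'$, although this is not needed, since $\bar W_g^{f_1+f_2}$ is stratified by the $p$-rank of $X$ alone.
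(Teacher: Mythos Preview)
Your proposal is correct and takes essentially the same approach as the paper: the paper's proof is the single line ``This follows from \eqref{jacisoF},'' and your argument is a careful unpacking of exactly that deduction via the compatibility of $\kappa_i$ with $\Pi_\ell$.
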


\begin{proof}
This follows from \eqref{jacisoF}. 
\end{proof}

\begin{lemma} \label{LDeli}
Suppose $\pi:Y \rightarrow X$ is an unramified ${\mathbb Z}/\ell$-cover represented by a point of 
$\Delta_{i}[\bar {\mathcal R}_{g,\ell}]$.  
Then $P_\pi \simeq P_{\pi_1} \oplus J_{C_2}^{\ell-1}$. 
If $f'_1$ is the $p$-rank of $P_{\pi_1}
$, then the $p$-rank of $P_\pi$ is $f'_1+ (\ell-1)f_2$.
\end{lemma}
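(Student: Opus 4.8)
The plan is to mimic the structure of the proof of Lemma \ref{LDelig-i}, but to track more carefully what happens when one of the line bundles is trivial. Recall the setup: the point of $\Delta_i[\bar \R_{g,\ell}]$ represents the cover $\pi: Y \to X$ where $X = C_1 \cup_{x_1 \sim x_2} C_2$, the line bundle $\eta_{C_1}$ on $C_1$ is nontrivial (giving $\pi_1: C_1' \to C_1$), and $\eta_{C_2} \cong \mathcal O_{C_2}$. Because $\eta_{C_2}$ is trivial, the restriction of the cover to $C_2$ is split, so $Y$ is obtained from $C_1'$ by attaching $\ell$ disjoint copies of $C_2$, one copy glued to each of the $\ell$ points in the fiber $\pi_1^{-1}(x_1)$; the generator $\sigma$ of $\ZZ/\ell$ permutes the $\ell$ copies of $C_2$ cyclically (compatibly with its action on the fiber).

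First I would record the dual graph $\Gamma_Y$: it is a tree (one central vertex $C_1'$ with $\ell$ leaves, the copies of $C_2$), so $H^1(\Gamma_Y, \mathbb Z) = 0$ and the torus part of $J_Y$ is trivial — i.e. $Y$ is of compact type, as one expects since $X$ is of compact type and $\pi$ is unramified. Hence by \eqref{eqBLR},
\[
J_Y \;\simeq\; J_{C_1'} \oplus J_{C_2}^{\oplus \ell},
\]
where the $\ell$ copies of $J_{C_2}$ are permuted cyclically by $\sigma$, and $\sigma$ acts on the $J_{C_1'}$ factor through the deck transformation of $\pi_1$. Now I would compute $P_\pi = \im(1-\sigma)$ on this decomposition, treating the two summands separately. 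On the $J_{C_1'}$ factor, $\im(1-\sigma)$ is by definition $P_{\pi_1}$, the Prym of $\pi_1$. On the $J_{C_2}^{\oplus \ell}$ factor, $\sigma$ is the cyclic permutation matrix, so $1-\sigma$ acting on $J_{C_2} \otimes_{\mathbb Z} \mathbb Z[\ZZ/\ell]$ has image $J_{C_2} \otimes I$, where $I \subset \mathbb Z[\ZZ/\ell]$ is the augmentation ideal; since $I$ is free of rank $\ell-1$, this image is isogenous (in fact here isomorphic up to the usual subtleties) to $J_{C_2}^{\oplus(\ell-1)}$. Putting these together gives $P_\pi \simeq P_{\pi_1} \oplus J_{C_2}^{\ell-1}$, which is the first assertion.

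For the $p$-rank statement I would then just invoke that the $p$-rank is additive on direct sums (and invariant under isogeny), as recorded in Section \ref{Sppt}: $f_{P_\pi} = f_{P_{\pi_1}} + (\ell-1) f_{J_{C_2}} = f_1' + (\ell-1) f_2$, where $f_2$ is the $p$-rank of $C_2$. To make the argument clean and parallel to Lemma \ref{LDelig-i}, I might alternatively phrase the whole thing via a snake-lemma diagram comparing the sequence $0 \to J_X \to J_{C_1} \oplus J_{C_2} \to 0$ (there is no torus, so the left term is trivial) with $0 \to 0 \to J_Y \to J_{C_1'} \oplus J_{C_2}^{\ell} \to 0$, applying $1-\sigma$ to the vertical maps; but since there is no torus here the diagram is less essential than in the $\Delta_{i:g-i}$ case, and the direct computation of $\im(1-\sigma)$ on each factor is more transparent.

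The main obstacle — really the only delicate point — is the bookkeeping of the $\ZZ/\ell$-action on the $\ell$ copies of $C_2$ and the identification of $\im(1-\sigma)$ on $J_{C_2} \otimes \mathbb Z[\ZZ/\ell]$ with something isogenous to $J_{C_2}^{\ell-1}$. One has to be a little careful that this is an isogeny statement (the cokernel of $I \hookrightarrow \mathbb Z[\ZZ/\ell]$ being $\ZZ/\ell$, which introduces $\ell$-torsion, but $\ell \neq p$ so it does not affect the $p$-rank) rather than an equality, and to confirm that the permutation action is indeed cyclic and transitive, which follows from the fact that $\sigma$ generates $\Gal(Y/X) \cong \ZZ/\ell$ and acts transitively on the fiber $\pi_1^{-1}(x_1)$ over the gluing point. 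Everything else is formal given the results already in the excerpt.
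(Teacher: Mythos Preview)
Your proposal is correct and follows essentially the same approach as the paper: decompose $J_Y \simeq J_{C_1'} \oplus J_{C_2}^{\ell}$ (since $Y$ is of compact type), compute $\im(1-\sigma)$ factor by factor to obtain $P_{\pi_1} \oplus J_{C_2}^{\ell-1}$, and read off the $p$-rank additively. Your version is more detailed (the dual-graph justification, the augmentation-ideal bookkeeping, the isogeny-vs-isomorphism caution), but the paper's proof is the same argument compressed to three sentences.
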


\begin{proof}
By construction, $J_Y \backsimeq J_{C'_1} \oplus J_{C_2}^\ell$. 
The image of $(1-\sigma)$ on $J_Y$ is $P_\pi$, while on 
$J_{C'_1} \oplus J_{C_2}^\ell$ it is $P_{\pi_1} \oplus J_{C_2}^{\ell-1}$. 
Then the $p$-rank is the sum of the $p$-ranks of $P_{\pi_1}$ and $J_{C_2}^{\ell-1}$. 
\end{proof}

%\subsubsection{The boundary component $\Delta_{g-i}$:}\label{deltaii}
%Analogously, one can study the boundary divisor $\Delta_{g-i}[\bar {\mathcal R}_g]$, which is the image of the clutching map
%$\kappa_{g-i}: \bar {\mathcal M}_{i;1} \times \bar {\mathcal R}_{g-i,\ell;1} \rightarrow \bar {\mathcal R}_{g,\ell}$.

\subsection{Pryms of covers of singular curves of non-compact type} \label{Snoncompact}

This material is needed only for future work.
The main reference is \cite[Example 6.5]{Farkas} when $\ell=2$ 
and \cite[Section 1.4]{F_generalL} and \cite[Section 1.5.2]{CF} for general $\ell$.
Let $(X',x,y)$ be a curve of genus $g-1$ with 2 marked points.
Let $X$ be a curve of genus $g$ of non-compact type formed by identifying 
two points $x,y$ on $X'$.
By \eqref{eqBLR}, if $X'$ has $p$-rank $f_1$, then $X$ has $p$-rank $f=f_1+1$.

\subsubsection{The Boundary Component $\Delta_{0,I}$} 
The boundary divisor $\Delta_{0,I}[\bar {\mathcal R}_{g,\ell}]$ is the image of the clutching map
\[\kappa_{0, I}: \bar {\mathcal R}_{g-1,\ell;2} \rightarrow \bar {\mathcal R}_{g,\ell},\]
defined on a generic point as follows.  
Let $\tau$ be a point of ${\mathcal R}_{g-1,\ell;2}$ representing $(\pi':Y' \to X', x'\mapsto x, y'\mapsto y)$ (two markings).
Let $Y$ be the nodal curve of non-compact type with normalization $Y'$, 
formed by identifying $\sigma^k(x')$ and $\sigma^k(y')$, for $0 \leq k \leq \ell-1$, in an ordinary double point.
Then $\kappa_{0, I}(\tau)$ is the point representing the unramified ${\mathbb Z}/\ell$-cover $\pi:Y \to X$.
See Figure \ref{fig:Delta01} for the case $\ell=2$.

\begin{figure}[h]
\centering
\includegraphics[width=0.5\linewidth]{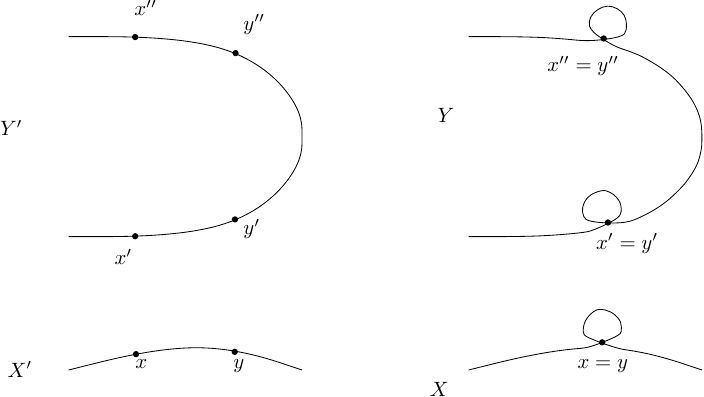}
\caption{$\Delta_{0,I}, \ell=2$}
\label{fig:Delta01}
\end{figure}

\begin{lemma}\label{bounW1}
The clutching map $\kappa_{0,I}$ restricts to a map: $\kappa_{0,I}: \bar W_{g-1;2}^{f_1} \to \Delta_{0,I}[\bar W_g^{f_1+1}]$.
\end{lemma}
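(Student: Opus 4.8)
The plan is to mimic the proof of Lemma \ref{dimig-i} and Lemma \ref{dimi}, using the clutching/boundary description already set up. Concretely, the statement to prove is that $\kappa_{0,I}$ restricts to $\kappa_{0,I}: \bar W_{g-1;2}^{f_1} \to \Delta_{0,I}[\bar W_g^{f_1+1}]$, i.e. that if a point $\tau$ representing $(\pi':Y'\to X', x'\mapsto x, y'\mapsto y)$ lies in $\bar W_{g-1;2}^{f_1}$ — so that $X'$ has $p$-rank $f_1$ — then its image $\kappa_{0,I}(\tau)$ lies in $\bar W_g^{f_1+1}$, that is, the curve $X$ obtained by gluing $x$ to $y$ on $X'$ has $p$-rank $f_1+1$.

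First I would recall that $X$ is a stable curve of non-compact type with normalization $X'$ and a single node. I would apply the short exact sequence \eqref{eqBLR} to $X$: since $\Gamma_X$ has one vertex and one loop, $H^1(\Gamma_X,\mathbb Z)$ has rank $1$, so $J_X$ is an extension
\[
1 \to \mathbb G_m \to J_X \to J_{X'} \to 1.
\]
Then I would invoke the additivity of the $p$-rank for semi-abelian varieties stated in the subsection ``The $p$-rank'': for an extension of an abelian variety $A$ by a torus $T$ one has $f_{A'} = f_A + \operatorname{rank}(T)$. Hence $f(X) = f(J_X) = f(J_{X'}) + 1 = f_1 + 1$. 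This is exactly the observation already recorded in Section \ref{Snoncompact} just before the statement (``if $X'$ has $p$-rank $f_1$, then $X$ has $p$-rank $f = f_1+1$''), so the content is really just bookkeeping. Since $\bar W_g^{f}= \Pi_\ell^{-1}(\bar\M_g^{f})$ by definition and $\Pi_\ell(\kappa_{0,I}(\tau))$ is the point of $\bar\M_g$ representing $X$, which we have just shown has $p$-rank $f_1+1$, it follows that $\kappa_{0,I}(\tau) \in \bar W_g^{f_1+1}$, and simultaneously $\kappa_{0,I}(\tau) \in \Delta_{0,I}[\bar\R_{g,\ell}]$ by definition of that boundary divisor as the image of $\kappa_{0,I}$. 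Combining these gives $\kappa_{0,I}(\tau) \in \Delta_{0,I}[\bar W_g^{f_1+1}]$, as desired.

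There is really no serious obstacle here: the only thing to be careful about is that the $p$-rank statement is about the generalized Jacobian $\operatorname{Pic}^0(X)$ of the \emph{singular} curve $X$ (which is how the $p$-rank of a stable curve was defined), not about any smooth model, and that $X$ genuinely has non-compact type so that the torus rank is $1$ rather than $0$ — both of which are immediate from the construction of $\kappa_{0,I}$ (one node, irreducible normalization). I would therefore present the proof in one or two sentences, citing \eqref{eqBLR} and the additivity of the $p$-rank under semi-abelian extensions, exactly parallel to the proofs of Lemmas \ref{dimig-i}, \ref{dimi}, and \ref{LDeli}, and noting that the $p$-rank computation was already anticipated in the paragraph preceding the statement.

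\begin{proof}
Let $\tau$ be a point of $\bar W_{g-1;2}^{f_1}$, representing a cover $(\pi':Y'\to X',\, x'\mapsto x,\, y'\mapsto y)$ with $X'$ of $p$-rank $f_1$, and let $X$ be the curve of non-compact type obtained by identifying $x$ and $y$ on $X'$. Since $\Gamma_X$ has a single vertex and a single loop, $H^1(\Gamma_X,\mathbb Z)$ has rank $1$, so by \eqref{eqBLR} the Jacobian $J_X$ is an extension of $J_{X'}$ by a torus of rank $1$. By additivity of the $p$-rank under semi-abelian extensions, $f(X) = f(X') + 1 = f_1 + 1$. As $\kappa_{0,I}(\tau)$ represents the cover $\pi:Y\to X$, it lies in $\Delta_{0,I}[\bar\R_{g,\ell}]$ by definition, and $\Pi_\ell(\kappa_{0,I}(\tau))$ represents $X$, which has $p$-rank $f_1+1$; hence $\kappa_{0,I}(\tau) \in \Pi_\ell^{-1}(\bar\M_g^{f_1+1}) = \bar W_g^{f_1+1}$. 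Therefore $\kappa_{0,I}(\tau) \in \Delta_{0,I}[\bar W_g^{f_1+1}]$.
\end{proof}
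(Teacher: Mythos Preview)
Your proof is correct and follows essentially the same approach as the paper: the paper's proof is the single line ``This follows from \eqref{noncompactisoF},'' and your argument simply unpacks what \eqref{noncompactisoF} encodes, namely that self-gluing a curve of $p$-rank $f_1$ at two smooth points raises the torus rank by one and hence the $p$-rank by one. One small remark: your dual-graph computation (one vertex, one loop) is phrased for the case where $X'$ is smooth, whereas $\bar W_{g-1;2}^{f_1}$ includes boundary points; but the same reasoning goes through in general since adding a single edge to $\Gamma_{X'}$ always increases $\operatorname{rank} H^1$ by one, which is exactly what \eqref{noncompactisoF} records.
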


\begin{proof}
This follows from \eqref{noncompactisoF}.
\end{proof}

\begin{lemma} \label{Lprank1}
Suppose $\pi:Y \rightarrow X$ is an unramified ${\mathbb Z}/\ell$-cover represented by a point of 
$\Delta_{0,I}[\bar {\mathcal R}_{g,\ell}]$.  
Then $P_\pi$ is an extension of a semi-abelian variety $P_{\pi'}$ by a torus $T$ with $r_T =\ell-1$.
If $f'_1$ is the $p$-rank of $P_{\pi'}$, then the $p$-rank of $P_\pi$ is $f'=f'_1+(\ell-1)$.
\end{lemma}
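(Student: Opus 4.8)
The plan is to mimic the structure of the proof of Lemma \ref{LDelig-i}, since the boundary component $\Delta_{0,I}$ is the non-compact-type analogue of $\Delta_{i:g-i}$: in both cases the curve $Y$ acquires exactly $\ell$ new nodes identified in a $\ZZ/\ell$-orbit, so the toric part of $J_Y$ has rank $\ell-1$. First I would record the structure of $J_Y$: since $Y$ has normalization $Y'$ and is formed by gluing the $\ell$ pairs $\sigma^k(x') \sim \sigma^k(y')$, the dual graph $\Gamma_Y$ has one vertex (for the single component $Y'$) and $\ell$ edges, so $H^1(\Gamma_Y, \ZZ)$ has rank $\ell-1$, and \eqref{eqBLR} gives a short exact sequence $1 \to T \to J_Y \to J_{Y'} \to 1$ with $r_T = \ell-1$. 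Here I should note that $Y'$ is connected (this is the generic situation on $\Delta_{0,I}$, where $\pi'$ is a genuine connected cover), so $J_{Y'}$ is the honest Jacobian of the smooth curve $Y'$.

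Next I would set up the $\sigma$-equivariant version of this sequence and chase it, exactly as in Lemma \ref{LDelig-i}. The generator $\sigma$ of $\ZZ/\ell$ acts compatibly on $J_X \hookrightarrow J_Y$, on $J_{X'} \hookrightarrow J_{Y'}$, and on $T$; in fact $\sigma$ permutes the $\ell$ edges of $\Gamma_Y$ cyclically, so it acts on $T \cong H^1(\Gamma_Y,\ZZ)\otimes \mathbb{G}_m$ as the standard $\ell$-cycle on the regular-representation-minus-trivial piece. I would write the commutative diagram with exact rows
\[
\xymatrix{
1 \ar[r] & 1 \ar[r]\ar[d] & J_X \ar[r]\ar[d] & J_{X'} \ar[r]\ar[d] & 1 \\
1 \ar[r] & T \ar[r] & J_Y \ar[r] & J_{Y'} \ar[r] & 1,
}
\]
apply the snake lemma to the vertical maps (which here are the inclusions coming from $\pi^*$, or alternatively one works with $1-\sigma$ directly), and extract
\[
1 \to T \to P_\pi \to P_{\pi'} \to 1,
\]
using that the cokernel of $J_{X'}\to J_{Y'}$ along the Prym construction is $P_{\pi'}$ and that $\ker$ of the left column is trivial since $X\hookrightarrow Y$ induces an injection on Jacobians. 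Here $P_{\pi'}$ is a semi-abelian variety because $J_{X'}$ and $J_{Y'}$ are abelian (Jacobians of smooth curves), so actually $P_{\pi'}$ is an abelian variety in the generic case — but the statement allows $X'$ itself to be of compact type with several components, in which case $P_{\pi'}$ is genuinely semi-abelian; I would phrase it the way the lemma does. Finally, the $p$-rank is additive in this extension (as recorded in the subsection on the $p$-rank: $f_{A'} = f_A + \mathrm{rank}(T)$ for an extension $A'$ of an abelian variety $A$ by a torus $T$, and more generally for semi-abelian $A$), giving $f' = f_{P_{\pi'}} + r_T = f'_1 + (\ell-1)$.

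The one point requiring genuine care — the place where this differs slightly from Lemma \ref{LDelig-i} — is verifying that the induced sequence $1 \to T \to P_\pi \to P_{\pi'} \to 1$ is exact, i.e. that the torus $T$ lands entirely inside $P_\pi = \mathrm{im}(1-\sigma)$ and surjects onto the toric part appearing in $J_{Y'}$'s Prym (which is none, since $J_{Y'}$ has no toric part when $Y'$ is smooth). Concretely I need: the intersection $J_X \cap T$ inside $J_Y$ is trivial (clear, since $J_X$ is an abelian variety and the composite $J_X \to J_Y \to J_{Y'}$ is injective), and $(1-\sigma)$ is surjective on $T$ because $\sigma$ has no nonzero invariants on the rank-$(\ell-1)$ lattice $H^1(\Gamma_Y,\ZZ)$ — equivalently $1-\sigma$ is invertible on the non-trivial isotypic part over $\ZZ[1/\ell]$ and an isogeny integrally. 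This forces $T \subset P_\pi$. The surjection $P_\pi \to P_{\pi'}$ and the identification of the kernel with $T$ then follow from the snake lemma applied to the diagram above, just as in Lemma \ref{LDelig-i}. I expect this compatibility-of-$\sigma$-actions bookkeeping to be the only real obstacle; everything else is a direct transcription of the compact-type argument.
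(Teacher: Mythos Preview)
Your proposal contains a genuine error that happens to cancel itself numerically but leaves the argument invalid. You treat $J_X$ as an abelian variety (top-left entry ``$1$'' in your diagram, and later ``$J_X$ is an abelian variety and the composite $J_X \to J_Y \to J_{Y'}$ is injective''). But $X$ is itself nodal: it is obtained from the smooth curve $X'$ by identifying $x$ with $y$, so its dual graph has one vertex and one loop, and $J_X$ is an extension of $J_{X'}$ by a rank-$1$ torus $T_X$. In particular $J_X \to J_{Y'}$ is \emph{not} injective; its kernel contains $T_X$. Relatedly, your computation of $H^1(\Gamma_Y,\ZZ)$ is off by one: $\Gamma_Y$ is a bouquet of $\ell$ loops on a single vertex, so $H^1$ has rank $\ell$, not $\ell-1$. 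Thus the toric part $T_Y$ of $J_Y$ has rank $\ell$, again not $\ell-1$.

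The paper's proof accounts for both of these facts simultaneously: it writes the diagram
\[
\xymatrix{
1\ar[r]& T_X \ar[r]\ar[d]& J_X \ar[r]\ar[d]&J_{X'} \ar[r]\ar[d]&1\\
1\ar[r]&T_Y\ar[r]& J_Y \ar[r]& J_{Y'} \ar[r]&1,}
\]
with $\mathrm{rank}(T_X)=1$ and $\mathrm{rank}(T_Y)=\ell$, and the snake lemma yields $1 \to T \to P_\pi \to P_{\pi'} \to 1$ with $T = T_Y/T_X$ of rank $\ell-1$. Your two errors (forgetting $T_X$ and undercounting $T_Y$) cancel in the final rank, but the intermediate claims are false, so the snake-lemma step you wrote does not go through as stated. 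The fix is exactly to insert $T_X$ on the top row.
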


\begin{proof}
There is a commutative diagram with exact rows:
\[\xymatrix{
      1\ar[r]& T_X \ar[r]\ar[d]& J_X \ar[r]\ar[d]&J_{X'} \ar[r]\ar[d]&1\\
      1\ar[r]&T_Y\ar[r]& J_Y \ar[r]& J_{Y'} \ar[r]&1,}\]
where $T_X$ is a torus of rank $1$ and $T_Y$ is a torus of rank $\ell$.
Then $T_Y/T_X$ is a torus $T$ of rank $\ell-1$.
By the snake lemma, there is an exact sequence
$1 \to T \to P_\pi \to P_{\pi'} \to 1$. 
\end{proof}

\subsubsection{The Boundary Component $\Delta_{0, II}$} The boundary divisor $\Delta_{0,II}[\bar {\mathcal R}_{g,\ell}]$ is the image of the clutching map
\[\kappa_{0,II}: \bar {\mathcal M}_{g-1;2} \rightarrow \bar {\mathcal R}_{g,\ell},\]
defined on a generic point as follows.
Let $\omega$ be a point of $\bar {\mathcal M}_{g-1; 2}$ representing $(X', x, y)$ (with 2 markings). 
Consider a disconnected curve with components $(X'_i, x_i,y_i)$
indexed by $i \in {\mathbb Z}/\ell$ such that each component is isomorphic to $(X', x, y)$. 

Let $Y$ be the nodal curve of non-compact type formed by identifying $\sigma^k(x_1)$ and $\sigma^{k+1}(y_1)$,
for $0 \leq k \leq \ell-1$,  
in an ordinary double point.
Then $\kappa_{0,II} (\omega)$ is the point representing the ${\mathbb Z}/\ell$-cover $\pi:Y \to C$.
This is illustrated in Figure \ref{fig:Delta02} for $\ell=2$.

\begin{figure}[h]
\centering
\includegraphics[width=0.6\linewidth]{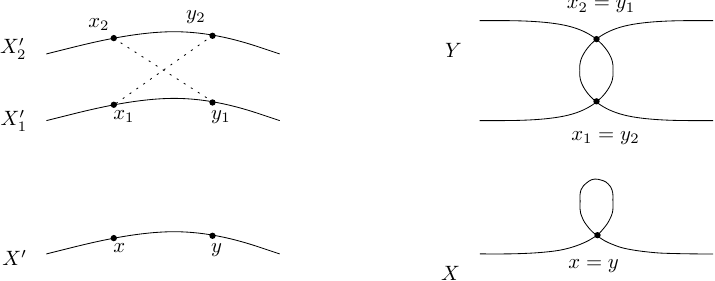}
\caption{$\Delta_{0,II, \ell=2}$}
\label{fig:Delta02}
\end{figure}

\begin{lemma}\label{bounW2}
The clutching map $\kappa_{0,II}$ restricts to a map $\kappa_{0,II}: \bar {\mathcal M}_{g-1;2}^{f_1} \to \Delta_{0,II}[\bar W_g^{f_1+1}]$.
\end{lemma}

\begin{proof}
This follows from \eqref{noncompactisoF}.
\end{proof}

\begin{lemma} \label{Lprank2}
Suppose $\pi:Y \rightarrow X$ is an unramified ${\mathbb Z}/\ell$-cover represented by a point of 
$\Delta_{0,II}[\bar {\mathcal R}_{g,\ell}]$.  
Then $P_\pi \simeq J_{X'}^{\ell-1}$.
If $f_1$ is the $p$-rank of $X'$, then the $p$-rank of $P_\pi$ is $(\ell-1)f_1$.
\end{lemma}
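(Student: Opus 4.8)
The plan is to prove this by the same method as Lemma \ref{Lprank1}, using the semi-abelian structure \eqref{eqBLR} of $J_Y$. The new feature, compared with the compact-type case of Lemma \ref{LDeli}, is that the dual graph $\Gamma_Y$ is an $\ell$-cycle rather than a tree, so $J_Y$ carries a one-dimensional torus; the point is that this torus is ``inert'' and contributes nothing to the Prym.

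First I would record the two relevant extensions. Writing $X$ for the genus $g$ curve obtained from $(X',x,y)$ by identifying $x$ with $y$, its dual graph is a single loop, so \eqref{eqBLR} gives $1 \to T_X \to J_X \to J_{X'} \to 1$ with $r_{T_X}=1$. The normalization of $Y$ is $\coprod_{i \in \ZZ/\ell} X'_i$ with each $X'_i \cong X'$, and $\Gamma_Y$ is an $\ell$-cycle, so \eqref{eqBLR} gives $1 \to T_Y \to J_Y \to J_{X'}^{\ell} \to 1$ with $r_{T_Y}=1$. Pullback along $\pi$ gives a morphism of these two sequences: since $\pi$ restricts to an isomorphism from each $X'_i$ onto the single component $X'$ of $X$, the induced map on abelian quotients is the diagonal $J_{X'}\to J_{X'}^{\ell}$; and the induced map $T_X\to T_Y$ is multiplication by $\ell$ on the gluing parameters (the node of $X$ has $\ell$ preimages on $Y$), hence an isogeny.

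Next I would feed this into the snake lemma exactly as in Lemma \ref{Lprank1}: the cokernels of the three vertical maps are a finite group scheme (from the isogeny $T_X\to T_Y$), an abelian variety isogenous to $P_\pi$ (from $J_X\to J_Y$, since $J_Y\sim J_X\oplus P_\pi$), and $J_{X'}^{\ell-1}$ (the cokernel of the diagonal). Since the diagonal is injective, the snake sequence collapses to $1\to(\text{finite})\to P_\pi\to J_{X'}^{\ell-1}\to 1$ up to isogeny, so $P_\pi\sim J_{X'}^{\ell-1}$ and, the $p$-rank being an isogeny invariant, $f'=f_{J_{X'}^{\ell-1}}=(\ell-1)f_1$. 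To obtain the literal isomorphism $P_\pi\simeq J_{X'}^{\ell-1}$ in the statement, I would instead work directly with $1-\sigma$: the $\sigma$-equivariant projection $q\colon J_Y\to J_{X'}^{\ell}$ kills $T_Y$, on which $\sigma$ acts trivially because $T_Y\subseteq\pi^{*}J_X$ and $\pi^{*}J_X$ is pointwise $\sigma$-fixed ($\pi\sigma=\pi$); hence $q$ maps $P_\pi=\mathrm{im}(1-\sigma)$ onto $\mathrm{im}\bigl(1-\sigma\mid J_{X'}^{\ell}\bigr)=\ker(1+\sigma+\cdots+\sigma^{\ell-1})\cong J_{X'}^{\ell-1}$, and $q$ is injective on $P_\pi$ since $\ker(1-\sigma\mid J_{X'}^{\ell})$ is the diagonal, which lifts into the $\sigma$-fixed subgroup $\pi^{*}J_X$.

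The step I expect to be the main obstacle is the one that separates this lemma from Lemma \ref{LDeli}: checking that the one-dimensional torus in $J_Y$ does not enlarge the Prym. This reduces to the inclusion $T_Y\subseteq\pi^{*}J_X$ --- equivalently, to $T_X\to T_Y$ being surjective, equivalently to $\sigma$ acting trivially on $T_Y$ --- which I would read off from the $\ell$-to-$1$ covering $\Gamma_Y\to\Gamma_X$ of dual graphs. Everything else is the formal diagram chase already carried out for Lemma \ref{Lprank1}.
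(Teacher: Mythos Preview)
Your proof is correct and follows essentially the same route as the paper: set up the commutative diagram of semi-abelian extensions for $J_X$ and $J_Y$ from \eqref{eqBLR} and apply the snake lemma. The paper's proof is terser---it writes both rank-one tori with the same letter $T$ and stops at ``the result follows by the snake lemma''---whereas you correctly identify the vertical map $T_X\to T_Y$ as the $\ell$th-power map (hence surjective with trivial cokernel, not merely finite cokernel as you wrote) and you supply the extra argument via $\operatorname{im}(1-\sigma)$ needed to upgrade the isogeny $P_\pi\sim J_{X'}^{\ell-1}$ to the literal isomorphism claimed in the statement, which the paper does not spell out.
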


\begin{proof}
Like Lemma \ref{Lprank1}, but $T_X$ and $T_Y$ have rank $1$.
\end{proof}

\subsubsection{The Boundary Component $\Delta_{0,III}$}  

The points of the last boundary component(s) represent
level-$\ell$ twisted curves $[X, \tilde{\eta}, \tilde{\phi}]$ which have the following structure.
Let $(X', x, y)$ be a point of $\bar {\mathcal M}_{g-1; 2}$ and let $E$ be a projective line.
The curve $X=X' \cup_{x,y} E$ of genus $g$ has components $X'$ and $E$, 
with two ordinary double points formed by identifying $x$ with $0$ and $y$ with $\infty$.
Since $E$ is an exceptional component, the restriction of $\tilde{\eta}$ to $E$ is 
$\eta_E=\mathcal O_E(1)$.  
This implies that the restriction $\eta_{X'}$ of $\tilde{\eta}$ to $X'$ has degree $-1$ and
that
$\eta_{X'}^{\otimes - \ell}=\mathcal O_{X'}(a x + (\ell-a) y)$,
for some $1 \leq a \leq \ell-1$.

The boundary divisor $\Delta_{0,III}^{(a)}$ is the closure in $\bar {\mathcal R}_{g-1,\ell}$ of points representing 
such level-$\ell$ twisted curves
$[X, \tilde{\eta}, \tilde{\phi}]$.  For complete details, see \cite[Section 1.4]{F_generalL}.
There is a clutching map 
\[\kappa_{0, III}^{(a)}: \bar M_{g-1;2} \rightarrow \Delta_{0,III}^{(a)}[\bar R_{g, \ell}].\]

\begin{lemma} \label{bounW3}
The clutching map $\kappa_{0,III}^{(a)}$ restricts to a map 
\[\kappa_{0,III}^{(a)}: \bar W_{g-1;2}^{f_1} \rightarrow \Delta_{0,III}^{(a)}[\bar W_g^{f_1+1}].\]
\end{lemma}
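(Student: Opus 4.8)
The plan is to compute the $p$-rank of $P_\pi$ directly from the structure of the level-$\ell$ twisted curve $[X,\tilde\eta,\tilde\phi]$ described just above, in the same spirit as the proofs of Lemmas \ref{Lprank1} and \ref{Lprank2}. First I would translate the statement into a claim about semi-abelian varieties: if $\pi:Y\to X$ is the unramified $\ZZ/\ell$-cover (in the twisted sense) associated to a generic point of $\Delta_{0,III}^{(a)}[\bar\R_{g,\ell}]$ lying over the point $\kappa_0(X',x,y)$ of $\Delta_0[\bar\M_g]$ with $X'$ of $p$-rank $f_1$, then $\Pi_\ell$ sends it to $\kappa_0(X',x,y)\in\Delta_0[\bar\M_g^{f_1+1}]$ by \eqref{noncompactisoF}, since $X=X'\cup_{x,y}E$ has $p$-rank $f_1+1$ (the exceptional line $E$ contributes nothing to $\Pic^0$ beyond the extra torus factor coming from the loop in the dual graph). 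So the point $\kappa_{0,III}^{(a)}(X',x,y)$ indeed lies in $\bar W_g^{f_1+1}=\Pi_\ell^{-1}(\bar\M_g^{f_1+1})$, which is exactly the content of the lemma. In other words, the lemma is \emph{not} a statement about the $p$-rank of the Prym at all — it is just the assertion that $\kappa_{0,III}^{(a)}$ respects the $p$-rank stratification via $\Pi_\ell$, and it follows formally from \eqref{noncompactisoF} once one checks that $\Pi_\ell\circ\kappa_{0,III}^{(a)}=\kappa_0$.

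Concretely the steps are: (1) recall from \cite[Section 1.4]{F_generalL} the precise description of the twisted curve over a point of $\Delta_{0,III}^{(a)}$, including that its coarse space is $X=X'\cup_{x,y}E$ and that $\Pi_\ell$ forgets the level structure and contracts the exceptional component $E$, so that $\Pi_\ell\circ\kappa_{0,III}^{(a)}$ is the composite of $(X',x,y)\mapsto X'\cup_{x,y}E\mapsto X'\cup_{x,y}(\text{node})$, which is exactly $\kappa_0:\bar\M_{g-1;2}\to\Delta_0[\bar\M_g]$; (2) apply \eqref{noncompactisoF}, which says $\kappa_0$ maps $\bar\M_{g-1;2}^{f_1}$ into $\Delta_0[\bar\M_g^{f_1+1}]$; (3) conclude that $\kappa_{0,III}^{(a)}$ maps $\bar\M_{g-1;2}^{f_1}$ (equivalently $\bar W_{g-1;2}^{f_1}$, noting the source of $\kappa_{0,III}^{(a)}$ is $\bar\M_{g-1;2}$, whose $p$-rank stratification matches that pulled back along the relevant forgetful map) into $\Pi_\ell^{-1}(\Delta_0[\bar\M_g^{f_1+1}])\cap\Delta_{0,III}^{(a)}[\bar\R_{g,\ell}]=\Delta_{0,III}^{(a)}[\bar W_g^{f_1+1}]$.

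The one genuine point to verify — the main (if modest) obstacle — is step (1): that $\Pi_\ell$ restricted to the component $\Delta_{0,III}^{(a)}$ really does land over $\Delta_0[\bar\M_g]$ and induces $\kappa_0$ after contracting $E$, rather than landing over some $\Delta_i$ with $i>0$. This is where Proposition \ref{Pdivisor2} is used: it records precisely that $\Pi_\ell^*(\Delta_0[\bar\M_g])$ contains the components $\Delta_{0,III}^{(a)}[\bar\R_{g,\ell}]$ (with multiplicity $\ell$), so $\Delta_{0,III}^{(a)}$ lies over $\Delta_0[\bar\M_g]$; combining this with the explicit coarse-space description from \cite{F_generalL} identifies the induced clutching datum as $\kappa_0$ up to the harmless contraction of the exceptional $\mathbb P^1$. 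Since contracting $E$ does not change $\Pic^0$, the $p$-rank is preserved, and the lemma follows. (Note that, unlike Lemmas \ref{LDelig-i}, \ref{LDeli}, \ref{Lprank1}, \ref{Lprank2}, no separate computation of the $p$-rank of $P_\pi$ in terms of $f_1$ is being asserted here — this lemma only tracks the $p$-rank of the base curve through the stratification, which is all that is needed for the dimension counts referenced in Section \ref{boundary}.)
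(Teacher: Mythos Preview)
Your proposal is correct and follows the same approach as the paper, which proves the lemma in one line by citing \eqref{noncompactisoF}; you have simply spelled out explicitly the compatibility $\Pi_\ell\circ\kappa_{0,III}^{(a)}=\kappa_0$ (via contraction of the exceptional $E$) that the paper leaves implicit. Your parenthetical observation that the source should really be read as $\bar\M_{g-1;2}^{f_1}$ is also apt, since the domain of $\kappa_{0,III}^{(a)}$ was defined as $\bar\M_{g-1;2}$ rather than a space of covers.
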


\begin{proof}
This follows from \eqref{noncompactisoF}.
\end{proof}

%%%%%%%%%%%%%%%%%%%%%%%%%%%%%%%%%%%%%%%%%%%%%%%%%%%%%%%%%%%%%%%%%%%%%%%%%%%%%%%%
\section{Ordinary Pryms of a generic curve of given $p$-rank} \label{Sordinary}%%%%%%%%%%%%%%%%%%%%%%%%%%%%%%%%%%%%%%%%
%%%%%%%%%%%%%%%%%%%%%%%%%%%%%%%%%%%%%%%%%%%%%%%%%%%%%%%%%%%%%%%%%%%%%%%%%%%%%%%%

The main result of this section is that the Prym variety of an unramified ${\mathbb Z}/\ell$-cover of a 
generic curve of genus $g$ and $p$-rank $f$ is ordinary, Theorem \ref{TW}.  We prove this using degeneration to
$\partial {\mathcal R}_{g, \ell}$ and information about the ${\mathbb Z}/\ell$-monodromy of components of ${\mathcal M}_g^f$.  
The monodromy results are needed since all curves represented by a point of $\Delta_i[\bar {\mathcal M}_g^f]$ 
have an unramified ${\mathbb Z}/\ell$-cover with non-ordinary Prym, when $f<g$ and $0 \leq i <g$,
as seen in Sections \ref{Sdeltai} and \ref{Snoncompact}.

\subsection{Earlier work on the $p$-rank stratification of ${\mathcal M}_g$} 

\begin{proposition} \label{PinterDelta1}
\cite[Proposition 3.4]{AP:08}
Let $g \ge 2$ and $0 \le f \le g$.  
Suppose $1 \le i \le g-1$ and $(f_1,f_2)$ is a pair such that $f_1+f_2 =f$ and $0 \leq f_1 \leq i$ and $0 \leq f_2 \leq g-i$.
Let $S$ be an irreducible component of ${\mathcal M}_g^f$ and let $\bar S$ be its closure in $\bar {\mathcal M}_g$.
\begin{enumerate}
\item 
Then $\bar S$ intersects $\kappa_{i:g-i}(\bar {\mathcal M}_{i;1}^{f_1} \times \bar {\mathcal M}_{g-i;1}^{f_2})$.
\item Each irreducible component of the intersection contains the image of a component of 
$\bar {\mathcal M}_{i;1}^{f_1} \times \bar {\mathcal M}_{g-i;1}^{f_2}$.
\end{enumerate}
\end{proposition}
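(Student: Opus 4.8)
The plan is to establish part (1) — that $\bar S$ genuinely reaches the prescribed boundary stratum — and then to deduce part (2) from it by a dimension count, as follows. Since $S$ is open dense in $\bar S$ and disjoint from the boundary, while $\Delta_i[\bar\M_g]$ is an effective Cartier divisor on the smooth stack $\bar\M_g$, any non-empty intersection $\bar S\cap\Delta_i[\bar\M_g]$ is pure of codimension one in $\bar S$, hence of dimension $2g-4+f$. By \eqref{jacisoF} and \cite[Theorem 2.3]{FVdG}, the locus $\Delta_i[\bar\M_g^f]$ is equidimensional of dimension $2g-4+f$, and its irreducible components are exactly the images $\kappa_{i:g-i}(Z_1\times Z_2)$ as $Z_1,Z_2$ run over the components of $\bar\M_{i;1}^{f_1'}$ and $\bar\M_{g-i;1}^{f_2'}$ over all $f_1'+f_2'=f$. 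As $\bar S\subseteq\bar\M_g^f$, the intersection $\bar S\cap\Delta_i[\bar\M_g]$ is a union of some of these components. Writing $K:=\kappa_{i:g-i}(\bar\M_{i;1}^{f_1}\times\bar\M_{g-i;1}^{f_2})$, if such a component $C=\kappa_{i:g-i}(Z_1\times Z_2)$ lies in $K$, then, evaluating at the generic point of $C$ and using that the closure of a $p$-rank stratum is the locus where the $p$-rank is at most that value, the generic $p$-ranks $f_1',f_2'$ of $Z_1,Z_2$ — which sum to $f$ — satisfy $f_1'\le f_1$ and $f_2'\le f_2$, hence $(f_1',f_2')=(f_1,f_2)$; so every such $C$ is the $\kappa_{i:g-i}$-image of a component of $\bar\M_{i;1}^{f_1}\times\bar\M_{g-i;1}^{f_2}$. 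Granting part (1), i.e.\ $\bar S\cap K\ne\emptyset$, together with the purity of the $p$-rank strata (so that $\bar S\cap K$ is pure of dimension $2g-4+f$ and each of its components is a $C\subseteq K$ of the above form), one obtains part (2).

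For part (1) the plan is an induction on $g$, run simultaneously for the corresponding statement on moduli spaces of pointed curves — this is needed because the boundary divisors of $\bar\M_g$ are clutchings of pointed moduli spaces, and because $\M_g^f$ is typically reducible, so the argument must apply to an arbitrary component $S$. It suffices to treat $i=1$: once one knows that $\bar S$ reaches $\Delta_1[\bar\M_g]$ with its genus-one tail carrying a prescribed $p$-rank ($0$ or $1$), iterating this and its pointed analogues peels off, inside $\bar S$, a chain of genus-one curves with any pattern of ordinary and supersingular links summing to $f$, and cutting this chain at the appropriate node exhibits a point of $\bar S$ lying in $K$ for any admissible triple $(i;f_1,f_2)$. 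Once the genus has dropped to $1$ or $2$ the base case is checked directly with elliptic and hyperelliptic models. For the inductive step, let $S$ be a component of $\M_g^f$, so $\dim\bar S=2g-3+f$; by Diaz's bound (or by the structure of the $p$-rank stratification recalled above, cf.\ Lemma \ref{LdimW} and its analogue on $\bar\M_g$), $\bar S\not\subseteq\M_g$, so $\bar S$ meets some $\Delta_j[\bar\M_g]$ with $0\le j\le\lfloor g/2\rfloor$, and, as above, this intersection is pure of dimension $2g-4+f$ and a union of components of the $p$-rank-$f$ boundary stratum. Pulling it back through the relevant clutching morphism and tracking $p$-ranks via \eqref{jacisoF}--\eqref{noncompactisoF} produces a component of some $\M_{h;n}^{\varphi}$ with $h<g$ to which the inductive hypothesis applies; one then re-clutches, using Lemma \ref{Lirreducible} to pass between pointed and unpointed versions, so as to redirect the degeneration into $\Delta_1[\bar\M_g]$ with the genus-one tail carrying the prescribed $p$-rank — taking the tail off whichever side of the configuration has enough $p$-rank for an ordinary tail, or is non-ordinary for a supersingular tail.

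The main obstacle is the possible reducibility of $\M_g^f$ together with the fact that one does not get to choose \emph{which} boundary divisor $\bar S$ first meets: a given component may run into the boundary through $\Delta_0$ or through some $\Delta_j$ with $j\ge2$, and the argument must redirect whatever degeneration it is handed into $\Delta_1$ (and then to $\Delta_i$) while keeping the $p$-rank of the genus-one tail under control. Carrying out this redirection uniformly over all components $S$ and all admissible data is exactly what forces the induction onto pointed moduli spaces rather than onto $\bar\M_g$ alone, and it is where the real work of the proof lies.
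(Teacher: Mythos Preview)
This proposition is not proved in the paper: it is quoted verbatim from \cite[Proposition 3.4]{AP:08} and used as a black box (see the citation attached to the proposition statement in Section~\ref{Sordinary}). So there is no ``paper's own proof'' to compare your proposal against.

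That said, your outline is broadly in the spirit of the argument in \cite{AP:08}: part~(2) is indeed a dimension count once part~(1) is known (your reasoning via the Cartier-divisor property of $\Delta_i$ and equidimensionality of $\Delta_i[\bar\M_g^f]$ is the right mechanism), and part~(1) is handled there by an induction on $g$ that reduces to degenerations peeling off genus-one tails, tracked through pointed moduli. One caution on your sketch of~(2): the purity input should be applied to $\bar S\cap\Delta_i[\bar\M_g]$ (a divisor intersection), not directly to $\bar S\cap K$; you then use that $\bar S\cap\Delta_i\subseteq\Delta_i[\bar\M_g^f]$ is a union of full components of the latter, and only afterwards intersect with $K$. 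Your $p$-rank inequality argument forcing $(f_1',f_2')=(f_1,f_2)$ is correct.
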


Let ${\mathcal C} \rightarrow S$ be a relative proper semi-stable 
curve of compact type of genus $g$ over $S$.  Then
${\rm Pic}^0({\mathcal C})[\ell]$ is an \'etale cover of $S$ with geometric fiber
isomorphic to $({\mathbb Z}/\ell)^{2g}$.  For each $n \in {\mathbb N}$, the fundamental group
$\pi_1(S,s)$ acts linearly on the fiber ${\rm Pic}^0({\mathcal C})[\ell^n]_s$, and the
monodromy group ${\sf M}_{\ell^n}({\mathcal C} \rightarrow S, s)$ is the image of $\pi_1(S,s)$
in ${\rm Aut}({\rm Pic}^0({\mathcal C})[\ell^n]_s)$.  
Also ${\sf M}_{{\mathbb Z}_\ell}(S,s):={\rm lim}_{\stackrel{\leftarrow}{n}} {\sf M}_{\ell^n}(S,s)$ is the $\ell$-adic monodromy group. 
When $S$ is an irreducible
component of ${\mathcal M}_g^f$ and ${\mathcal C} \rightarrow S$ is the tautological curve, 
the next result states that
${\sf M}_\ell(S):={\sf M}_\ell({\mathcal C} \to S,s)$ and ${\sf M}_{{\mathbb Z}_\ell}(S):={\sf M}_{{\mathbb Z}_\ell}({\mathcal C} \to S,s)$ are 
as large as possible.

\begin{theorem} \label{Tmono} \cite[Theorem 4.5]{AP:08}
Let $\ell$ be a prime distinct from $p$; let $g \ge 2$ and $0 \leq f \leq g$ with $f \not = 0$ if $g = 2$. 
Let $S$ be an irreducible component of ${\mathcal M}_g^f$, the $p$-rank $f$ stratum in ${\mathcal M}_g$. 
Then ${\sf M}_\ell(S)\simeq {\rm Sp}_{2g}({\mathbb Z}/\ell)$ and
${\sf M}_{{\mathbb Z}_\ell}(S) \simeq {\rm Sp}_{2g}({\mathbb Z}_\ell)$.
\end{theorem}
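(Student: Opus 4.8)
The plan is to prove the upper bound $\mono_{\ZZ_\ell}(S)\subseteq{\rm Sp}_{2g}(\ZZ_\ell)$ formally, and then the matching lower bound by induction on $g$, working first mod $\ell$ and afterwards lifting. Since ${\rm Pic}^0(\mathcal C)$ is principally polarized over $S$, the Weil pairing endows each ${\rm Pic}^0(\mathcal C)[\ell^n]_s\cong(\ZZ/\ell^n)^{2g}$ with a $\pi_1(S,s)$-equivariant symplectic form, which gives the upper bound. I would reduce the $\ell$-adic statement to the assertion $\mono_\ell(S)={\rm Sp}_{2g}(\FF_\ell)$: the kernel of ${\rm Sp}_{2g}(\ZZ_\ell)\to{\rm Sp}_{2g}(\FF_\ell)$ is pro-$\ell$, its Frattini quotient is the adjoint ${\rm Sp}_{2g}(\FF_\ell)$-module (the Lie algebra, i.e.\ ${\rm Sym}^2$ of the standard module), which for all but finitely many small $(\ell,g)$ is irreducible with vanishing $H^1$; a Frattini argument then forces a closed subgroup surjecting mod $\ell$ to be everything. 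The case $f=g$ is essentially free: $\M_g^g$ is open and dense in $\M_g$, so $\pi_1^{\rm et}(\M_g^g)\to\pi_1^{\rm et}(\M_g)$ is surjective, and the monodromy of $\M_g$ on prime-to-$p$ torsion is the classical full symplectic group. This, together with the base cases $g=1$, $f=1$ (the ordinary $j$-line, with monodromy ${\rm SL}_2(\ZZ/\ell)={\rm Sp}_2(\ZZ/\ell)$ by Igusa) and $g=2$, $f=1$ by a direct analysis, seeds the induction; the hypothesis $f\neq0$ when $g=2$ is exactly what is needed to avoid the one-dimensional, genuinely delicate stratum $\M_2^0$.

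For the inductive step take a component $S$ of $\M_g^f$ with $g\ge3$ and $f<g$, and set $f_1=\min(1,f)$, $f_2=f-f_1$. By Proposition \ref{PinterDelta1}, $\bar S$ meets $\kappa_1(\bar\M_{1;1}^{f_1}\times\bar\M_{g-1;1}^{f_2})$, and every component $Z$ of the intersection contains the image of some $\bar S_1\times\bar S_2$ with $S_i$ a component of $\M_{g_i}^{f_i}$. Pushing loops from a punctured neighbourhood of a smooth point of $Z$ into $S$ and specializing, $\mono_\ell(S)$ contains the monodromy of the clutched family over $\bar S_1\times\bar S_2$; by the compatibility of ${\rm Pic}^0$ with clutching (see \eqref{eqBLR}) this acts on an orthogonal sum $H_1\oplus H_2$ with $\dim H_1=2$, $\dim H_2=2(g-1)$, through $\mono_\ell(S_1)\times\mono_\ell(S_2)$. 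When $f\ge1$ the first factor is ${\rm Sp}(H_1)$ and, by induction, the second is ${\rm Sp}(H_2)$; when $f=0$ the elliptic tail is supersingular and contributes nothing, but the genus-$(g-1)$ factor still gives ${\rm Sp}(H_2)$ by induction (using $g-1\ge2$, with the residual cases $g\le3$, $f\le1$ treated directly). In all cases $\mono_\ell(S)$ contains the pointwise stabilizer ${\rm Sp}(H_2)\cong{\rm Sp}_{2g-2}(\FF_\ell)$ of the non-degenerate $2$-plane $H_1$.

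It remains to enlarge this to all of ${\rm Sp}_{2g}(\FF_\ell)$, for which it suffices to produce one element of $\mono_\ell(S)$ not stabilizing $H_2$, so that $\mono_\ell(S)$ does \emph{not} preserve $H_1$. When $f\ge1$, $\bar S$ also meets $\Delta_0[\bar\M_g]$ through $\kappa_0:\bar\M_{g-1;2}^{f-1}\to\Delta_0[\bar\M_g^f]$, and Picard--Lefschetz identifies the local monodromy with a symplectic transvection along a primitive vanishing cycle; choosing the degeneration so that this cycle avoids $H_2$ finishes this case. When $f=0$, I would instead show that $\mono_\ell(S)$ acts irreducibly: any invariant subspace is ${\rm Sp}(H_2)$-stable, hence comparable with $H_2$, and degenerating $S$ along two distinct boundary pieces produces two different symplectic $2$-planes whose complementary symplectic groups both lie in $\mono_\ell(S)$, which excludes the remaining possibilities. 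Either way one invokes the group-theoretic fact that a subgroup of ${\rm Sp}_{2g}(\FF_\ell)$ containing ${\rm Sp}_{2g-2}(\FF_\ell)$ as the pointwise stabilizer of a non-degenerate $2$-plane, but not preserving that plane, must be all of ${\rm Sp}_{2g}(\FF_\ell)$ --- this can be proved directly from the fact that two such stabilizers attached to nearby $2$-planes already generate the whole group, which has the advantage of sidestepping the exceptional transvection subgroups in the Zalesskii--Serezhkin classification. Feeding $\mono_\ell(S)={\rm Sp}_{2g}(\FF_\ell)$ back into the first paragraph gives $\mono_{\ZZ_\ell}(S)={\rm Sp}_{2g}(\ZZ_\ell)$.

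The hard part is the low-complexity bookkeeping rather than the main line of argument: establishing the base cases $g=1,2$ and the residual small $(g,f)$ cases (where the exclusion $f\neq0$ at $g=2$ is unavoidable), and controlling ${\rm Sp}_{2g}(\FF_\ell)$ for $\ell\in\{2,3\}$ in both the group-theoretic step and the $\ell$-adic lift. A second delicate point is the passage from the boundary to the interior: one must know enough about the local structure of $\bar\M_g^f$ along the boundary piece $Z$ to be sure that loops there are realized by loops in $S$ and that the specialization map on fundamental groups carries the inductively known monodromy into $\mono_\ell(S)$ --- this is exactly where Proposition \ref{PinterDelta1}(2) and the irreducibility of the relevant boundary strata do the real work.
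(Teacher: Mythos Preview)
The paper does not prove this theorem: it is quoted verbatim from \cite[Theorem~4.5]{AP:08} and used as a black box, so there is no in-paper proof to compare against.

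That said, your outline is essentially the strategy of the cited reference. The induction on $g$ via degeneration to $\Delta_1$ (made available precisely by Proposition~\ref{PinterDelta1}), the injection of Picard--Lefschetz transvections coming from $\Delta_0$ when $f\ge 1$, the group-theoretic step that a subgroup of ${\rm Sp}_{2g}(\FF_\ell)$ containing the pointwise stabilizer ${\rm Sp}_{2g-2}(\FF_\ell)$ of a non-degenerate $2$-plane and not normalizing that plane is the whole group, and the Frattini-type lift from $\FF_\ell$ to $\ZZ_\ell$ --- these are exactly the moves in \cite{AP:08}. The points you flag as delicate are the right ones: the base cases $(g,f)=(2,1)$ and especially $(g,f)=(3,0)$ (which cannot be reached by clutching an elliptic tail onto a component of $\M_2^0$, the excluded stratum, so it genuinely needs a separate argument), the small primes $\ell\in\{2,3\}$ in both the generation lemma and the $\ell$-adic lift, and the justification that the boundary monodromy really lands in $\mono_\ell(S)$ via the cospecialization map on $\pi_1$. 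One place to be slightly more careful is the phrase ``choosing the degeneration so that this cycle avoids $H_2$'': the vanishing cycle from a $\Delta_0$-degeneration is not something you get to choose freely, so what you actually want is that a $\Delta_0$-vanishing cycle and the $\Delta_1$-decomposition $H_1\oplus H_2$ arise from \emph{different} boundary strata and hence are not aligned; making this precise is part of the bookkeeping in \cite{AP:08}.
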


\subsection{Irreducibility of fibers of $\Pi_\ell$ over ${\mathcal M}_g^f$}

Recall that the morphism $\Pi_\ell: {\mathcal R}_{g, \ell} \to {\mathcal M}_g$, which sends
the point representing the cover $\pi:Y \to X$ to the point representing the curve $X$,
is finite and flat with degree $\ell^{2g}-1$.

\begin{proposition} \label{Pirred}
Under the hypotheses of Theorem \ref{Tmono}, 
if $S$ is an irreducible component of ${\mathcal M}_g^f$,
then $\Pi_\ell^{-1}(S)$ is irreducible.
\end{proposition}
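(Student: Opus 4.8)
The plan is to identify the fiber $\Pi_\ell^{-1}(S)$ with the quotient of a monodromy action and then use the largeness of the $\ZZ/\ell$-monodromy to conclude irreducibility. Recall that a point of $\R_{g,\ell}$ lying over a point $s \in S$ representing a curve $X$ corresponds to a nonzero element of $\Pic^0(X)[\ell] \cong (\ZZ/\ell)^{2g}$; equivalently, $\Pi_\ell^{-1}(S)$ is, as a scheme over $S$, the complement of the zero section in the finite \'etale group scheme $\Pic^0(\mathcal{C})[\ell] \to S$, where $\mathcal{C} \to S$ is the tautological curve. Since $S$ is irreducible (hence connected), the set of connected components of a finite \'etale cover of $S$ is in bijection with the set of orbits of the fundamental group $\pi_1(S, s_0)$ acting on the geometric fiber. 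Thus $\Pi_\ell^{-1}(S)$ is irreducible if and only if the monodromy group $\mono_\ell(S)$ acts transitively on the set of \emph{nonzero} vectors of $(\ZZ/\ell)^{2g}$.

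First I would make precise the identification of $\Pi_\ell^{-1}(S)$ with the nonzero locus of $\Pic^0(\mathcal{C})[\ell]$, invoking the description of $\R_{g,\ell}$ via triples $(X,\eta,\phi)$ from Section \ref{Sppt} and the fact that $\Pi_\ell$ is finite \'etale. (One should note that $S$ may be a stack rather than a scheme, but the orbit/component correspondence for finite \'etale covers holds for connected Deligne--Mumford stacks via the \'etale fundamental group, so this causes no trouble; alternatively one passes to a finite \'etale cover of $S$ by a scheme.) Next I would invoke Theorem \ref{Tmono}: under the stated hypotheses, $\mono_\ell(S) \cong \Sp_{2g}(\ZZ/\ell)$, acting on $(\ZZ/\ell)^{2g}$ in the standard way via the symplectic pairing coming from the Weil pairing on $\Pic^0(\mathcal{C})[\ell]$. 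Finally I would invoke the classical fact that the symplectic group $\Sp_{2g}(\ZZ/\ell)$ acts transitively on the nonzero vectors of its standard representation $(\ZZ/\ell)^{2g}$: given any two nonzero vectors $v, w$, one uses the nondegeneracy of the symplectic form together with Witt's extension theorem (or an elementary direct argument with symplectic transvections) to produce an element of $\Sp_{2g}(\ZZ/\ell)$ carrying $v$ to $w$. Transitivity on nonzero vectors yields a single orbit, hence a single connected component, hence irreducibility.

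The main obstacle is essentially bookkeeping rather than depth: one must be careful that the monodromy action in Theorem \ref{Tmono} is genuinely the action on the $\ell$-torsion of $\Pic^0$ that governs the component structure of $\Pi_\ell^{-1}(S)$, i.e. that the Weil-pairing symplectic structure is the one for which $\Sp_{2g}$ appears, and that ``irreducible component of $\M_g^f$'' being a connected (if possibly non-reduced or stacky) base is enough to run the $\pi_1$-orbit argument. Both points are standard, so the proof is short; the real content is imported wholesale from Theorem \ref{Tmono}.
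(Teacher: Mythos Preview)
Your proposal is correct and uses essentially the same idea as the paper: both arguments reduce to Theorem \ref{Tmono} plus the transitivity of $\Sp_{2g}(\ZZ/\ell)$ on nonzero vectors. The only difference is packaging. The paper introduces the full level-$\ell$ cover $S_{[\ell]} \to S$ (parametrizing symplectic isomorphisms $\Pic^0(\mathcal{C})[\ell] \cong (\ZZ/\ell)^{2g}$), observes that full monodromy makes $S_{[\ell]}$ irreducible, and then notes that the ``take the first basis vector'' map $S_{[\ell]} \to \Pi_\ell^{-1}(S)$ is surjective, forcing the target to be irreducible. You instead invoke the orbit--component correspondence directly on the finite \'etale cover $\Pi_\ell^{-1}(S) \to S$ and cite transitivity. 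These are equivalent: the surjectivity of the paper's forgetful map is exactly the statement that every nonzero vector extends to a symplectic basis, which is Witt's lemma, which is your transitivity statement. Your route is marginally more direct; the paper's has the minor advantage that irreducibility of $S_{[\ell]}$ is a cleaner restatement of ``monodromy is full'' and avoids having to say anything separate about the Weil pairing matching the standard form.
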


\begin{proof}
Equip $({\mathbb Z}/\ell)^{2g}$ with the standard symplectic pairing $\langle \cdot,\cdot \rangle_{\rm std}$, and let
\begin{equation*}
S_{[\ell]} := {\rm Isom}( ({\rm Pic}^0({\mathcal C}/S)[\ell], \langle\cdot,\cdot\rangle_\lambda), (({\mathbb Z}/\ell)_S^{2g},\langle \cdot,\cdot \rangle_{{\rm std}})).
\end{equation*}
There is an $\ell$th root of unity on $S$, so $S_{[\ell]} \rightarrow S$ is an \'etale Galois cover, possibly
disconnected, with covering group ${\rm Sp}_{2g}({\mathbb Z}/\ell)$.
By Theorem \ref{Tmono}, ${\sf M}_\ell(S) \simeq {\rm Sp_{2g}}({\mathbb Z}/\ell)$.
The geometric interpretation of this is that 
$S_{[\ell]}$ is irreducible.

Suppose $\tilde{\xi}$ is a point of $S_{[\ell]}$.  
Then $\tilde{\xi}$ represents a curve $X$, together with an isomorphism 
between $({\mathbb Z}/\ell)^{2g}$ and ${\rm Pic}^0(X)[\ell]$.  The isomorphism identifies $(1,0, \ldots, 0)$ with a point of order $\ell$ on $J_X$.
It follows that $\tilde{\xi}$ determines an unramified ${\mathbb Z}/\ell$-cover $\pi: Y \to X$.
Thus there is a forgetful morphism $F: S_{[\ell]} \to \Pi_\ell^{-1}(S)$.
Then $\Pi_\ell^{-1}(S)$ is irreducible because $S_{[\ell]}$ is.
\end{proof}

\subsection{Key degeneration result}

\begin{proposition} \label{TdegenW}
Let $g \geq 3$ and $0 \leq f \leq g$.
Let $Q$ be an irreducible component of $\bar W_g^f$.
\begin{enumerate}
\item Then $Q$ intersects $\Delta_{i:g-i}$ for each $1 \le i \le \lfloor g/2 \rfloor$.
\item 
More generally, if $(f_1,f_2)$ is a pair such that $f_1+f_2 =f$ and $0 \leq f_1 \leq i$ and $0 \leq f_2 \leq g-i$, 
then $Q$ contains the image of a component of $\kappa_{i;g-i}(\bar W_{i;1}^{f_1} \times \bar W_{g-i;1}^{f_2})$.
\end{enumerate}
\end{proposition}

\begin{proof}
By Proposition \ref{Pirred}, $Q=\Pi_\ell^{-1}(S)$ for some irreducible component $S$ of $\bar {\mathcal M}_g^f$.
By Proposition \ref{PinterDelta1}, $S$ contains the image of a component of 
$\bar {\mathcal M}_{i;1}^{f_1} \times \bar {\mathcal M}_{g-i;1}^{f_2}$.
Consider a point $\xi$ of $Q$ lying above this image.
Then $\xi$ represents an unramified ${\mathbb Z}/\ell$-cover $\pi:Y \to X$ as in Section \ref{SDeltaig-i}.
By definition, $X$ is a stable curve having components $C_1$ and $C_2$ of genera $i$ and $g-i$ and $p$-ranks $f_1$ and $f_2$.

The ${\mathbb Z}/\ell$-cover $\pi$ is determined by a point of order $\ell$ on $J_X$.
Now $J_X \simeq J_{C_1} \oplus J_{C_2}$, so $J_X[\ell] \simeq J_{C_1}[\ell] \oplus J_{C_2}[\ell]$.
The point $\xi$ is in $\Delta_i$ or $\Delta_{g-i}$ if and only if the point of order $\ell$ is in either $J_{C_1}[\ell] \oplus \{0\}$ or $\{0\} \oplus J_{C_2}[\ell]$.
There are $\ell^{2g}-\ell^{2i}-\ell^{2(g-i)} +1$ points of order $\ell$ which do not have this property.
Since $Q=\Pi_\ell^{-1}(S)$, without loss of generality, one can suppose that the point of order $\ell$ is one of these
or, equivalently, that $\xi$ is in $\Delta_{i:g-i}$, completing part (1).

Every component of $Q \cap \Delta_{i:g-i}$ has dimension $2g-4+f$.  By Lemma \ref{LdimW}(3), this 
equals the dimension of the components of
 $\kappa_{i:g-i}(\bar W_{i;1}^{f_1} \times \bar W_{g-i;1}^{f_2})$, finishing part (2).
\end{proof}

\subsection{Ordinary Pryms}

The first theorem is that the Prym of an unramified ${\mathbb Z}/\ell$-cover of a
generic curve of genus $g$ and $p$-rank $f$ is ordinary, for any $0 \leq f \leq g$.

\begin{theorem} \label{TW}
Let $\ell$ be a prime distinct from $p$; let $g \ge 2$ and $0 \leq f \leq g$ with $f \not = 0$ if $g = 2$. 
If $Q$ is an irreducible component of $W_g^f=\Pi_\ell^{-1}({\mathcal M}_g^f)$, 
then the Prym of the cover represented by the generic point of $Q$ is ordinary (with $p$-rank $f'_Q=(\ell-1)(g-1)$).
\end{theorem}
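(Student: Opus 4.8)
The plan is to induct on $g$, degenerating each irreducible component $Q$ of $W_g^f$ to the boundary divisor $\Delta_{i:g-i}[\bar\R_{g,\ell}]$ and combining Lemma~\ref{LDelig-i} with the inductive hypothesis; the base case $g=1$ is trivial since there the Prym is $0$-dimensional.

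First I would set up. By Proposition~\ref{Pirred} there is an irreducible component $S$ of $\M_g^f$ with $Q=\Pi_\ell^{-1}(S)$, and since $\Pi_\ell\colon\bar\R_{g,\ell}\to\bar\M_g$ is finite flat with $\Pi_\ell^{-1}(S)$ irreducible, the closure $\bar Q$ of $Q$ equals $\Pi_\ell^{-1}(\bar S)$. Over the open dense locus $W_g^f\cap Q$ of $\bar Q$ the Prym is an abelian scheme, so the locus where it is ordinary is open there; hence it suffices to show that this locus is non-empty in $W_g^f\cap Q$, for then it is dense and contains the generic point of $Q$.

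Next I would carry out the degeneration. For $g\ge3$, Theorem~\ref{TdegenW}(2) applied with $i=1$ shows that for any pair $(f_1,f_2)$ with $f_1+f_2=f$, $0\le f_1\le1$ and $0\le f_2\le g-1$, the closure $\bar Q$ contains, as a divisor, the image $D$ of an irreducible component $V_1\times V_2$ of $\bar W_{1;1}^{f_1}\times\bar W_{g-1;1}^{f_2}$ under $\kappa_{1:g-1}$. I would choose $(f_1,f_2)$ so that the inductive hypothesis applies to both factors — possible for every $f$ when $g\ge4$, and for every $f\ge1$ when $g=3$; the cases $g=2$ (using the analogous statement, i.e.\ degeneration to $\Delta_{1:1}$) and $g=3$, $f=0$ (degenerating once more, to a chain of $\ZZ/\ell$-covers of three elliptic curves, where the toric contributions alone force the Prym $p$-rank to be maximal) are treated separately. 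The generic point of $D$ is the image of the pair of generic points of $V_1,V_2$, whose underlying covers have ordinary Pryms by induction (the $p$-rank of a Prym is unaffected by the markings). Hence by Lemma~\ref{LDelig-i} the Prym at the generic point of $D$ has $p$-rank $0+(\ell-1)(g-2)+(\ell-1)=(\ell-1)(g-1)$: it is an ordinary semi-abelian variety with toric part of rank $\ell-1$ and ordinary abelian part of dimension $(\ell-1)(g-2)$.

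The step I expect to be the main obstacle is passing from the boundary to the interior: deducing that the generic point of $Q$ has ordinary Prym from the fact that the generic point of the codimension-one locus $D\subseteq\bar Q$ does. One cannot simply invoke semicontinuity, because the $p$-rank of a semi-abelian scheme is not lower semicontinuous — a curve can acquire a supersingular handle when a node is smoothed. The argument must analyse the universal Prym near the generic point of $D$ as a one-parameter semi-abelian degeneration with ordinary special fibre and show that the ordinary locus propagates into $W_g^f\cap Q$; concretely, the $\ell$ nodes of the covering curve $Y$ lying over the smoothed node of $X$ give rise to an $(\ell-1)$-dimensional abelian summand carrying an action of $\ZZ[\zeta_\ell]$, whose generic deformation is ordinary for every $p\ne\ell$, so the Prym $p$-rank does not drop at the generic point of $Q$. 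Openness of the ordinary locus over $\R_{g,\ell}$ then finishes the proof. A secondary point, on which the whole scheme rests, is that $\bar Q=\Pi_\ell^{-1}(\bar S)$: this uses the $\ZZ/\ell$-monodromy of $S$ being all of $\mathrm{Sp}_{2g}(\ZZ/\ell)$ (Theorem~\ref{Tmono}), which is precisely where the hypothesis ``$f\ne0$ if $g=2$'' enters.
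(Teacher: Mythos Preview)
Your overall strategy --- induct on $g$, degenerate $\bar Q$ to $\Delta_{i:g-i}$ via Theorem~\ref{TdegenW}, and combine Lemma~\ref{LDelig-i} with the inductive hypothesis --- is exactly the paper's. But the step you flag as ``the main obstacle'' is not an obstacle: the $p$-rank of a semi-abelian scheme \emph{is} lower semicontinuous on the base. This is precisely the content of the $p$-rank stratification recalled in Section~\ref{Sppt} (via \cite[Theorem~2.3.1]{katzsf} and \cite[Lemma~2.1]{AP:08}), and the paper simply writes ``by semi-continuity $f'_Q \geq f'_{\partial Q}$'' and concludes, since $f'_{\partial Q}=(\ell-1)(g-1)=\dim P_\pi$ is already the maximum possible value. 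Your proposed counterexample --- smoothing a node and acquiring a supersingular handle --- would require a family of semi-abelian varieties whose generic fibre has $p$-rank strictly \emph{smaller} than the special fibre's, and no such family exists; concretely, every curve represented by a point of $\Delta_0[\bar\M_g]$ has $p$-rank at least~$1$ by \eqref{noncompactisoF}, so no $p$-rank-$0$ smooth curve specializes there. Your deformation-theoretic analysis of the $\ell$ nodes is therefore unnecessary (and, as sketched, not obviously complete: the claim that the new $(\ell-1)$-dimensional piece is generically ordinary is exactly what semicontinuity gives for free).

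One place where your extra care is warranted: the case $g=3$, $f=0$. With $i=1$ the second factor has genus~$2$ and $p$-rank~$0$, which lies outside the inductive hypothesis, and the paper's remark that ``one can avoid the choice $i=2$ and $f_1=0$'' does not obviously dispose of the symmetric obstruction on the $(g-i)$-side. Your fix --- a further degeneration to a chain of $\ZZ/\ell$-covers of three elliptic curves, where the toric contributions alone already give Prym $p$-rank $2(\ell-1)$ --- is a clean way to handle it, and again semicontinuity carries this back to the generic point of $Q$.
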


\begin{proof} The proof is by induction on $g$, with the base case $g=1$ being vacuous.
Suppose the result is true for all $1 \leq g' < g$.
Let $Q$ be an irreducible component of $W_{g}^f$.  
Let $\bar Q$ be its closure in $\bar {\mathcal R}_{g, \ell}$.
Choose $i$ such that $1 \leq i \leq g-1$ and a pair $(f_1,f_2)$ such that $f_1+f_2=f$ and $0 \leq f_1 \leq i$ and $0 \leq f_2 \leq g-i$.
Note that one can avoid the choice $i=2$ and $f_1=0$.
By Proposition \ref{TdegenW}, 
$\bar Q$ contains a component of $\kappa_{i:g-i}(\bar W_{i,1}^{f_1} \times \bar W_{g-i, 1}^{f_2})$.

Let $f'_Q$, $f'_{\partial Q}$, $f'_1$, and $f'_2$ respectively denote
the $p$-rank of the Prym of the cover represented by the generic point of a component of 
$Q$, $\partial Q$, $\bar W_{i;1}^{f_1}$ and $\bar W_{g-i; 1}^{f_2}$.
By semi-continuity $f'_Q \geq f'_{\partial Q}$.
By Lemma \ref{LDelig-i}, $f'_{\partial Q} = f'_1+f'_2 + (\ell-1)$. 
By the inductive hypothesis, $f'_1=(\ell-1)(i-1)$ and $f'_2=(\ell-1)(g-i-1)$.  
Thus $f'_Q \geq (\ell-1)(g-1)$ which equals $\dim(P_\pi)$.
\end{proof}

Theorem \ref{Tintro1}(1) follows from Proposition \ref{Pirred} and Theorem \ref{TW}. 

\section{Purity results} \label{Spurity}

\subsection{A stratification of $\bar{\mathcal R}_{g}$ by the $p$-ranks of $X$ and $P_\pi$}

When $\ell=2$ and $p$ is odd, we consider the stratification of ${\mathcal R}_{g,2}$ by $p$-rank.
Proposition \ref{Phalfway} gives a lower bound for the dimension of the $p$-rank strata.
Since this section is only about double covers, the subscript $\ell=2$ is dropped from the notation for simplicity.

If $\pi: Y \to X$ is an unramified double cover, let $f'$ denote the $p$-rank of $P_\pi$.
Let $\tilde{{\mathcal A}}_{g-1}^{f'}$ denote the $p$-rank $f'$ stratum of 
the toroidal compactification $\tilde{{\mathcal A}}_{g-1}$ of the moduli space ${\mathcal A}_{g-1}$ 
of principally polarized abelian varieties of dimension $g-1$.
The Prym map $Pr_{g}: \bar{\mathcal R}_{g} \rightarrow \tilde{{\mathcal A}}_{g-1}$
sends the point representing $\pi:Y \rightarrow X$ to 
the point representing the principally polarized abelian variety $P_\pi$. 
The image and fibers of $Pr_g$ are well understood only for $2 \leq g \leq 6$.

\begin{definition}
Let $0 \leq f \leq g$ and $0 \leq f' \leq g-1$.  Define $\bar{V}_g^{f'}=Pr_{g}^{-1}(\tilde{{\mathcal A}}_{g-1}^{f'})$
and $V_g^{f'} = \bar{V}_g^{f'} \cap {\mathcal R}_g$. 
Define $\bar{{\mathcal R}}_{g}^{(f,f')}=\bar{W}_g^f \cap \bar{V}_g^{f'}$
and ${\mathcal R}_g^{(f,f')}= \bar{{\mathcal R}}_{g}^{(f,f')} \cap {\mathcal R}_g$. 
\end{definition}

Hence, the points of $V_g^{f'}$ (resp.\ ${\mathcal R}_{g}^{(f,f')}$) represent unramified double
covers $\pi:Y \to X$ of a smooth curve $X$ of genus $g$ (resp.\ and $p$-rank $f$) 
such that $P_\pi$ has $p$-rank $f'$.

By Theorem \ref{TW}, if $p \geq 3$, then ${\mathcal R}_g^{f, g-1}$ is non-empty of dimension $2g-3+f$ 
for all $g \geq 2$ and $0 \leq f \leq g$ with $f \not = 0$ if $g=2$.
Applying purity yields the following result. 

\begin{proposition} \label{Phalfway}
Let $\ell =2$, $p \geq 3$, $g \geq 2$ and $0 \leq f \leq g$.
For $0 \leq f' \leq g-2$, 
if ${\mathcal R}_{g}^{(f,f')}$ (resp.\ $\bar {\mathcal R}_g^{(f,f')}$) is non-empty, 
then each of its components has dimension at least $g-2+(f+f')$.
\end{proposition}

\begin{proof}
Consider the forgetful morphism $\tau_{g}: {\mathcal R}_{g} \to {\mathcal M}_{2g-1}$ which sends the point representing 
$\pi:Y \to X$ to the point representing $Y$.
If $g \geq 2$, then the genus of $Y$ is at least $3$ and ${\rm Aut}_k(Y)$ is finite.
So $\tau_{g}$ is finite-to-$1$ and its image has dimension $3g-3$.

Let $U$ be a component of ${\mathcal R}_g^{(f,f')}$ and let $Z$ be its image under $\tau_g$.
If $\pi:Y \to X$ is represented by a point of $U$, then the $p$-rank of $Y$ is $f+f'$.
Thus $Z$ is contained in ${\mathcal M}_{2g-1}^{f+f'}$.
Note that $Z$ is a component of ${\rm Im}(\tau_{g}) \cap {\mathcal M}_{2g-1}^{f+f'}$.
By Lemma \ref{Lintersect},
\[{\rm codim}({\rm Im}(\tau_{g}) \cap {\mathcal M}_{2g-1}^{f+f'}, {\rm Im}(\tau_g)) \leq {\rm codim}({\mathcal M}_{2g-1}^{f+f'}, {\mathcal M}_{2g-1}).\]

By \cite[Theorem 2.3]{FVdG}, ${\rm codim}({\mathcal M}_{2g-1}^{f+f'}, {\mathcal M}_{2g-1}) = 2g-1-(f+f')$.
Now ${\rm dim}(U)={\rm dim}(Z)$ and so
\[{\rm dim}(U) \geq 3g-3 - (2g-1-(f+f'))=g-2+f+f'.\]

The statement is also true for $\bar {\mathcal R}_g^{(f,f')}$ since ${\mathcal R}_{g}^{(f,f')}$ is open and dense in it.
\end{proof}

\begin{remark}The hypothesis in Proposition \ref{Phalfway} that ${\mathcal R}_{g}^{(f,f')} \neq \emptyset$ is not superfluous.
When $p=3$, then ${\mathcal R}_{2}^{(0,0)} = \emptyset$ \cite[Theorem 6.1]{FVdG}.
\end{remark}

\begin{remark}
The strategy of the proof of Proposition \ref{Phalfway} does not give much information 
for covers of degree $\ell \geq 3$
because $g_Y$ is too big relative to $3g-3$.
\end{remark}

\subsection{Increasing the $p$-rank of the Prym variety}

We show that geometric information about $\mathcal{R}_g^{(f,f')}$ can be used
to deduce geometric information about $\mathcal{R}_g^{(f,F')}$ when $f' \leq F' \leq g-1$. 

\begin{proposition} \label{Pgoup}
Let $g \geq 2$.
If $\mathcal{R}_g^{(f, f')}$ is non-empty and has a component of dimension $g-2 +f +f'$
in characteristic $p$, 
then $\mathcal{R}_g^{(f, F')}$ is non-empty and has a component of dimension $g-2 +f+F'$
in characteristic $p$
for each $F'$ such that $f' \leq F' \leq g-1$.
\end{proposition}

\begin{proof}
Let $S_{f'}$ be a component of $\mathcal{R}_g^{(f, f')}$ having dimension $g-2 +f +f'$.
Then $S_{f'}$ is contained in $W_g^f:=\Pi^{-1}(\mathcal{M}_g^f)$. 
Each component of the latter has dimension $2g-3+f$ since $\Pi:\mathcal{R}_g \to \mathcal{M}_g$ is finite and flat
and $\mathcal{M}_g^f$ is pure of dimension $2g-3+f$ by \cite[Theorem 2.3]{FVdG}.
Thus $S_{f'}$ has codimension $g-1-f'$ in $W_g^f$.
Also, the generic geometric point of $W_g^f$ represents a cover $\pi$ such that the Prym 
$P_\pi$ has $p$-rank $g-1$ by Theorem \ref{TW}.

Consider the forgetful morphism $\tau_{g}: {\mathcal R}_{g} \to {\mathcal M}_{2g-1}$ 
which sends the point representing $\pi:Y \to X$ to the point representing $Y$.
Since $g \geq 3$, the map $\tau_{g}$ is finite-to-$1$. 
Now $\tau_g(S_{f'}) \subset \mathcal{M}_{2g-1}^{f+f'}$ and 
$\tau_g(W_g^f) \subset \mathcal{M}_{2g-1}^{f+g-1}$.
Thus the $p$-ranks and the dimensions for $\tau_g(S_{f'})$ and $\tau_g(W_g^f)$ both differ by
exactly $g-1-f'$.  

By purity, the $p$-rank can only change in codimension $1$.
It follows that there is a nested sequence $T_{f'} \subset \cdots \subset T_i \subset \cdots \subset T_{g-1}$, 
indexed by $i$ from $f'$ to $g-1$,
with $T_{f'}=\tau_g(S_0)$ and $T_{g-1} = \tau_g(W_g^f)$, 
such that ${\rm dim}(T_i)=g - 2 + f + i$ and the generic geometric point 
of $T_i$ represents a curve $Y_i$ with $p$-rank $f+i$.

Then $T_i$ is in the image of $\tau_g$, so there is a sequence 
$S_{f'} \subset \cdots \subset S_i  \cdots \subset W_g^f$ such that $\tau_g(S_i)=T_i$.  
Thus ${\rm dim}(S_i)=g - 2 + f + i$.
Also, the generic geometric point of $S_i$ represents an unramified double cover $\pi:Y_i \to X_i$ 
such that $Y_i$ has $p$-rank $f+i$ and $X_i$ has genus $g$ and $p$-rank $f$; it follows that $P_{\pi_i}$
has $p$-rank $i$.
Thus $R_g^{(f,f')}$ contains an open dense subset of $S_i$, which we denote again by $S_i$
at the risk of causing confusion.

The next claim is that $S_i$ is open and dense in a component of $R_g^{(f,i)}$ 
for $f' \leq i \leq g-1$.
This is true for $i=f'$ by hypothesis.  If it is not true for all $i$, let $j$ be the minimal index for which 
it is false.  Then $S_{j-1}$ has codimension at least 2 inside a component $\Sigma$ of 
$R_g^{(f,j)}$, and $\tau_g(S_{j-1})$ has codimension at least $2$ in $\tau_g(\Sigma)$.
This contradicts purity, since the $p$-rank drops by $1$ on a subset of codimension $2$.
This completes the proof. 
\end{proof}

\section{Results for low genus when $\ell=2$} \label{Slowgenus}

This section contains results about non-ordinary Pryms of unramified double covers of 
curves of low genus $g=2$ and $g=3$.  
Since this section is only about double covers, the subscript $\ell=2$ is dropped from the notation for simplicity.

Recall that ${\mathcal A}_g^f$ is irreducible for all $g \geq 2$ and $0 \leq f \leq g$ except $(g,f)=(2,0)$ \cite[Theorem A]{chaioort}.  When either $g=2$, $f=1,2$ or $g=3$, $0 \leq f \leq 3$, 
the image of ${\mathcal M}_g^f$ under the Torelli map is open and dense in ${\mathcal A}_g^f$ and thus 
${\mathcal M}_g^f$ is irreducible as well.

%%%%%%%%%%%%%%%%%%%%%%%%%%%%%%%%%%%%%%%%%%%%%%%%%%%%%%%%%%%%%%%%%%%%%%%%%%%%%%%%
\subsection{Base Case: Genus 2}\label{BaseCase}%%%%%%%%%%%%%%%%%%%%%%%%%%%%%%%%%%%%%%%%
%%%%%%%%%%%%%%%%%%%%%%%%%%%%%%%%%%%%%%%%%%%%%%%%%%%%%%%%%%%%%%%%%%%%%%%%%%%%%%%%

This section contains a proof that ${\mathcal R}_2^{(f,f')}$ is non-empty with the expected dimension for all six choices of $(f,f')$
when $p \geq 5$.

\begin{proposition}\label{prop:basecase}
Let $g=2$, $0 \leq f \leq 2$, and $0 \leq f' \leq 1$.
Then ${\mathcal R}_2^{(f,f')}$ is non-empty (except when $p=3$, $f=0,1$, and $f'=0$)
and each of its components has dimension $f+f'$.
\end{proposition}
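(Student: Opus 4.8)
The plan is to establish the six cases of Proposition \ref{prop:basecase} by exhibiting, for each $(f,f')$ with $0 \le f \le 2$ and $0 \le f' \le 1$, a suitable family of unramified double covers $\pi: Y \to X$ of genus-$2$ curves and showing the family has the expected dimension $f+f'$. The key structural fact is that when $g=2$, an unramified double cover $\pi: Y \to X$ has $Y$ of genus $3$, and $P_\pi$ is a principally polarized abelian surface; moreover the $p$-rank of $Y$ is $f+f'$. So the statement is really about realizing all pairs $(p\text{-rank of }X, p\text{-rank of }P_\pi)$ with the correct number of moduli.

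First I would dispose of the ordinary case $f'=1$: by Theorem \ref{TW} (with $\ell=2$, and noting $f \ne 0$ is not required here since we allow $f=0$ only when combined with... actually Theorem \ref{TW} needs $f \ne 0$ when $g=2$), the generic unramified double cover of a generic genus-$2$ curve of $p$-rank $f$ has ordinary Prym, so $\R_2^{(f,1)}$ is non-empty of dimension $2g-3+f = f+1$ for $f=1,2$. For $f=0$: here I cannot invoke Theorem \ref{TW} directly, so I would instead use the purity bound (Proposition \ref{Phalfway}) together with a degeneration or an explicit construction — e.g. take a genus-$2$ curve $X$ of $p$-rank $0$ (these exist, forming a $1$-dimensional family by \cite{FVdG}) and analyze its $2^4 - 1$ double covers; one shows at least one has ordinary Prym unless $p=3$, matching the excluded case $(0,0)$ and also needing care at $(0,1)$. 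Actually the cleanest route for the $f'=1$, $f=0$ case is to cite \cite{FVdG} directly, since these genus-$2$ computations are exactly what that paper addresses.

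For the non-ordinary cases $f'=0$, the expected dimension is $f$, i.e. $0, 1, 2$ for $f=0,1,2$. Here I would use the relationship with $\M_3$: the image of $\tau_2: \R_2 \to \M_3$ consists of genus-$3$ curves $Y$ admitting a free involution, equivalently curves $Y$ with a non-trivial unramified double cover structure, and $\dim \R_2 = 3$. A point of $\R_2^{(f,0)}$ maps to a genus-$3$ curve $Y$ of $p$-rank $f$ (since $f+f'=f$) that is a double cover of a genus-$2$ curve of $p$-rank $f$. One natural construction: take $X$ a genus-$2$ curve of $p$-rank $f$ whose Jacobian has a suitable supersingular/non-ordinary part, choose the $2$-torsion point $\eta$ so that the Prym $P_\pi = \ker(1+\sigma)^0$ lands in the non-ordinary locus. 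Concretely, since $J_Y \sim J_X \oplus P_\pi$, requiring $f(P_\pi)=0$ while $f(J_X)=f$ forces $f(Y)=f$; such $Y$ exist and I would count moduli by restricting the Torelli-type image. Alternatively — and this is probably the approach the paper takes — use the boundary: degenerate to $\Delta_{1:1}[\bar\R_{2}]$, where by Lemma \ref{LDelig-i} the Prym of a cover represented there is an extension of $P_{\pi_1} \oplus P_{\pi_2}$ by a torus of rank $1$; with $P_{\pi_i}$ Pryms of double covers of genus-$1$ curves (which are just elliptic curves), one gets explicit control of the $p$-rank and dimension, then smooths.

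**The main obstacle** will be the non-ordinary cases, particularly $(f,f')=(0,0)$ and $(1,0)$, where one must not only produce an example but also pin down the dimension as exactly $f$ (not larger) and handle the genuine exceptions at $p=3$. The dimension-from-above is the purity bound of Proposition \ref{Phalfway} giving $\ge g-2+f+f' = f$; the dimension-from-below (non-emptiness with at least one such component, and that no component is too big) requires either an explicit parametrization of non-ordinary genus-$3$ curves with free involution or a careful boundary analysis. The subtlety with $p=3$ — where $\R_2^{(0,0)}$ and $\R_2^{(1,0)}$ are empty by \cite[Theorem 6.1]{FVdG} — signals that the construction must genuinely use $p \ge 5$, likely through the existence of appropriate elliptic curves or abelian surfaces with prescribed $p$-rank and extra structure that fail to exist in characteristic $3$. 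I expect the proof to proceed by an explicit case-by-case construction citing \cite{FVdG} heavily for the genus-$2$ and genus-$3$ facts, combined with the purity lower bound for the dimension count, and a direct verification (or citation) that the $p=3$ exclusions are exactly the stated ones.
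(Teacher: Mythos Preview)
Your approach has the right overall shape (case-by-case, purity for the lower bound, citations to \cite{FVdG} for the hardest points), but there are two concrete gaps in your plan for the $f'=0$ cases.

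First, your suggested degeneration to $\Delta_{1:1}[\bar\R_2]$ cannot produce points with $f'=0$. By Lemma~\ref{LDelig-i}, the Prym of any cover represented there has $p$-rank $f_1'+f_2'+(\ell-1)$; here $\ell=2$ and each $P_{\pi_i}$ is zero-dimensional (the Prym of a double cover of an elliptic curve), so the $p$-rank is always $0+0+1=1$. Thus $\Delta_{1:1}$ lies entirely in $\bar V_2^1$ and gives no information about $V_2^0$.

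Second, your primary construction for $f'=0$ (``choose $\eta$ so that $P_\pi$ is supersingular'') is exactly the statement to be proved, not a construction. The paper's key idea here is the \emph{fiber product} (or bielliptic) construction: for $g=2$ every unramified double cover arises as the fiber product of the hyperelliptic map $X\to\mathbb P^1$ branched at six points with an elliptic map $E\to\mathbb P^1$ branched at a four-element subset of those six. This makes $P_\pi$ isogenous to $E$, so one simply takes $E$ supersingular to land in $V_2^0$, and the two free branch points give $\dim V_2^0=2$ directly. This yields $(2,0)$; then $(0,0)$ is cited from \cite[Theorem~6.1]{FVdG}, and $(1,0)$ follows by purity applied to $\R_2^{(0,0)}\subset V_2^0$.

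A smaller point: your logic for $f'=1$ via Theorem~\ref{TW} works for $f=1,2$, but the paper instead bootstraps from the $f'=0$ cases. It first establishes $(0,0)$ by citation, hence $(0,1)$ is the open complement in $W_2^0$ (dimension $1$); this in turn forces the generic point of $\R_2$ (hence of $W_2^2$, hence of $W_2^1$) to have $f'=1$, giving $(2,1)$ and $(1,1)$. The ordering and interdependence of the six cases is what makes the dimension \emph{upper} bounds come out --- purity alone only gives the lower bound $\dim\ge f+f'$, and you still need $W_2^f$ irreducible with generic $f'=1$ to cap the dimension of $\R_2^{(f,0)}$.
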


\begin{figure}[h]
\begin{tikzpicture}[node distance=2cm,line width=.8pt]

\node(00) at (0,0)     {${\mathcal R}_2^{(0,0)}$};
\node(10)       [right of= 00] {${\mathcal R}_2^{(1,0)}$};
\node(20)      [right of=10]  {${\mathcal R}_2^{(2,0)} $};
\node(01)      [above of=00]       {${\mathcal R}_2^{(0,1)}$};
\node(11)      [right of=01]       {${\mathcal R}_2^{(1,1)}$};
\node(21)      [right of=11]       {${\mathcal R}_2^{(2,1)}$};

\draw(00)      -- (10);
\draw(00)      -- (01);
\draw(01)      -- (11);
\draw(11)      --  (21);
\draw(11)      --  (10);
\draw(21)      --  (20);
\draw(10) -- (20);
\end{tikzpicture}
\caption{The $p$-rank stratification of ${\mathcal R}_2$}

\end{figure}
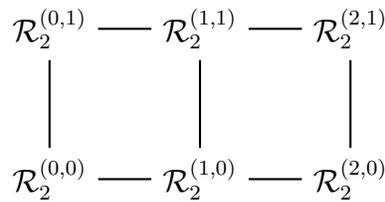

\begin{proof}
By Lemma \ref{LdimW}, $W_2^f$ is non-empty with dimension $1+f$ for $0 \leq f \leq 2$.
If $f=1,2$, then ${\mathcal M}_2^f$ is irreducible and so $W_2^f$ is irreducible by Proposition \ref{Pirred}.

%When $p=3$, 
%if $X$ is a smooth curve of genus $2$ having an unramified double cover with supersingular Prym, 
%then $X$ is ordinary 
By \cite[Section 7.1]{FVdG}, ${\mathcal R}_2^{(1,0)}$ and ${\mathcal R}_2^{(0,0)}$ are empty when $p=3$.

\begin{itemize}
\item{(0,0).}
By \cite[Theorem 6.1]{FVdG}, if $p \geq 5$, then ${\mathcal R}_2^{(0,0)}$ is nonempty with dimension $0$.

\item{(0,1).}
When $f=0$, then $W_2^0={\mathcal R}_2^{(0,1)} \cup {\mathcal R}_2^{(0,0)}$. 
So ${\mathcal R}_2^{(0,1)}$ is open and dense in $W_2^0$ and thus ${\rm dim}({\mathcal R}_2^{(0,1)})=1$. 

\item{(2,1).} Since $W_2^2$ is open and dense in ${\mathcal R}_2$, which contains ${\mathcal R}_2^{(0,1)}$, 
the generic point of $W_2^2$ has $f'=1$.
This implies that ${\mathcal R}_2^{(2,1)}$ is open and dense in $W_2^2$ and ${\rm dim}({\mathcal R}_2^{(2,1)})=3$.

\item{(1,1).}
Purity, applied to ${\mathcal R}_2^{(0,1)} \subset {\mathcal R}_2$,
shows that ${\mathcal R}_2^{(1,1)}$ is non-empty with dimension $2$.
Thus ${\mathcal R}_2^{(1,1)}$ is open and dense in $W_2^1$. 

\item{(2,0).}
The fiber product construction in Section \ref{Sexample} shows that ${\rm dim}(V_2^0)=2$.
Namely, a point of ${\mathcal A}_1^0$ is represented by a supersingular elliptic curve $E_\lambda$.
By \eqref{Eg2hypcons} in Section \ref{Sexample}, 
there is a 2-dimensional family of curves $X$ with an unramified double cover $\pi:Y \to X$
with $P_\pi \sim E_\lambda$. 

Note that ${\mathcal R}_2^{(2,0)}$ is non-empty and open and dense in $V_2^0$:
if $p=3$, then ${\mathcal R}_2^{(2,0)} =V_2^0$ since ${\mathcal R}_2^{(0,0)}$ and ${\mathcal R}_2^{(1,0)}$ are empty;
if $p \geq 5$, then $W_2^1$ is irreducible with dimension $2$ and generic $f'=1$ and so
no component of $V_2^0$ is contained in $W_2^1$. 

\item{(1,0).}
If $p \geq 5$, applying purity to ${\mathcal R}_2^{(0,0)} \subset V_2^0$ shows that ${\mathcal R}_2^{(1,0)}$ is non-empty.
Also ${\mathcal R}_2^{(1,0)} \subset W_2^1$ which is irreducible of dimension $2$ and generic $f'=1$.  Thus
${\rm dim}({\mathcal R}_2^{(1,0)}) \leq 1$.  
By Proposition \ref{Phalfway}, every component of ${\mathcal R}_2^{(1,0)}$ has dimension $1$.
\end{itemize}
\end{proof}

\begin{remark}
When $g =2$, then 
each of the 15 connected unramified ${\mathbb Z}/2$-covers $\pi:Y \to X$ arises via a fiber product construction.
For a fixed (small) prime $p$, it is thus computationally feasible to find equations for curves represented by
points of ${\mathcal R}_2^{(f,f')}$.  However, if $f+f'$ is small, then it is not feasible to prove that 
${\mathcal R}_2^{(f,f')}$ is non-empty for all primes $p$
using a computational perspective; this is explained in more detail in Section \ref{Sexample}.
\end{remark}

\begin{remark} \label{Rsingular}
%For all odd primes $p$, there is a one-dimensional family of singular curves of genus $2$ and $p$-rank $f=1$ 
%having an unramified double cover with supersingular Prym.
Consider ${\mathcal M}_{1;2}^0$, whose points represent supersingular elliptic curves with $2$ marked points. 
Then $\kappa_{0, II}({\mathcal M}_{1;2}^0)$ has dimension $1$ and is fully contained in $\partial {\mathcal R}_2^{(1,0)}$.
\end{remark}

\subsection{Base case: $g=3$}

\begin{proposition}\label{P:NE}
Let $\ell = 2$ and $p \geq 3$.  Let $g=3$ and $0 \leq f \leq 3$ (with $f \not = 0,1$ when $p=3$).  
Then $\Pi^{-1}({\mathcal M}_3^f)$ is irreducible and ${\mathcal R}_3^{(f,1)} = \Pi^{-1}({\mathcal M}_3^f) \cap V_3^{1}$ 
is non-empty with dimension $2+f$.
\end{proposition}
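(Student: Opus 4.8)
The strategy is to establish the $g=3$ base case by a combination of degeneration to the boundary (using the component $\Delta_{1:2}$) to prove non-emptiness, purity to control the dimension from below, and the ordinarity result Theorem \ref{TW} together with the monodromy-based irreducibility Proposition \ref{Pirred} to control it from above. First I would note that $\M_3^f$ is irreducible for all $0 \le f \le 3$ (this is recorded in the text at the start of Section \ref{Slowgenus}, as the Torelli image is open and dense in $\A_3^f$), so by Proposition \ref{Pirred} the preimage $\Pi^{-1}(\M_3^f) = W_3^f$ is irreducible, which is the first assertion. By Lemma \ref{LdimW} it has dimension $2g-3+f = 3+f$, and by Theorem \ref{TW} its generic point has an ordinary Prym, i.e.\ $f' = g-1 = 2$; hence $\R_3^{(f,2)}$ is open dense in $W_3^f$ and $\R_3^{(f,1)}$ is a proper closed subset, so $\dim \R_3^{(f,1)} \le 2+f$.

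\textbf{Non-emptiness via the boundary.} The heart of the argument is to show $\R_3^{(f,1)}$ is non-empty of dimension \emph{exactly} $2+f$. For the lower bound on dimension, once non-emptiness is known, Proposition \ref{Phalfway} (purity) gives that every component of $\R_3^{(f,1)}$ has dimension at least $g-2+(f+f') = 1 + (f+1) = f+2$; combined with the upper bound above this pins the dimension at $2+f$. So everything reduces to producing a point of $\R_3^{(f,1)}$, and I would do this by degenerating to $\Delta_{1:2}[\bar\R_3]$. By Theorem \ref{TdegenW}(2), the closure $\bar W_3^f = \Pi^{-1}(\bar\M_3^f)$ contains the image of a component of $\kappa_{1:2}(\bar W_{1;1}^{f_1} \times \bar W_{2;1}^{f_2})$ for any admissible splitting $f_1 + f_2 = f$ with $0 \le f_1 \le 1$, $0 \le f_2 \le 2$; here $C_1$ has genus $1$ and $C_2$ genus $2$. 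By Lemma \ref{LDelig-i}, the Prym of such a boundary cover has $p$-rank $f_1' + f_2' + (\ell - 1) = f_1' + f_2' + 1$, where $f_1'$ is the $p$-rank of the Prym of the degree-$2$ cover of the genus-$1$ curve $C_1$ (a $0$-dimensional Prym, so $f_1' = 0$) and $f_2'$ is the $p$-rank of the Prym of a degree-$2$ cover of the genus-$2$ curve $C_2$. To land in the $f' = 1$ stratum I need $f_2' = 0$: that is, I need $C_2$ to be a genus-$2$ curve of $p$-rank $f_2 = f$ (taking $f_1 = 0$, $f_2 = f$) — or, if $f = 3$, which is impossible since $f_2 \le 2$, I instead take $f_1 = 1$, $f_2 = 2$ and need $f_2' = 0$ again — carrying an unramified double cover whose Prym has $p$-rank $0$. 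Precisely such curves are supplied by Proposition \ref{prop:basecase}: $\R_2^{(f_2, 0)}$ is non-empty for the relevant $f_2$ provided $p \ge 5$, or $p = 3$ with $f_2 = 2$. This is exactly where the hypothesis "$f \ne 0$ when $p = 3$" enters: when $p = 3$ we can only use $f_2 = 2$, forcing $f \ge 2$ via the splitting (for $f = 2$ take $f_1 = 0, f_2 = 2$; for $f = 3$ take $f_1 = 1, f_2 = 2$), and $\R_2^{(1,0)}$, $\R_2^{(0,0)}$ are empty in characteristic $3$.

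\textbf{Assembling the conclusion.} Putting these together: choose $(f_1, f_2)$ as above so that $\R_2^{(f_2,0)} \ne \emptyset$ (and $f_1 \in \{0,1\}$ with $f_1 + f_2 = f$), pick a point $\tau_1 \in \bar W_{1;1}^{f_1}$ (any marked elliptic curve of the right $p$-rank with a chosen nontrivial $2$-torsion cover) and $\tau_2 \in \bar W_{2;1}^{f_2}$ lying over $\R_2^{(f_2,0)}$; then $\kappa_{1:2}(\tau_1, \tau_2)$ is a point of $\bar\R_3$ representing an unramified double cover $\pi: Y \to X$ of a singular genus-$3$ curve of $p$-rank $f$ whose Prym has $p$-rank $0 + 0 + 1 = 1$. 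By Lemma \ref{dimig-i} this point lies in $\Delta_{1:2}[\bar W_3^f]$, and its Prym $p$-rank being $1$ places it in the closure $\bar\R_3^{(f,1)}$. Since $W_3^f \cap Q$ is open dense in the component $Q$ of $\bar W_3^f$ (Lemma \ref{LdimW}(2)) and the $p$-rank-$1$ locus of the Prym is closed, I would deduce that $\R_3^{(f,1)}$ itself is non-empty — here I use that $\bar W_3^f$ is irreducible with generic Prym $p$-rank $2$, so the $f'\le 1$ locus is a divisor whose generic point represents a \emph{smooth} curve (it cannot be contained in the boundary $\partial\R_3$, whose components have dimension $2g-4+f = 2+f$ equal to the dimension of the whole $f'\le1$ locus, but one checks the boundary $\Delta_{1:2}$-locus of Prym $p$-rank $1$ has strictly smaller dimension $0 + (2+f_2) = 2+f$... indeed the dimensions match, so one must argue more carefully). \emph{The main obstacle I anticipate} is precisely this last point: ruling out that every component of the $f'=1$ locus is swallowed by the boundary, i.e.\ showing the $p$-rank-$1$ Prym locus has a component with smooth generic point. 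I would handle it exactly as in the proof of Theorem \ref{TW}/\ref{T:NE}: use that $\bar\R_3^{(f,1)} \cap \Delta_{1:2}$, being cut out inside the $(2+f)$-dimensional boundary $\Delta_{1:2}[\bar W_3^f]$ by the further condition $f_2' = 0$ on the genus-$2$ factor, has dimension $0 + \dim\R_2^{(f_2,0)} + \dim(\bar W_{1;1}^{f_1}\text{-factor})$, which is strictly less than $2+f$ unless the genus-$2$ Prym-$p$-rank-$0$ locus already fills $\bar W_2^{f_2}$; a dimension count using Proposition \ref{prop:basecase} (where $\dim\R_2^{(f_2,0)} = f_2 < 1 + f_2 = \dim W_2^{f_2}$) shows it does not, so some component of $\bar\R_3^{(f,1)}$ is not contained in the boundary, hence meets $\R_3$, giving a smooth point. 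This forces $\R_3^{(f,1)}$ non-empty, and with the purity lower bound and the ordinarity upper bound its dimension is exactly $2+f$.
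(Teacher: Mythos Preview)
Your approach is correct and essentially identical to the paper's: irreducibility of $\Pi^{-1}(\M_3^f)$ via Proposition \ref{Pirred}, the dimension sandwich from Theorem \ref{TW} (upper bound) and Proposition \ref{Phalfway} (lower bound), and non-emptiness by clutching along $\Delta_{1:2}$ (the paper writes it as $\kappa_{2:1}$) using the genus-$2$ input $\R_2^{(f_2,0)}$ from Proposition \ref{prop:basecase}, followed by the boundary dimension count $f+1 < f+2$ to force the generic point of the containing component to represent a smooth curve. Your appeal to Theorem \ref{TdegenW} is unnecessary here since $W_3^f$ is already irreducible---Lemma \ref{dimig-i} alone places the clutched locus inside $\bar W_3^f$---but it does no harm.
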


\begin{proof}
For $0 \leq f \leq 3$, ${\mathcal M}_3^f$ is irreducible and so $\Pi^{-1}({\mathcal M}_3^f)$ is irreducible by
Proposition \ref{Pirred}.  Also
${\rm dim}(\Pi^{-1}({\mathcal M}_3^f))=3+f$.
By Theorem \ref{TW}, the Prym of the unramified ${\mathbb Z}/2$-cover represented by the 
generic point of $\Pi^{-1}({\mathcal M}_3^f)$ has $p$-rank $f'=2$.
By Proposition~\ref{Phalfway}, if ${\mathcal R}_3^{(f, 1)}=\Pi^{-1}({\mathcal M}_3^f) \cap V_3^1$ is non-empty, then its components 
have dimension $2+f$.

Recall that ${\mathcal R}_2^{(f,0)}$ is non-empty and has dimension $f$ by Proposition \ref{prop:basecase} when $f=1,2$
and by \cite[Theorem 6.1]{FVdG} (except when $p=3$ and $f=0,1$).

If $0 \leq f \leq 2$, consider $I=\kappa_{2:1}({\mathcal R}_{2;1}^{(f,0)} \times {\mathcal R}_{1;1}^{(0,0)}) \subset \bar {\mathcal R}_3$.
Then $I$ is non-empty.  
The choice of base point increases the dimension by $1$ so ${\rm dim}(I)={\rm dim}(\bar {\mathcal R}_2^{(f,0)}) + 1 = f+1$.
If $f=3$, consider
$I=\kappa_{2:1}({\mathcal R}_{2;1}^{(2,0)} \times {\mathcal R}_{1;1}^{(1,0)}) \subset \bar {\mathcal R}_3$.
Then $I\not = \emptyset$ and ${\rm dim}(I)=4$.
%{\rm dim}(\bar {\mathcal R}_{2;1}^{(2,0)}) + 1 = 4$.

By Lemmas \ref{dimig-i} and \ref{LDelig-i}, $I$ is contained in a component $T$ of $\bar{{\mathcal R}}_3^{(f,1)}$.  
Now ${\rm dim}(T) \geq f+2$ by Proposition \ref{Phalfway}.
The generic point of $T$ is not contained in $\Delta_{2:1}[\bar {\mathcal R}_3^{(f,1)}]$ 
since the dimension of the latter is bounded by $f+1$.  
Moreover, from the construction of $I$, 
the generic point of $T$ is not contained in any other boundary component
and is thus in ${{\mathcal R}}_3^{(f,1)}$.
%The fact that ${\rm dim}(T) = f+2$ follows from Lemma \ref{Lintersect} and Lemma \ref{LdimW} because
%\[{\rm codim}(\Delta_{2:1}[\bar {\mathcal R}_3^{(f,1)}], \bar {\mathcal R}_3^{(f,1)}) \leq {\rm codim}(\Delta_{2:1}[\bar {\mathcal R}_3], \bar {\mathcal R}_3) =1.\]
\end{proof}

\section{Non-ordinary Pryms of unramified double covers} \label{Snotord}

%%%%%%%%%%%%%%%%%%%%%%%%%%%%%%%%%%%%%%%%
%%%%%%%%%%%%%%%%%%%%%%%%%%%%%%%%%%%%%%%%%%%%%%%%%%%%%%%%%%%%%%%%%%%%%%%%%%%%%%%%

In this section, we demonstrate the existence of smooth curves of given genus and $p$-rank
having an unramified double cover whose Prym is not ordinary.
Since this section is only about double covers, the subscript $\ell=2$ is dropped from the notation for simplicity.

\subsection{Almost ordinary}

Theorem \ref{Tintro2}(1) follows from Theorem \ref{T:NE}, 
which states that there is a codimension one condition on a generic curve $X$ of genus $g$ and $p$-rank $f$
for which the Prym of an unramified double cover $\pi:Y \to X$ is almost ordinary.
The almost ordinary condition means that the $p$-rank of $P_\pi$ is $g-2={\rm dim}(P_\pi) -1$. 

Consider the stratum
${\mathcal R}_g^{(f,g-2)}=W_g^f \cap V_g^{g-2}$ whose points represent unramified double covers $\pi:Y \to X$ 
such that $X$ is a smooth curve of genus $g$ and $p$-rank $f$ and such that $P_\pi$ has $p$-rank $f'=g-2$
(or, equivalently, such that $P_\pi$ is almost ordinary).

\begin{theorem}\label{T:NE}
Let $\ell=2$ and $p \geq 3$. Let $g \geq 2$ and $0 \leq f \leq g$ (with $f \geq 2$ when $p=3$).  Let $f'=g-2$.
Then ${\mathcal R}_g^{(f, g-2)}$ is non-empty and each of its components has dimension $2g-4+f$.

More generally, let $S$ be a component of ${\mathcal M}_g^f$.  
Then the locus of points of $\Pi^{-1}(S)$ representing unramified double covers for which the Prym $P_\pi$ is almost ordinary 
is non-empty and codimension $1$ in $\Pi^{-1}(S)$ (dimension $2g-4+f$). 
\end{theorem}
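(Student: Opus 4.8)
The plan is to prove this by induction on $g$, using degeneration to the boundary component $\Delta_{i:g-i}$ exactly as in the proof of Theorem \ref{TW}, but now arranging the summands so that the Prym $p$-rank of one factor is one less than maximal. The base cases $g=2$ and $g=3$ are already handled: for $g=2$, Proposition \ref{prop:basecase} gives $\R_2^{(f,0)}$ non-empty of dimension $f$ for all $0 \le f \le 2$ (here $g-2=0$), and for $g=3$, Proposition \ref{P:NE} gives $\R_3^{(f,1)}$ non-empty of dimension $2+f$ (here $g-2=1$). For the dimension statement in general: by Theorem \ref{TW} and Proposition \ref{Pirred}, $W_g^f$ is irreducible on each component of $\M_g^f$ with generic Prym $p$-rank $g-1$, so $\R_g^{(f,g-2)}$ is a proper closed subset and hence has dimension at most $2g-4+f$; the purity bound Proposition \ref{Phalfway} with $f'=g-2$ gives dimension at least $g-2+f+(g-2)=2g-4+f$. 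So the dimension is forced once non-emptiness is known, and the whole difficulty is non-emptiness.

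For the inductive step, suppose $g \ge 4$ and the theorem holds for all smaller genera. Let $S$ be a component of $\M_g^f$ and let $Q = \Pi^{-1}(S)$, which is an irreducible component of $W_g^f$ by Proposition \ref{Pirred}. Choose $i$ with $1 \le i \le \lfloor g/2 \rfloor$ (taking $i=2$ if $g \ge 4$, so that $i \ge 2$ and $g-i \ge 2$, keeping both factors in the inductive range), and choose a splitting $f_1+f_2=f$ with $0 \le f_1 \le i$ and $0 \le f_2 \le g-i$; one must check that such a split can be chosen so that $f_1 \le i$ and $f_2 \le g-i$ hold simultaneously, which is possible since $f \le g$. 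By Theorem \ref{TdegenW}(2), $\bar Q$ contains the image of a component of $\kappa_{i:g-i}(\bar W_{i;1}^{f_1} \times \bar W_{g-i;1}^{f_2})$. Now inside $\bar W_{i;1}^{f_1}$ I take a component of $\bar\R_{i;1}^{(f_1,\,i-2)}$ (non-empty and of the expected dimension by the inductive hypothesis, after accounting for the extra marked point which adds $1$ to the dimension), and inside $\bar W_{g-i;1}^{f_2}$ I take a component of $\bar\R_{g-i;1}^{(f_2,\,g-i-1)}$, i.e.\ where the second Prym is ordinary (non-empty by Theorem \ref{TW} together with Lemma \ref{LdimW}). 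By Lemma \ref{LDelig-i}, the cover represented by the generic point of the image of $\kappa_{i:g-i}$ on this product has Prym $p$-rank $f'_1+f'_2+(\ell-1) = (i-2)+(g-i-1)+1 = g-2$. Call this locus $I$; it is non-empty, lies in $\bar\R_g^{(f,g-2)} \cap \Delta_{i:g-i}$, and by Lemmas \ref{dimig-i}, \ref{LDelig-i} and the dimension formula for the clutching factors has dimension $(f_1+(i-2)) + (f_2+(g-i-1)) + 1 - \text{(correction for markings)} = 2g-5+f$, i.e.\ codimension $1$ inside any component of $\bar W_g^f \cap \Delta_{i:g-i}$, which has dimension $2g-4+f$ by Lemma \ref{LdimW}(3).

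Let $T$ be a component of $\bar\R_g^{(f,g-2)}$ containing $I$. By the purity bound, $\dim(T) \ge 2g-4+f$. On the other hand $\dim(I) = 2g-5+f < 2g-4+f$, so $I$ is a proper closed subset of $T$, which forces $T$ to contain a point representing a cover of a \emph{smooth} curve: indeed, if $T$ were entirely contained in $\partial\R_g$ it would lie in the intersection of $\bar W_g^f$ with a boundary divisor, whose components have dimension only $2g-4+f$, and then $T$ would be such a component and $I$ would have to be dense in $T$, a contradiction; and $T$ cannot lie in $\Delta_{i:g-i}$ alone for the same reason, nor in any other boundary component by the way $I$ was constructed (the generic point of $I$ meets only $\Delta_{i:g-i}$, mirroring the last paragraph of the proof of Proposition \ref{P:NE}). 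Hence $T \cap \R_g \ne \emptyset$, so $\R_g^{(f,g-2)}$ is non-empty; since $\R_g^{(f,g-2)}$ is open and dense in $\bar\R_g^{(f,g-2)}$, its components have dimension $2g-4+f$ by the squeeze above. The ``more generally'' clause follows because $T$, being a component of $\bar W_g^f$ meeting the locus, lies over a single component $S$ of $\bar\M_g^f$ (by irreducibility of $\Pi^{-1}(S)$), and running the argument starting from an arbitrary prescribed $S$ produces the locus inside $\Pi^{-1}(S)$ of the stated dimension.

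The main obstacle I anticipate is the bookkeeping in the inductive step: one must verify that $i$ and the split $(f_1,f_2)$ can always be chosen with $i \ge 2$, $g-i \ge 2$, $0 \le f_1 \le i$, $0 \le f_2 \le g-i$ \emph{and} with the two smaller strata $\R_{i}^{(f_1,i-2)}$, $\R_{g-i}^{(f_2,g-i-1)}$ genuinely non-empty for the relevant $p$ (the $p \ge 5$ hypothesis is what makes the genus-$2$ base cases work, via Proposition \ref{prop:basecase}), and one must pin down precisely how the marked points affect dimensions through $\kappa_{i:g-i}$, $\psi_M$, $\psi_R$ (Lemma \ref{Lirreducible}). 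The other delicate point is ruling out that the component $T$ lies in the boundary — this is exactly the dimension-counting argument at the end of Proposition \ref{P:NE}, and it has to be repeated carefully here with the boundary divisors $\Delta_{i:g-i}, \Delta_i, \Delta_{g-i}, \Delta_{0,I}, \Delta_{0,II}, \Delta_{0,III}^{(a)}$ all having dimension $2g-4+f$ in $\bar W_g^f$ by Lemma \ref{LdimW}(3), so that a component of $\bar\R_g^{(f,g-2)}$ of dimension $\ge 2g-4+f$ contained in one of them would be forced to equal it, contradicting that $I$ (of dimension $2g-5+f$) is not dense in $T$ unless $T$ meets the smooth locus.
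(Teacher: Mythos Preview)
Your approach is essentially the same as the paper's: the same base cases $g=2,3$, the same degeneration to $\Delta_{i:g-i}$ with one factor carrying an almost-ordinary Prym and the other an ordinary one, and the same purity-plus-dimension-count to force the component $T$ off the boundary. The one substantive difference is the choice of $i$: you take $i=2$, the paper takes $i=3$.

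That choice matters. The paper's reason for $i=3$ is that $\M_3^{f_1}$ is irreducible for every $0\le f_1\le 3$, so by Proposition~\ref{Pirred} the preimage $\Pi^{-1}(S_1)=\bar W_3^{f_1}$ is forced to be \emph{the whole} stratum, and Proposition~\ref{P:NE} then applies to it directly. With your $i=2$, when $f=0$ you are forced into $f_1=0$, and there Proposition~\ref{Pirred} does not apply (its hypothesis excludes $g=2$, $f=0$); since $\A_2^0$ is reducible for large $p$, you cannot assume the component of $\bar W_{2;1}^0$ handed to you by Theorem~\ref{TdegenW} is one containing a point of $\R_2^{(0,0)}$. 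You would need the full ``more generally'' form of the $g=2$ base case (that \emph{every} component of $\Pi^{-1}(\M_2^0)$ meets $V_2^0$), which Proposition~\ref{prop:basecase} as written does not quite deliver. Switching to $i=3$ closes this gap with no extra work; this is exactly what the paper does, and it is the reason the paper's argument is not genuinely inductive beyond the base cases.

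Two smaller points. Your intermediate dimension formula $(f_1+(i-2))+(f_2+(g-i-1))+1-\cdots$ is garbled; the correct count is
\[
\dim\R_{i;1}^{(f_1,i-2)}+\dim W_{g-i;1}^{f_2}=(2i-4+f_1+1)+(2(g-i)-3+f_2+1)=2g-5+f,
\]
and your final value $2g-5+f$ is right. And your ``generically smooth'' argument is phrased backwards: you write ``$I$ would have to be dense in $T$, a contradiction,'' but $I$ is \emph{not} dense in $T$ in either scenario. The actual obstruction (which is what the paper uses) is that if $T$ were a full component of $\Delta_{i:g-i}[\bar W_g^f]$, its generic point would, by Lemma~\ref{LDelig-i} and Theorem~\ref{TW} applied to both factors, have Prym $p$-rank $g-1$, not $g-2$; equivalently, the locus inside $\Delta_{i:g-i}[\bar W_g^f]$ with Prym $p$-rank $g-2$ has dimension at most $2g-5+f<\dim T$.
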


\begin{proof}
The first statement follows from the second since every component of ${\mathcal R}_g^{(f,g-2)}$ is contained in 
$\Pi^{-1}(S)$ for some component $S$ of ${\mathcal M}_g^f$.
It thus suffices to prove
that
$\Pi^{-1}(S) \cap V_g^{g-2}$ is non-empty and its components have dimension $2g-4+f$.

\bigskip

{\bf Dimension:}
Let $T$ be a component of $\Pi^{-1}(S) \cap V_g^{g-2}$.  Then Proposition~\ref{Phalfway} implies that ${\rm dim}(T) \geq 2g-4+f$. 
The generic point of $\Pi^{-1}(S)$ represents a cover whose Prym has $p$-rank $f'=g-1$ by Theorem \ref{TW}
(or Proposition \ref{prop:basecase} if $g=2$ and $f=0$). 
Thus ${\rm dim}(T)={\rm dim}(\Pi^{-1}(S))-1=2g-4+f$.
It thus suffices to show $\Pi^{-1}(S) \cap V_g^{g-2}$ is non-empty.

\bigskip

{\bf Base cases:}
When $g=2$ and $0 \leq f \leq 2$, then $\Pi^{-1}(S) \cap V_2^0$ is non-empty with dimension $f$ by Proposition \ref{prop:basecase} (unless $p=3$ and $f=0,1$).
When $g =3$ and $0 \leq f \leq 3$, then $\Pi^{-1}({\mathcal M}_3^f)$ is irreducible and 
$\Pi^{-1}({\mathcal M}_3^f) \cap V_g^{1}$ is non-empty with dimension $2+f$ by Proposition \ref{P:NE}.

\bigskip

{\bf Strategy:}
Suppose $g \geq 4$. Let $\bar S$ be the closure of $S$ in $\bar {\mathcal M}_g^f$.
The plan is to show that $\Pi^{-1}(\bar S) \cap \bar V_g^{g-2}$ is non-empty and 
that one of its components is not contained in $\partial {\mathcal R}_g$.

\bigskip

{\bf Non-empty:}
Let $g_1=3$ and $g_2=g-3$.
Choose $f_1, f_2$ such that $f_1+f_2=f$ with $0 \leq f_i \leq g_i$.
By Proposition \ref{PinterDelta1}, 
there are components $S_{i;1}$ of $\bar {\mathcal M}_{g_i;1}^{f_i}$ such that
\[\kappa_{g_1:g_2}(S_{1;1} \times S_{2;1}) \subset \bar{S}.\]
Recall the forgetful map $\psi_M: {\mathcal M}_{g;1} \to {\mathcal M}_g$ from Section \ref{Smarkedcurves}.
Let $S_i = \psi_M(S_{i;1})$ which is an
irreducible component of $\bar {\mathcal M}_{g_i}^{f_i}$.

The Prym of the cover represented by the generic point of $\Pi^{-1}(S_{2})$ has $p$-rank $f'_2=g_2-1$ by Theorem \ref{TW}.
By Proposition \ref{Pirred}, $\Pi^{-1}(S_1)$ is irreducible.
By Proposition \ref{P:NE}, there exists a point of $\Pi^{-1}(S_{1})$ representing a cover whose Prym has $p$-rank $f'_1=g_1-2=1$.
Since $\bar {\mathcal M}_3^{f_1}$ is irreducible, $\Pi^{-1}(S_{1}) = \bar W_{3}^{f_1}$.
Consider
\[N:=\kappa_{g_1:g_2}(\Pi^{-1}(S_{1;1}) \times \Pi^{-1}(S_{2;1})) \subset \Pi^{-1}(\bar{S}).\]
By Lemma \ref{LDelig-i}, $N$ contains a point representing a cover whose Prym has $p$-rank $f'=f_1'+f_2'+1=g-2$,
i.e., whose Prym is almost ordinary. 
Thus $\Pi^{-1}(\bar S) \cap V_g^{g-2}$ is non-empty.

\bigskip

{\bf Generically smooth:}
Let $T$ be a component of $\Pi^{-1}(\bar S) \cap \bar V_g^{g-2}$ containing $N$.
By the remarks above, $T$ intersects the image of
\[\kappa_{g_1:g_2}: \bar {\mathcal R}_{g_1;1}^{(f_1,1)} \times \bar W_{g_2;1}^{f_2} \rightarrow \Delta_{3:g-3}[\bar {\mathcal R}_{g}^{(f,g-2)}].\] 
This image has dimension
\[(2+f_1) + 1 + (2(g-3)-3+f_2)+1=2g - 5 +f.\]
By Proposition \ref{Phalfway}, ${\rm dim}(T) \geq 2g-4 +f$.
Thus the generic point $\tau$ of $T$ is not contained in $\Delta_{3:g-3}[\bar {\mathcal R}_g]$.
Furthermore, $\tau$ is not contained in any other component of $\partial {\mathcal R}_g$
because the generic points of $S_1$ and $S_2$ represent smooth curves.
Thus $\tau \subset \Pi^{-1}(S) \cap V_g^{g-2}$.
\end{proof}

\smallskip

\subsection{Pryms with low $p$-rank}

As an application, we demonstrate the existence of smooth curves of given genus and $p$-rank having an 
unramified double cover whose Prym has any $p$-rank between $\frac{g}{2}-1$ and $g-3$.

\begin{theorem} \label{Tbigg}
Let $p \geq 5$.
Let $g \geq 2$ and write $g=3r+2s$ for integers $r,s \geq 0$.
Let $0 \leq f \leq g$.
Let $2r+s-1 \leq f' \leq g-1$.
Then $\mathcal{R}_g^{(f, f')}$ is non-empty and has a component of dimension $g-2 +f +f'$
in characteristic $p$.
\end{theorem}

\begin{proof}
In light of Proposition \ref{Pgoup}, it suffices to prove the result when $f' =2r+s-1$.
The proof is by induction on $r+s$.
In the base case $(r,s)=(0,1)$, then $g=2$ and the result is true by Proposition \ref{prop:basecase}.
In the base case $(r,s)=(1,0)$, then $g=3$ and the result is true by Proposition \ref{P:NE}.
As an inductive hypothesis, suppose that the result is true for all pairs $(r',s')$ such that $1 \leq r'+s' < r+s$.

Case 1: suppose $r \geq 1$.
Let $g_1=3$ and $g_2=g-3$.
There exist $f_1, f_2$ such that $f_1+f_2=f$ and $0 \leq f_1 \leq g_1$ and $0 \leq f_2 \leq g_2$.
Let $f_1'=1$ and $f_2'=2r+s-3$.
By Proposition~\ref{P:NE}, $\mathcal{R}_3^{(f_1,1)}$ is non-empty and has a component $S_1$ of dimension $d_1= 2+f_1$.
(The points of $S_1$ represent unramified double covers $\pi_1:Y_1 \to X_1$ of a smooth curve of genus $g_1$ 
and $p$-rank $f_1$, such that $P_{\pi_1}$ has $p$-rank $1$.)

By the inductive hypothesis applied to $(r-1, s)$, it follows that $\mathcal{R}_{g_2}^{(f_2, 2r+s-3)}$ is non-empty and 
has a component $S_2$ of dimension $d_2= g_2-2 +f_2 + f_2'$.
(The points of $S_2$ represent unramified double covers $\pi_2:Y_2 \to X_2$ of a smooth curve of genus $g_2$ 
and $p$-rank $f_2$, such that $P_{\pi_2}$ has $p$-rank $f_2'$.) 
Adding a marking increases the dimension by $1$, 
so $U_1:= \psi_R^*(S_1) = S_1 \times_{{\mathcal R}_{3}} {\mathcal R}_{3;1}$ has dimension 
$d_1+1= 3+f_1$
and
$U_2:=\psi_R^*(S_2) = S_2 \times_{{\mathcal R}_{g_2}} {\mathcal R}_{g_2;1}$ has dimension 
$d_2+1 = g_2-1+f_2+f_2'$.

Let $\mathcal{K}$ be a component of $\kappa_{3,g_2}(U_1 \times U_2)$; then $\mathcal{K}$ has dimension $d_1+d_2+2$.
By Lemmas \ref{dimig-i} and \ref{LDelig-i}, $\mathcal{K}$ is contained in a component $\mathcal{Z}$ of 
$\bar{\mathcal{R}}_{g_1+g_2}^{(f_1+f_2,f'_1+f'_2+1)}= \bar{\mathcal{R}}_{g}^{(f,2r+s-1)}$.
In other words, the points of $\mathcal{K}$ represent unramified double covers of curves (of compact type)
having genus $g$ and $p$-rank $f$ whose Prym varieties have $p$-rank $f'$.
By Lemma \ref{Lintersect}, the dimension of $\mathcal{Z}$ is at most
\[d_1+d_2 + 3= (g_2+3) - 2 + (f_1+f_2) + (2r+s-1) = g-2+f +f'.\]
Also ${\rm dim}(\mathcal{Z}) \geq g-2+f +f'$ by purity.
Thus ${\rm dim}(\mathcal{Z}) = g-2+f +f'$ and the generic point of $\mathcal{Z}$ is not contained in $\mathcal{K}$.
The generic geometric points of $S_1$ and $S_2$ represent unramified double covers of smooth curves by hypothesis.
Thus the generic geometric point of $\mathcal{Z}$ is not contained in any other boundary component of 
$\bar{\mathcal{R}}_g$
and so it represents an unramified double cover of a smooth curve.

Case 2: suppose $s \geq 1$.  Let $g_1=2$ and $g_2=g-2$.
There exist $f_1, f_2$ such that $f_1+f_2=f$ and $0 \leq f_1 \leq g_1$ and $0 \leq f_2 \leq g_2$.
Let $f_1'=0$ and $f_2'=2r+s-2$.
By Proposition \ref{prop:basecase}, $\mathcal{R}_2^{(f_1,0)}$ is non-empty and has a component $S_1$ 
of dimension $d_1= f_1$.
By the inductive hypothesis applied to $(r, s-1)$, it follows that $\mathcal{R}_{g_2}^{(f_2, 2r+s-2)}$ is non-empty and 
has a component $S_2$ of dimension $d_2= g_2-2 +f_2 + (2r+s -2)$.
The rest of the proof follows the same reasoning as in Case (1).

%As before, the image $\mathcal{K}$ of the clutching morphism
%$\kappa$ has dimension $d_1+d_2+2$.
%Furthermore, $\mathcal{K}$ is contained in a component $\mathcal{Z}$ of 
%$\bar{\mathcal{R}}_{g}^{(f_1+f_2,f'_1+f'_2+1)}= \bar{\mathcal{R}}_{g}^{(f,r+s-1)}$ 
%whose dimension is at most 
%\[d_1+d_2 + 3= (g_2+2) -2 + (f_1+f_2) + (2r+s-1) = g-2+f +f'.\]
%Also ${\rm dim}(\mathcal{Z}) \geq g-2+f +f'$ by purity.
%Thus ${\rm dim}(\mathcal{Z}) = g-2+f +f'$ and the generic point of $\mathcal{Z}$ is not contained in $\mathcal{K}$.
%The generic geometric points of $S_1$ and $S_2$ represent unramified double covers of smooth curves by hypothesis.
%Thus the generic geometric point of $\mathcal{Z}$ is not contained in any other boundary component of 
%$\bar{\mathcal{R}}_g$
%and so it represents an unramified double cover of a smooth curve.
\end{proof}

\begin{corollary} \label{Cdownmore}
Let $\ell=2$ and $p \geq 5$.  Let $g \geq 4$ and $0 \leq f \leq g$. Suppose $\frac{g}{2}-1 \leq f' \leq g-3$.
Then ${\mathcal R}_g^{(f,f')}$ is non-empty and has a component of dimension $g-2+f+f'$.
In particular, there exists a smooth curve $X/\bar{{\mathbb F}}_p$ of genus $g$ and $p$-rank $f$
having an unramified double cover $\pi:Y \to X$ for which the Prym $P_\pi$ has $p$-rank $f'$.
\end{corollary}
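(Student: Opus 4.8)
\textbf{Proof proposal for Corollary \ref{Cdownmore}.}
The plan is to induct on $g$, using the clutching map $\kappa_{i:g-i}$ to the boundary component $\Delta_{i:g-i}$ and the inductive construction of non-ordinary Pryms, in exactly the style of the proof of Theorem \ref{T:NE}. The point is that by Lemma \ref{LDelig-i}, gluing an unramified double cover with Prym of $p$-rank $f'_1$ over a genus-$i$ curve of $p$-rank $f_1$ to one with Prym of $p$-rank $f'_2$ over a genus-$(g-i)$ curve of $p$-rank $f_2$ produces a point of $\Delta_{i:g-i}$ whose Prym has $p$-rank $f'_1+f'_2+1$ and whose base curve has $p$-rank $f_1+f_2$; so if we can realize the two smaller $(f_i,f'_i)$ strata with components of the expected dimension, we get a point of $\bar\R_g^{(f,f')}$, and then a dimension count forces a component $T$ through it to meet $\R_g$ generically.

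First I would set up the base of the induction. For $f'=g-2$ (almost ordinary) the statement is Theorem \ref{T:NE}, and for $f'$ in the stated range with small $g$ (say $g=4,5$, where $\lceil g/2\rceil-1$ is small) one checks the finitely many cases directly, again using Theorem \ref{T:NE} and Proposition \ref{P:NE} as the genus $2$ and $3$ inputs together with Lemma \ref{LDelig-i}. Then for $g\ge 6$ and $\frac{g}{2}-1\le f'\le g-3$, I would choose $i=3$, pick $f_1,f_2\ge 0$ with $f_1+f_2=f$, $f_1\le 3$, $f_2\le g-3$, and set $f'_1=1$ (so the genus-$3$ Prym is almost ordinary, using Proposition \ref{P:NE}), $f'_2=f'-2$. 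One must verify $f'_2$ lies in the admissible range for genus $g-3$, i.e. $0\le f'_2\le g-4$ and, crucially, $f'_2\ge \frac{g-3}{2}-1$ so that the inductive hypothesis (or Theorem \ref{T:NE} if $f'_2=g-4$) applies; the inequality $f'-2\ge \frac{g-3}{2}-1$ follows from $f'\ge \frac{g}{2}-1$. Using Proposition \ref{PinterDelta1} to find components $S_{1;1},S_{2;1}$ of the relevant $p$-rank strata of $\bar\M_{i;1},\bar\M_{g-i;1}$ with $\kappa_{i:g-i}(S_{1;1}\times S_{2;1})\subset\bar S$, and then the analogue of the set $N$ from the proof of Theorem \ref{T:NE}, Lemma \ref{LDelig-i} gives a point of $\Pi^{-1}(\bar S)\cap\bar V_g^{f'}$.

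Next comes the dimension bookkeeping. A component $T$ of $\Pi^{-1}(\bar S)\cap\bar V_g^{f'}$ containing $N$ has $\dim(T)\ge g-2+f+f'$ by purity, Proposition \ref{Phalfway}. On the other hand $T$ meets the image of
\[
\kappa_{i:g-i}:\ \bar\R_{3;1}^{(f_1,1)}\times \bar\R_{g-3;1}^{(f_2,f'-2)}\ \longrightarrow\ \Delta_{i:g-i}[\bar\R_g^{(f,f')}],
\]
whose dimension is $(2+f_1)+1+\big((g-5)+f_2+(f'-2)\big)+1 = g-3+f+f'$ (the two $+1$'s from the choice of base points on the marked factors, using $\dim\bar\R_{3;1}^{(f_1,1)}=2+f_1+1$ from Proposition \ref{P:NE} and $\dim\bar\R_{g-3;1}^{(f_2,f'-2)}=(g-5)+f_2+(f'-2)+1$ from the inductive hypothesis). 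Since $g-3+f+f' < g-2+f+f'$, the generic point of $T$ cannot lie in $\Delta_{i:g-i}$; and because the generic points of $S_1=\psi_M(S_{1;1})$ and $S_2=\psi_M(S_{2;1})$ represent smooth curves, the generic point of $T$ lies in no other boundary component either. Hence $T\cap\R_g^{(f,f')}$ is open dense in $T$, is non-empty, and has dimension $g-2+f+f'$, which gives the claimed smooth curve $X$ and cover $\pi:Y\to X$.

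The main obstacle is making the induction close cleanly: one must ensure that the inductively chosen data $(g-3,f_2,f'-2)$ always stays inside the range where a previous case is available — in particular that $f'-2\ge \frac{g-3}{2}-1$ and that when this drops to the almost-ordinary value $g-4$ one invokes Theorem \ref{T:NE} rather than the corollary itself — and to organize the finitely many small-$g$ base cases so that every $(g,f,f')$ with $\frac{g}{2}-1\le f'\le g-3$ is reached. A secondary technical point is checking that the admissibility constraints $f_1\le i=3$, $f_2\le g-3$ on the $p$-rank splitting $f=f_1+f_2$ can always be met simultaneously with $f_1\le 3$; this is automatic since $0\le f\le g$ and $g\ge 6$. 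Everything else is a routine repetition of the arguments already carried out for Theorems \ref{TW} and \ref{T:NE}.
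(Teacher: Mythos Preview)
Your inductive scheme has a genuine gap at the boundary value $f'=\frac{g}{2}-1$ when $g$ is even. You assert that $f'-2\ge \frac{g-3}{2}-1$ follows from $f'\ge \frac{g}{2}-1$, but this is false: the first inequality is equivalent to $f'\ge \frac{g-1}{2}$, which is strictly stronger than $f'\ge \frac{g-2}{2}$. Concretely, take $g=6$ and $f'=2$; your choice $i=3$, $f'_1=1$ forces $f'_2=0$ on the genus-$3$ side, so you would need $\R_3^{(f_2,0)}$ to be non-empty. That is exactly Question \ref{Qg=3(0,0)} in the paper and is not known for $p\ge 5$. The same failure recurs for every even $g$ at the bottom of the range (e.g.\ $g=8,f'=3$ needs $\R_5^{(f_2,1)}$, which lies below the inductive range $f'_2\ge \frac{5}{2}-1$).

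The paper avoids this by clutching with a much smaller piece and taking $f'_1=0$ rather than $f'_1=1$: it uses $\Delta_{1:g-1}$ when $f'>\frac{g}{2}-1$ (the genus-$1$ Prym is trivial, so $f'_1=0$ is free) and $\Delta_{2:g-2}$ when $f'=\frac{g}{2}-1$ (using Proposition \ref{prop:basecase} for $\R_2^{(f_1,0)}$, which is where the hypothesis $p\ge 5$ enters). With $f'_1=0$ one has $f'_2=f'-1$, and then the required inequality $f'-1\ge \frac{g_2}{2}-1$ does hold in each case. Your approach via $i=3$ and Proposition \ref{P:NE} would work once $f'>\frac{g}{2}-1$, but it cannot reach the extremal values, and those are precisely the cases that explain the shape of the lower bound $\frac{g}{2}-1$ in the statement. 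A secondary point: you claim $\dim T=g-2+f+f'$ but only establish the lower bound from Proposition \ref{Phalfway}; the paper obtains the equality via Lemma \ref{Lintersect}, and you should say how you get the upper bound as well.
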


\begin{proof}
If $g$ is even, let $r=0$ and $s=g/2$.  If $g$ is odd, let $r=1$ and $s=(g-3)/2$.
In either case, the condition $\frac{g}{2}-1 \leq f' \leq g-3$ implies that the hypothesis $2r+s-1 \leq f' \leq g-1$ in Theorem \ref{Tbigg}
is satisfied and the result follows from Theorem \ref{Tbigg}.
\end{proof}

\section{Applications to Theta divisors} \label{Stheta}

\subsection{Background}
Recall the definition of the theta divisor from \cite[Section 1.1]{raynaud02}.
Given a relative curve $X/S$, 
let $X^1$ be the curve induced by base change by the absolute Frobenius of $S$.
Consider the relative Frobenius morphism $F:X \to X^1$.
The {\it sheaf of locally exact differentials} $B$ is the image of
$F_*d: F_* ({\mathcal O} _X) \to F_* (\Omega^1_X)$.
There is an exact sequence of ${\mathcal O}_{X^1}$-modules:
\[0 \to {\mathcal O}_{X^1} \to F_*({\mathcal O} _X) \stackrel{F_* d}{\to} B \to  0.\]
Also, $B$ is the kernel of the Cartier operator $C: F_*(\Omega^1_X) \to \Omega^1_{X^1}$, 
and there is an exact sequence of ${\mathcal O}_{X^1}$-modules:
\[0 \to B \to F_*(\Omega^1_X) \stackrel{C}{\to} \Omega^1_{X^1} \to 0.\]
Now $B$ is a vector bundle on $X^1$ of rank $p-1$ and slope $g-1$, 
where the slope is the quotient of the degree by the rank.
More precisely, if $X^1$ is not smooth, then $B$ is a torsion-free sheaf, which is locally free of rank $p-1$ outside the singularities of $X^1$.

By \cite[Theorem 4.1.1]{raynaudsections}, $B$ admits a theta divisor $\Theta_X$. 
This is a positive Cartier divisor on the Jacobian $J^1$ of $X^1$
(the determinant of the universal cohomology).
A point $a \in J^1(k)$ is in the support of $\Theta_X$ 
if and only if $H^0(X^1, B \otimes L_a) \not = 0$ where 
$L_a \in {\rm Pic}^0(X^1)$ is the invertible sheaf identified with $a$.

\subsection{The theta divisor} 
\label{Sprevious}

By work of Raynaud, the theta divisor $\theta_X$ determines whether
unramified covers of the curve $X$ are ordinary.
By \cite[Proposition 1]{raynaudrevetements}, $X$ is ordinary if and only if 
$\theta_X$ does not contain the identity of $J^1$.

To generalize this, consider a non-trivial point $a \in J^1[\ell]$ with $p \nmid \ell$.
The point $a$ determines an unramified ${\mathbb Z}/\ell$-cover $\pi_a: Y_a \to X$, 
and an invertible sheaf $L_a \in {\rm Pic}^0(X^1)$ of order $\ell$.
Denote the orbit of $a$ under $({\mathbb Z}/\ell)^*$ as
${\rm Sat}(a) =\{ia \mid {\rm gcd}(i,\ell)=1\}$.

\begin{proposition} \label{P1=2} \cite[Proposition 2.1.4]{raynaud02}
Let $a \in J^1[\ell]$ be non-trivial with $p \nmid \ell$.
The new part of $\pi_a:Y_a \to X$ is ordinary if and only if 
${\rm Sat}(a)$ does not intersect the theta divisor $\Theta_X$.
\end{proposition}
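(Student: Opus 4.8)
The plan is to read off ordinariness of the new part $P_{\pi_a}$ from the (semilinear) Frobenius operator on $H^1$, decompose this operator under the $\ZZ/\ell$–action, and then match, eigenspace by eigenspace, the failure of bijectivity with $\Theta_X$–membership of a point in the orbit of $a$. Concretely, first recall that for an abelian variety $A/k$ the $p$–rank equals $\dim_k \bigcap_{n\ge 1}\Phi^n\bigl(H^1(A,\mathcal O_A)\bigr)$, where $\Phi$ is the Frobenius–semilinear Hasse–Witt operator on $H^1(A,\mathcal O_A)$; in particular $A$ is ordinary if and only if $\Phi$ is bijective. Apply this with $A=J_{Y_a}$, so the relevant module is $H^1(Y_a,\mathcal O_{Y_a})$. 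Since $\gcd(\ell,p)=1$, the $\ZZ/\ell$–action is semisimple, $\pi_{a*}\mathcal O_{Y_a}\cong\bigoplus_{i=0}^{\ell-1}L_a^{\otimes(-i)}$, and the $i$–th isotypic piece of $H^1(Y_a,\mathcal O_{Y_a})$ is $H^1(X,L_a^{\otimes(-i)})$. The trivial piece ($i=0$) is $H^1(X,\mathcal O_X)$ and, because $J_{Y_a}\sim J_X\times P_{\pi_a}$, the sum of the non–trivial pieces is identified (compatibly with $\Phi$) with $H^1(P_{\pi_a},\mathcal O)$, i.e.\ with the new part. Thus $P_{\pi_a}$ is ordinary if and only if $\Phi$ is bijective on $\bigoplus_{i=1}^{\ell-1}H^1(X,L_a^{\otimes(-i)})$. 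As $\Phi$ is semilinear it permutes these pieces by $i\mapsto ip\bmod\ell$, so this is equivalent to the bijectivity, for each non–trivial $i$, of the $d$–fold composite of $\Phi$ around the Frobenius orbit of $i$ ($d$ the order of $p$ in $(\ZZ/\ell)^\times$); this composite is a generalized Hasse–Witt (Cartier) operator on the single group $H^1(X,L_a^{\otimes(-i)})$.

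Next I would dualize. By Serre duality on $X^1$ the transpose of the generalized Hasse–Witt operator is the iterated Cartier operator $C$ acting on spaces of twisted $1$–forms, and the two exact sequences defining $B$ (as the cokernel of $\mathcal O_{X^1}\hookrightarrow F_*\mathcal O_X$ and as $\ker C\subset F_*\Omega^1_X$), together with the vanishing $H^0(X^1,L)=0$ for every non–trivial $L\in\Pic^0(X^1)$, show that the composite operator attached to $i$ fails to be bijective precisely when $H^0\bigl(X^1,B\otimes L_{ja}\bigr)\neq 0$ for some $j$ in the Frobenius orbit of $i$ (here one uses the canonical identification $J_X[\ell]\cong J^1[\ell]$ and the compatibility of relative Frobenius with cyclic covers to see which multiple of $a$ occurs at each stage, the sign being irrelevant below). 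By the definition of $\Theta_X$, $H^0(X^1,B\otimes L_{ja})\neq 0$ means $ja\in\Theta_X$. Running over all non–trivial $i$ gives: $P_{\pi_a}$ is ordinary if and only if no point in the union of the Frobenius orbits of the non–trivial multiples of $a$ lies on $\Theta_X$. Finally, since $\pi_{ia}$ and $\pi_a$ define the same cover $Y_{ia}=Y_a$ for $i\in(\ZZ/\ell)^\times$, this condition is $(\ZZ/\ell)^\times$–invariant, and since $(\ZZ/\ell)^\times$ contains $\langle p\rangle$ the union of those Frobenius orbits is exactly ${\rm Sat}(a)$. Hence $P_{\pi_a}$ is ordinary if and only if ${\rm Sat}(a)\cap\Theta_X=\emptyset$.

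The main obstacle is the middle step: turning ``the generalized Hasse–Witt operator on $H^1(X,L_a^{\otimes(-i)})$ is bijective'' into ``$H^0(X^1,B\otimes L_{ja})=0$ for $j$ in the Frobenius orbit'' cleanly, which requires careful bookkeeping of Frobenius twists — passing between line bundles on $X$ and on $X^1$, and pinning down which multiple of $a$ appears at each stage of the cycle — and, if one wants the statement to apply in families and to singular $X$, of the fact that $B$ is then only a torsion–free sheaf, locally free of rank $p-1$ away from the nodes of $X^1$. The remaining ingredients — the description of the $p$–rank via the stable image of $\Phi$, the isogeny–invariance of the $p$–rank used to split off $H^1(P_{\pi_a},\mathcal O)$, and the semisimplicity of the $\ZZ/\ell$–action in characteristic prime to $\ell$ — are standard.
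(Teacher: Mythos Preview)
The paper does not supply its own proof of this proposition; it is quoted directly from Raynaud's paper \cite[Proposition 2.1.4]{raynaud02} and used as a black box. Your outline is the standard argument and is essentially Raynaud's: decompose $H^1(Y_a,\mathcal O_{Y_a})$ into $\ZZ/\ell$--isotypic pieces $H^1(X,L_a^{\otimes(-i)})$, note that the semilinear Hasse--Witt operator permutes them along the $\langle p\rangle$--orbit in $(\ZZ/\ell)^\times$, and then use the defining exact sequences for $B$ together with Serre duality to identify failure of bijectivity on a cycle with the nonvanishing of some $H^0(X^1,B\otimes L_{ja})$, i.e.\ with $ja\in\Theta_X$. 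Your final passage from Frobenius orbits to ${\rm Sat}(a)$ via the $(\ZZ/\ell)^\times$--invariance of the cover is also correct. You have honestly flagged the one genuinely delicate point (tracking which multiple of $a$ appears at each step of the cycle when passing between $X$ and $X^1$); filling that in carefully is exactly what Raynaud does, so there is no gap in strategy, only in bookkeeping.
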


%The $p$-rank of $P_\pi$ can also be computed using theory from
%\cite[page 343]{raynaud02}.
%Then $\pi_*({\mathcal O}_Y)= \oplus_{0 \leq i \leq \ell-1} L^i$ where $L$ is the invertible sheaf of order $\ell$ associated with $\eta$. 
%Since $\ell$ is prime, the new part of $\pi_*({\mathcal O}_Y)$ is 
%${\mathcal L}:= \oplus_{1 \leq i \leq \ell-1} L^i$.
%The Frobenius $F$ acts semi-linearly on $\pi_*({\mathcal O}_Y)$ and on ${\mathcal L}$ taking $L^i$ to $L^{pi}$.
%Thus $F$ acts on $H^1(Y, {\mathcal O}_Y)$ and on $H^1(X, {\mathcal L})$.

%Let $H^1_{\acute{e}t}(Y, {\mathbb Z}/p)$ 
%be the kernel of ${\text{ Id}}-F$ on  $H^1(Y, {\mathcal O}_Y)$.
%Let $H^{1, {\text{new}}}_{\acute{e}t}(Y, {\mathbb Z}/p)$ be the kernel of ${\text{Id}}-F$ on $H^1(X, {\mathcal L})$.
%Note that the dimension of $H^{1, {\text{new}}}_{\acute{e}t}(Y, {\mathbb Z}/p)$ over ${\mathbb F}_p$ is at most 
%$(\ell-1)(g-1)$ \cite[page 343]{raynaud02}. 

%As in \cite[Definition 2.1.1]{raynaud02}, 
%the new part of $\pi:Y \to X$ is {\it ordinary} if the action of $F$ on $H^1(X, {\mathcal L})$ is semi-simple.  
%This is equivalent to the condition that the dimension of $H^{1, {\rm new}}_{\acute{e}t}(Y, {\mathbb Z}/p)$ over ${\mathbb F}_p$ equals $(\ell-1)(g-1)$.
%More generally, the $p$-rank of $P_{\pi}$ is the stable rank of $F$ on $H^1(X,{\mathcal L})$, which is less than or equal to the rank of $F$.

Using the geometry of $\Theta_X$, Raynaud and Pop/Saidi prove:

\begin{theorem} \label{Traynaud}
Let $X$ be a smooth projective $k$-curve of genus $g \geq 2$.
\begin{enumerate}
\item \cite[Theorem 4.3.1]{raynaudsections}
For sufficiently large $\ell$, there is an unramified ${\mathbb Z}/\ell$-cover $\pi:Y \to X$ such that $P_\pi$ is ordinary.
It suffices to take $\ell > (p-1)3^{g-1}g!$ by \cite[Remark 3.1.1]{tamagawa}.

\item \cite[Theorem 2]{raynaudrevetements}
There is an unramified Galois cover $Z \to X$, with solvable prime-to-$p$ Galois group, 
with a non-ordinary representation (so $Z$ is not ordinary).
\item \cite[Proposition 2.3]{popsaidi}
If $X$ is non-ordinary or if $J_X$ is simple then
there is an unramified ${\mathbb Z}/\ell$-cover $\pi_\ell:Y_\ell \to X$ such that $P_{\pi_\ell}$ is not ordinary
for infinitely many primes $\ell$.
\end{enumerate}
\end{theorem}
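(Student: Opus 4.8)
The plan is to read off all three parts from Proposition \ref{P1=2} together with two properties of the theta divisor $\Theta_X$ on $J^1$: it is a nonzero effective Cartier divisor, and its class in the N\'eron--Severi group equals $(p-1)$ times the class of the principal polarization $\Theta^1_{\rm class}$. By Proposition \ref{P1=2}, for a prime $\ell\neq p$ and a nonzero $a\in J^1[\ell]$, the Prym $P_{\pi_a}$ of the associated unramified $\ZZ/\ell$-cover $\pi_a: Y_a\to X$ is ordinary exactly when the punctured line ${\rm Sat}(a)=(\FF_\ell\cdot a)\smallsetminus\{0\}$ avoids $\Theta_X$. Hence $X$ admits \emph{some} unramified $\ZZ/\ell$-cover with non-ordinary Prym if and only if $\Theta_X$ contains a nonzero $\ell$-torsion point, and each assertion becomes a question about which torsion points of $J^1$ lie on $\Theta_X$.

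For part (1) I would argue by counting lines in $J^1[\ell]\cong\FF_\ell^{2g}$ (taking $\ell$ prime). Since $\dim\Theta_X=g-1$ and $[\Theta_X]=(p-1)[\Theta^1_{\rm class}]$, a B\'ezout-type estimate on $J^1$ bounds $\#(\Theta_X\cap J^1[\ell])$ by a constant multiple of $\ell^{2g-2}$, and keeping track of the constant yields $(p-1)3^{g-1}g!$ as in \cite[Remark 3.1.1]{tamagawa}. There are $(\ell^{2g}-1)/(\ell-1)$ lines through the origin, each nonzero point of $\Theta_X\cap J^1[\ell]$ lies on exactly one of them, so for $\ell>(p-1)3^{g-1}g!$ some line is disjoint from $\Theta_X$; the corresponding cover has ordinary Prym by Proposition \ref{P1=2}.

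Parts (2) and (3) run in the opposite direction and instead require that $\Theta_X$ \emph{does} meet torsion. If $X$ is non-ordinary then part (3) (hence also part (2)) is in force, so for part (2) one may assume $X$ ordinary and build a tower of cyclic prime-to-$p$ covers; using that the class of the theta divisor grows under pullback, a rank count shows that after finitely many steps the cover becomes non-ordinary, and its Galois closure over $X$ is then solvable, prime-to-$p$, and non-ordinary, as in \cite[Theorem 2]{raynaudrevetements}. For part (3), the hypothesis that $X$ is non-ordinary or $J_X$ is simple is precisely what prevents $\Theta_X$ from being a union of torsion translates of proper abelian subvarieties that could avoid $J^1[\ell]$ for every large $\ell$; granting this, $\Theta_X$ meets $J^1[\ell]$ nontrivially for infinitely many primes $\ell$, and Proposition \ref{P1=2} converts each such $\ell$ into a $\ZZ/\ell$-cover with non-ordinary Prym, following \cite[Proposition 2.3]{popsaidi}.

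The main obstacle is this reverse direction. An effective divisor can in principle avoid all $\ell$-torsion of any prescribed order, so for parts (2) and (3) effectivity of $\Theta_X$ alone is not enough: one genuinely needs the numerical class of $\Theta_X$, its behaviour under pullback along covers, and---for part (3)---the simple-or-non-ordinary hypothesis in order to force the intersections. Part (1), where the counting is in our favour and only the class of $\Theta_X$ enters, is comparatively routine once Proposition \ref{P1=2} is available.
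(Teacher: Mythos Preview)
The paper does not prove Theorem~\ref{Traynaud}; it is stated purely as a summary of results from \cite{raynaudsections}, \cite{tamagawa}, \cite{raynaudrevetements}, and \cite{popsaidi}, with no argument supplied beyond the citations. There is thus no proof in the paper against which to compare your strategy.

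On the substance of your sketch: part~(1) is essentially the Raynaud--Tamagawa argument and is correct in outline. Part~(3), however, has its logic inverted. You argue that the hypothesis ``prevents $\Theta_X$ from being a union of torsion translates of proper abelian subvarieties,'' and that granting this, $\Theta_X\cap(J^1[\ell]\smallsetminus\{0\})\neq\emptyset$ for infinitely many~$\ell$. But torsion translates of abelian subvarieties are precisely the subvarieties that \emph{do} carry infinitely many torsion points; in characteristic~$0$ the Manin--Mumford theorem says that a subvariety containing no positive-dimensional torsion coset has only finitely many torsion points and hence \emph{avoids} $J^1[\ell]\smallsetminus\{0\}$ for all but finitely many~$\ell$. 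So the implication you state does not hold as written. What actually changes in characteristic~$p$ is that over~$\overline{\FF_p}$ every $k$-point of $J^1$ is already torsion, so $\Theta_X(k)$ is full of torsion regardless of this dichotomy; the hypotheses in \cite{popsaidi} are used for reasons other than the one you identify. Part~(2) is too schematic to assess: ``a rank count shows that after finitely many steps the cover becomes non-ordinary'' names the destination rather than the route, and Raynaud's actual construction in \cite{raynaudrevetements} is more delicate than a bare growth-of-class argument.
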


\subsection{Comparison with previous work}

The results in this paper strengthen the results in Theorem \ref{Traynaud} 
for a generic curve $X$ of genus $g$ and $p$-rank $f$ for all $g \geq 2$ and $0 \leq f \leq g$.
Specifically,
Theorem \ref{Tintro1} removes
the condition on $\ell$ in Theorem \ref{Traynaud}(1) and shows that all (not just one) of the Pryms 
of the ${\mathbb Z}/\ell$-covers of $X$ are ordinary, for a generic curve $X$ of genus $g$ and $p$-rank $f$.
Theorem \ref{Tintro2} and Corollary \ref{Cdownmore} are about double covers, 
rather than covers of unknown degree, and 
they determine the value of the $p$-rank of the Prym
which gives more information than saying that the new part of the Prym is not ordinary.

\subsection{New results on theta divisors}

We apply Proposition \ref{P1=2} in the opposite direction from Raynaud and Pop/Saidi
to complete the proofs of Theorems \ref{Tintro1}(2) and \ref{Tintro2}(2).

\begin{theorem} \label{Ttheta1}
Let $\ell \not = p$ be prime.
Let $g \geq 2$ and $0 \leq f \leq g$ with $f \not = 0$ if $g=2$.
Let $S$ be an irreducible component of ${\mathcal M}_g^f$.
If $X$ is the curve represented by the generic point of $S$, 
then the theta divisor $\Theta_X$ of the Jacobian of $X$ does not contain any point of order $\ell$.
\end{theorem}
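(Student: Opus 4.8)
The plan is to deduce this statement from Theorem~\ref{TW} together with Raynaud's criterion, Proposition~\ref{P1=2}. Recall from Section~\ref{Sprevious} that, since $\ell \neq p$, every non-trivial point $a$ of order $\ell$ on the Jacobian $J^1$ of $X^1$ corresponds to an unramified connected $\ZZ/\ell$-cover $\pi_a : Y_a \to X$ and an invertible sheaf $L_a \in \Pic^0(X^1)$ of order $\ell$; conversely every unramified $\ZZ/\ell$-cover of $X$ arises this way, so there are exactly $\ell^{2g}-1$ of each. Proposition~\ref{P1=2} says that the new part (i.e.\ the Prym) of $\pi_a$ is ordinary if and only if the orbit ${\rm Sat}(a)$ is disjoint from $\Theta_X$; in particular, if $P_{\pi_a}$ is ordinary then $a \notin \Theta_X$. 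Hence it suffices to show that, for $X$ the curve represented by the generic point $\eta_S$ of $S$, \emph{every} one of its unramified $\ZZ/\ell$-covers has ordinary Prym.

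To do this I would use that $Q := \Pi_\ell^{-1}(S)$ is irreducible (Proposition~\ref{Pirred}) and that the Prym of the cover represented by the generic point $\tilde\eta$ of $Q$ is ordinary (Theorem~\ref{TW}). Let $D \subseteq Q$ be the locus of points whose Prym is non-ordinary; by the $p$-rank stratification recalled in Section~\ref{Sppt}, the ordinary locus is open, so $D$ is closed, and $\tilde\eta \notin D$, so $D$ is a \emph{proper} closed subset of the irreducible scheme $Q$ and $\dim D < \dim Q$. Since $\Pi_\ell$ is finite, $\Pi_\ell(D)$ is closed in $S$ with $\dim \Pi_\ell(D) \le \dim D < \dim Q = \dim S$, so $\eta_S \notin \Pi_\ell(D)$; equivalently, the fibre $\Pi_\ell^{-1}(\eta_S)$ — the full set of unramified $\ZZ/\ell$-covers of $X$ — is disjoint from $D$. (Alternatively, one can argue using Theorem~\ref{Tmono}: the $\ZZ/\ell$-monodromy ${\rm Sp}_{2g}(\ZZ/\ell)$ acts transitively on the $\ell^{2g}-1$ non-zero vectors of $(\ZZ/\ell)^{2g}$, so $\Gal(\overline{k(\eta_S)}/k(\eta_S))$ permutes the $\ell^{2g}-1$ covers of $X$ transitively; since ordinariness of the Prym is a geometric condition preserved by this Galois action and holds for the cover at $\tilde\eta$, it holds for all of them.)

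Combining the two steps finishes the proof: for each non-trivial point $a$ of order $\ell$ on $J^1$, the Prym $P_{\pi_a}$ is ordinary, hence ${\rm Sat}(a) \cap \Theta_X = \emptyset$ by Proposition~\ref{P1=2}, so $a \notin \Theta_X$; taking the union over all such $a$ shows $\Theta_X$ contains no point of order $\ell$. The only delicate point is the middle step, the passage from ``the generic cover has ordinary Prym'' to ``all covers of the generic curve have ordinary Prym''; this is precisely where the irreducibility of $\Pi_\ell^{-1}(S)$ (equivalently, the maximality of the $\ZZ/\ell$-monodromy) is essential, and it is why the hypotheses $g \ge 2$ and $f \ne 0$ if $g = 2$ — the hypotheses of Theorems~\ref{TW} and~\ref{Tmono} — are needed here.
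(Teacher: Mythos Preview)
Your proof is correct and follows essentially the same approach as the paper: reduce via Raynaud's criterion (Proposition~\ref{P1=2}) to the assertion that every unramified $\ZZ/\ell$-cover of the generic curve of $S$ has ordinary Prym, which is the content of Theorem~\ref{Tintro1}(1)/Theorem~\ref{TW} together with the irreducibility of $\Pi_\ell^{-1}(S)$. The paper's own proof is a single sentence invoking this equivalence, whereas you have helpfully spelled out the passage from ``the generic point of $Q=\Pi_\ell^{-1}(S)$ has ordinary Prym'' to ``every cover in the fibre $\Pi_\ell^{-1}(\eta_S)$ has ordinary Prym'' via the dimension/closedness argument (and the monodromy alternative); this step is left implicit in the paper but is exactly where the irreducibility of $Q$ is used.
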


\begin{proof}
By Proposition \ref{P1=2}, 
this statement is equivalent to Theorem \ref{Tintro1}(1) since the Prym is the new part of $\pi:Y \to X$. 
\end{proof}

\begin{theorem} \label{Ttheta2}
Let $\ell =2$.  Let $g \geq 2$ and $0 \leq f \leq g$ (with $f \geq 2$ when $p =3$).
Let $S$ be an irreducible component of ${\mathcal M}_g^f$.
The locus of points of $S$ representing curves $X$ for which $\Theta_X$
contains a point of order $2$ is non-empty with codimension $1$ in $S$.
\end{theorem}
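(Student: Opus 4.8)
The plan is to translate the statement about theta divisors into a statement about Pryms via Proposition \ref{P1=2}, and then quote Theorem \ref{Tintro2}(1) (equivalently Theorem \ref{T:NE}). Since $\ell = 2$, the orbit ${\rm Sat}(a) = \{a\}$ consists of the single non-trivial point $a$ of order $2$ it is attached to, so Proposition \ref{P1=2} says: the Prym $P_{\pi_a}$ of the unramified double cover $\pi_a: Y_a \to X$ determined by $a$ is ordinary if and only if $a \notin \Theta_X$. Hence, for a curve $X$, the theta divisor $\Theta_X$ contains a point of order $2$ if and only if $X$ has an unramified double cover whose Prym is \emph{not} ordinary. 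Under the forgetful map $\Pi : \R_g \to \M_g$, this says precisely that the locus in $S$ of curves $X$ with a point of order $2$ in $\Theta_X$ is the image $\Pi\bigl(\Pi^{-1}(S) \setminus V_g^{g-1}\bigr)$, i.e.\ the image of the locus of covers whose Prym is non-ordinary.

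Next I would identify that image with the image of $\R_g^{(f, g-2)} \cap \Pi^{-1}(S)$ together with the lower strata $\R_g^{(f,f')} \cap \Pi^{-1}(S)$ for $f' < g-2$. By Theorem \ref{T:NE}, the almost-ordinary locus $\Pi^{-1}(S) \cap V_g^{g-2}$ is non-empty of dimension $2g-4+f$, which is codimension $1$ in $\Pi^{-1}(S)$ (whose dimension is $2g-3+f$ by Lemma \ref{LdimW}, using that the generic Prym over $S$ is ordinary by Theorem \ref{TW}, or Proposition \ref{prop:basecase} in the case $g=2$, $f=0$). Since $\Pi$ is finite, the image of this locus in $S$ again has dimension $2g-4+f$, hence codimension $1$ in $S$ (which has dimension $2g-3+f$ by \cite[Theorem 2.3]{FVdG}). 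The lower $p$-rank strata $\R_g^{(f,f')}\cap\Pi^{-1}(S)$ for $f' \leq g-3$ have dimension at most $2g-4+f$ as well — indeed their closures are contained in the closure of $V_g^{g-2}$ — so adding them in does not increase the codimension. Therefore the locus in $S$ of curves with a $2$-torsion point in $\Theta_X$ is non-empty of codimension exactly $1$.

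The main obstacle is purely bookkeeping: one must be careful that ``codimension $1$'' is claimed for the whole locus, not just one component, so one has to check that no component of the non-ordinary locus in $\Pi^{-1}(S)$ has dimension larger than $2g-4+f$. This follows because $V_g^{g-1} = \Pi^{-1}(S) \setminus \bigcup_{f' \leq g-2} V_g^{f'}$ is open and dense in $\Pi^{-1}(S)$ by Theorem \ref{TW}, so its complement is a proper closed subset and every component has dimension $\leq 2g-4+f$; combined with the lower bound $2g-4+f$ from Theorem \ref{T:NE} for components meeting $V_g^{g-2}$, and the fact that the strata with $f' \leq g-3$ lie in the closure of $V_g^{g-2}$, every component of the non-ordinary locus has dimension exactly $2g-4+f$. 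Finiteness of $\Pi$ then transports this to $S$ without change of dimension, and the identification with $\{X : \Theta_X \ni \text{a point of order } 2\}$ via Proposition \ref{P1=2} finishes the proof.

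\begin{proof}
Since $\ell = 2$, for a non-trivial point $a \in J^1_X[2]$ the orbit ${\rm Sat}(a)$ is $\{a\}$.  By Proposition \ref{P1=2}, the Prym $P_{\pi_a}$ of the unramified double cover $\pi_a : Y_a \to X$ associated to $a$ is ordinary if and only if $a \notin \Theta_X$.  Hence $\Theta_X$ contains a point of order $2$ if and only if $X$ admits an unramified double cover whose Prym is not ordinary.  Under the finite morphism $\Pi : \R_g \to \M_g$, the locus of such curves in $S$ is therefore the image of the non-ordinary locus $\Pi^{-1}(S) \setminus V_g^{g-1}$.

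By Theorem \ref{TW} (and Proposition \ref{prop:basecase} when $g=2$ and $f=0$), the generic point of $\Pi^{-1}(S)$ represents a cover with ordinary Prym, so $V_g^{g-1} \cap \Pi^{-1}(S)$ is open and dense in $\Pi^{-1}(S)$, and $\dim \Pi^{-1}(S) = 2g-3+f$ by Lemma \ref{LdimW}.  Thus every component of the non-ordinary locus $\Pi^{-1}(S) \setminus V_g^{g-1}$ has dimension at most $2g-4+f$.  Conversely, by Theorem \ref{T:NE} the almost-ordinary stratum $\Pi^{-1}(S) \cap V_g^{g-2}$ is non-empty of dimension exactly $2g-4+f$.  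The strata $\R_g^{(f,f')} \cap \Pi^{-1}(S)$ for $f' \leq g-3$ lie in the closure of $V_g^{g-2}$, so they contribute nothing of larger dimension.  Hence the non-ordinary locus is non-empty of pure dimension $2g-4+f$, i.e.\ codimension $1$ in $\Pi^{-1}(S)$.

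Since $\Pi$ is finite, its image in $S$ is non-empty of dimension $2g-4+f$.  As $\dim S = 2g-3+f$ by \cite[Theorem 2.3]{FVdG}, this locus has codimension $1$ in $S$, as claimed.
\end{proof}
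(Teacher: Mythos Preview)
Your proof is correct and takes the same approach as the paper, whose own proof is the single line ``the result follows from item (1) in Theorem \ref{Tintro2} by Proposition \ref{P1=2}''; you have simply spelled out the details of that deduction. One minor point: the assertion that the strata with $f'\leq g-3$ lie in the closure of $V_g^{g-2}$ (and hence that the non-ordinary locus has \emph{pure} dimension $2g-4+f$) is not proved in the paper and is not needed --- you already showed that every component of the non-ordinary locus in $\Pi^{-1}(S)$ has dimension at most $2g-4+f$ (as a proper closed subset of a space each of whose components has dimension $2g-3+f$), and Theorem \ref{T:NE} provides a component of exactly that dimension, which suffices for the codimension-$1$ conclusion after pushing forward by the finite map $\Pi$.
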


\begin{proof} 
This follows from Theorem \ref{Tintro2}(1) by Proposition \ref{P1=2}. 
\end{proof}

\begin{remark} \label{RnoLtorsion}
Let $g$ be odd and $d=(g-1)/2$.  If $X$ has genus $g$, then $J_X$ contains
the top difference variety $V_d = X_d - X_d$, 
which consists of divisors of the form $\sum_{i=1}^d P_i - \sum_{i=1}^d Q_i$.
%Then ${\rm dim}(V_d)=g-1$. 
By \cite[Corollary~0.4]{F_generalL}, $V_d$ contains no points of order $\ell$ when $X$ is generic.
\end{remark}

\section{Examples and open questions} \label{Sopen}

This section contains examples for $g=2$, a question about Pryms of hyperelliptic curves,
and questions about non-ordinary Pryms whose answers would generalize Theorem \ref{Tintro2}(1).

\subsection{The fiber product construction when $g=2$} \label{Sexample}

We explain why the fiber product construction of unramified double covers
is not useful for proving Proposition \ref{prop:basecase}. 

Suppose $X$ is a genus $2$ curve and $f_1: X \to {\mathbb P}^1$ is a 
hyperelliptic cover branched above a set $B_X$ of cardinality $6$.  
For a set $B_E \subset B_X$ of cardinality $4$, let $f_2:E \to {\mathbb P}^1$ be the hyperelliptic cover
branched above $B_E$.  
The fiber product $f:Y \to {\mathbb P}^1$ of $f_1$ and $f_2$ is a Klein four cover of ${\mathbb P}^1$.
By Abhyankar's Lemma, the degree two subcover $\pi:Y \to X$ is unramified since $B_E \subset B_X$.
Then $J_Y \sim J_X \oplus E$ by \cite[Theorem B]{kanirosen}.  Thus 
$P_\pi \sim E$.

Furthermore, each of the 15 connected unramified double covers $\pi:Y \to X$ arises via the fiber product construction (from one of the 15 
choices of $B_E \subset B_X$).  This is because the hyperelliptic involution $\iota$ on $X$
fixes each point of order $2$ on $J_X$ and thus extends to $Y$. 

For $\lambda \in k - \{0,1\}$, let $E_\lambda: y_2^2=x(x-1)(x-\lambda)$. 
For distinct $t_1, t_2 \in k - \{0,1,\lambda\}$, consider the genus two curve
\begin{equation} \label{Eg2hypcons}
X:y_1^2=f_\lambda(t_1, t_2) := x(x-1)(x-\lambda)(x-t_1)(x-t_2).
\end{equation}
As above, $E_\lambda \sim P_\pi$ for an unramified double cover $\pi:Y \to X$. 
One says that $\lambda$ is supersingular when $E_\lambda$ is supersingular.

Let $M_\lambda(t_1,t_2)$ be the matrix of the Cartier operator on $H^0(X, \Omega^1)$
with respect to the basis $\{dx/y, xdx/y\}$.  Let $c_i$ be the coefficient of $x^i$ in 
$f_\lambda(t_1,t_2)^{(p-1)/2}$.
By \cite[page 381]{Yui}, 
\[M_\lambda(t_1,t_2)=\left (\begin{array}{cc}
c_{p-1} & c_{p-2} \\
c_{2p-1} & c_{2p-2}
\end{array}
\right ).\]

Let $D_\lambda={\rm det}(M_\lambda(t_1,t_2))$
and let $S_\lambda \subset {\mathbb A}^2$ be the vanishing locus of $D_\lambda$.
By \cite[Theorem 2.2]{Yui}, $X$ is ordinary if and only if $D_\lambda \not = 0$; the $p$-rank of $X$ is 
the rank of $N_\lambda(t_1,t_2)=M_\lambda(t_1,t_2)^{(p)} M_\lambda(t_1,t_2)$ 
(where $(p)$ means to raise each entry of the matrix to the $p$th power).

\begin{enumerate}
\item The case $(f,f')=(2,0)$.
For each supersingular $\lambda$, to show $X$ is generically ordinary, one 
needs to check that
$D_\lambda \in k[t_1,t_2]$ is non-zero.

\item The case $(f,f')=(\leq 1, 0)$.
To show ${\mathcal R}_2^{(1,0)} \cup {\mathcal R}_2^{(0,0)} \not = \emptyset$, one needs to find $\lambda$ supersingular such that 
$D_\lambda \in k[t_1,t_2]$ is non-constant and $S_\lambda$ is not contained in the union $L$ of the lines 
$t_i=0$, $t_i=1$, $t_i=\lambda$, and $t_1=t_2$.

\item The case $(f,f')=(0,0)$ for $p \geq 5$.
To show ${\rm dim}({\mathcal R}_2^{(0,0)})=0$, one needs to show that $N_\lambda(t_1,t_2)$
has rank $1$ (not $0$) for every supersingular $\lambda$ and for each generic point of $S_\lambda$ not in $L$.
To show ${\mathcal R}_2^{(0,0)} \not = \emptyset$, one needs to find $\lambda$ supersingular 
and distinct $t_1, t_2 \in k-\{0,1, \lambda\}$ such that $N_\lambda(t_1,t_2)$ has rank $0$.
\end{enumerate}

\begin{example}
(Example of Proposition \ref{prop:basecase}) Let $p=5$.
%Then $D_\lambda=c_4c_8-c_3c_9$ where 
%
%$c_3=3\lambda^2t_1^2t_2^2 + 3\lambda^2t_1^2t_2 + 3\lambda^2t_1t_2^2 + 3\lambda t_1^2t_2^2$
%
%$c_4=\lambda^2t_1^2t_2^2 + 4\lambda^2t_1^2t_2 + \lambda^2t_1^2 + 4\lambda^2t_1t_2^2 + 
 %   4\lambda^2 t_1t_2 + \lambda^2 t_2^2 + 4\lambda t_1^2t_2^2 + 4\lambda t_1^2t_2 + 
  %  4\lambda t_1t_2^2 + t_1^2t_2^2$
  %   
%$c_8=\lambda^2 + 4\lambda t_1 + 4\lambda t_2 + 4 \lambda + t_1^2 + 4t_1t_2 + 4t_1 + t_2^2 + 4t_2 + 1$
%
%$c_9=3\lambda + 3t_1 + 3t_2 + 3$.
%
Let $\lambda=a^4 $ for a root $a$ of $x^2 + 4x + 2$. Then $E_\lambda$ is supersingular and 
\[D_\lambda=(t_1 + 4t_2)^2(t_1^2t_2 + t_1t_2^2 + a^{17}t_1^2  + a^{17}t_2^2 + a^5t_1t_2 + a^4t_1  + a^4t_2).\]

\begin{enumerate}
\item $(f,f')=(2,0)$.  Since $D_\lambda \neq 0$, $X$ is generically ordinary.
\item $(f,f')=(1, 0)$. 
When $(t_1,t_2)=(a^{16}, a)$, then $f_X=1$. 
\item $(f,f')=(0,0)$.  By \cite[Section 7.2]{FVdG}, there is exactly one unramified 
double cover
$\pi: Y \to X$ up to isomorphism such that $X$ has genus $2$ and $Y$ has $p$-rank $0$.  An equation for $X$ is 
$y^2=x(x^4+x^3+2x+3)$.
\end{enumerate}
\end{example}

\subsection{The hyperelliptic case} \label{Shyperelliptic}

We expect there is an analogue of Theorem \ref{TW} for the hyperelliptic locus ${\mathcal H}_g$.
One can ask if the Prym of the cover represented by the 
generic point of each irreducible component of $\Pi_\ell^{-1}({\mathcal H}^f_g)$
is ordinary for $1 \leq f \leq g$.
Propositions~\ref{PinterDelta1} and \ref{TdegenW} are true (for $i=1$) for ${\mathcal H}_g^f$ 
\cite[Corollary 3.13]{APprh}
and Theorem~\ref{Tmono} is true for ${\mathcal H}_g^f$ when $f >0$ 
(or for $f=0$ and $\ell >> 0$) \cite[Theorems 5.2, 5.7]{APprh}.
However, there may be complications with Propositions \ref{Pdivisor1}, \ref{Pdivisor2} for ${\mathcal H}_g^f$,
especially when $\ell=2$.  

\subsection{A question for $g=3$ about Pryms of $p$-rank $0$}

\begin{question} \label{Qg=3(0,0)}
For a prime $p$, is ${\mathcal R}_3^{(0,0)}$ non-empty?
Does there exist an unramified double cover $\pi:Y \to X$ of a smooth curve $X/\bar{{\mathbb F}}_p$ of 
genus $3$ such that $Y$ has $p$-rank $0$?
\end{question}

The answer to Question \ref{Qg=3(0,0)} is yes when $p=3$ by \cite[Example~5.5]{FVdG} but is unknown for $p \geq 5$.
By \cite[Proposition 4.2]{FVdG}, if it is non-empty, then ${\rm dim}({\mathcal R}_3^{(0,0)})=1$; 
however, there are components of
$\partial \bar{R}_3^{(0,0)}$ which have dimension $1$ or $2$.

\subsection{Non-ordinary Pryms for odd degree cyclic covers}

It is unknown whether Theorem \ref{T:NE} can be generalized to the case $\ell \geq 3$, for a given prime $p$.

\begin{question} \label{Q2}
Suppose $\ell \not = p$ is an odd prime.  For which $(g,f)$ does there exist a curve $X$ of genus $g$ and $p$-rank $f$
with an unramified ${\mathbb Z}/\ell$-cover $\pi:Y \to X$ such that $P_\pi$ is non-ordinary?
\end{question}

\begin{example} \cite[Section 6]{nakajima}
Let $p=2$ and $g=2$ and $\ell=3$.  
If $X$ is a curve of genus $2$ which is not ordinary ($f < 2$), then the Prym of every unramified ${\mathbb Z}/3$-cover of $X$ is ordinary.
\end{example}

\subsection{A question about purity}

Let $\ell=2$ and $p \geq 3$.
The points of ${\mathcal R}_g^{(f,f')}=W_g^f \cap V_g^{f'}$ represent unramified double covers $\pi:Y \to X$ 
such that $X$ is a smooth curve of genus $g$ and $p$-rank $f$ and $P_\pi$ has $p$-rank $f'$.
By Proposition \ref{Phalfway},
${\rm dim}({\mathcal R}_{g;2}^{(f,f')}) \geq g-2 + f+f'$.

\begin{question} \label{MainConj}
Let $g \geq 2$ and $0 \leq f \leq g$ and $0 \leq f' \leq g-1$.
If ${\mathcal R}_{g; 2}^{(f,f')}$ is non-empty, do all its components have dimension exactly $g-2+f+f'$?
\end{question}

The answer to Question \ref{MainConj} is yes for any $0 \leq f \leq g$ when:
\begin{enumerate}
\item $f'=g-1$ by Theorem \ref{TW} (or Proposition \ref{prop:basecase});
%when $(g,f)=(2,0)$;

\item and $f'=g-2$ (with $f \geq 2$ when $p=3$) by Theorem \ref{T:NE}.
\end{enumerate}

One complication in answering Question \ref{MainConj} for $f' < g-2$ is that there are families of {\it singular} curves in
$\bar{{\mathcal R}}_{g; 2}^{(f,f')}$ whose dimension exceeds $g-2+f+f'$ as in Remark \ref{Rsingular}.

\subsection{Pryms with $p$-rank zero}

Let $\ell=2$ and $p \geq 5$ and $g=3$.  
Consider $\bar{V}_3^1=Pr_3^{-1}(\tilde{{\mathcal A}}_2^1)$ whose points represent unramified double
covers $\pi:Y \to X$, where $X$ has genus $3$ and $P_\pi$ has $p$-rank $1$.
By \cite[Theorem 4.2, equations 3.14-3.16]{Verra}, $V_3^1$ has one component of dimension 5, 
and three exceptional components of lower dimension.

In $\bar{V}_3^1$ is the locus $V_3^0$ (additional constraint that $f'=0$).
By \cite[Theorem 4.2]{Verra}, ${\rm dim}(V_3^0)=3 + {\rm dim}({\mathcal A}_2^0)=4$.  
If $p > 11$, then ${\mathcal A}_2^0$ is not irreducible by \cite[Theorem 5.8]{katsuraoort}, 
and so $V_3^0$ is not irreducible.
Also in $\bar{V}_3^1$ is the locus $W_3^2 \cap \bar{V}_3^{1}$, 
(additional constraint that $f=2$).
By Proposition \ref{P:NE}, ${\rm dim}(W_3^2 \cap \bar{V}_3^{1})=4$;
it is not known whether it is irreducible.

\begin{question}
Is ${\rm dim}(W_3^2 \cap V_3^0) = 3$?
\end{question}

\bibliographystyle{plain}
\bibliography{prym.bib}

\end{document}